\renewcommand{\Delta}{\triangle}
\newcommand{\Cloc}{\widetilde{C}}
\definecolor{darkblue}{rgb}{0,0,0.7}
\definecolor{darkgreen}{rgb}{0.01,0.75,0.24}
\def \Ee[#1]{\mathcal{E}^{\text{{#1}}}}
\def\pa[#1,#2]{\frac{\partial {#1}}{\partial {#2}} }
\def\idom[#1,#2,#3]{\int_{#1}\hspace{1pt} {#2} \hspace{1pt} \text{d}{#3}}
\def\res[#1,#2]{\left.{#1}\right|_{#2}}
\def\var[#1,#2]{\langle \delta \mathcal{E}^{\text{{#1}}}({#2}),v\rangle}
\def\vars[#1,#2,#3]{\langle \delta^2\mathcal{E}^{\text{{#1}}}({#2})v,{#3}\rangle}
\def\vard[#1,#2,#3,#4]{\langle \delta\mathcal{E}^{\text{{#1}}}({#2})-\delta\mathcal{E}^{\text{{#3}}}({#4}),v\rangle}
\def\E{\mathbb{E}}
\newcommand{\ubar}{ \bar{u}}
\newcommand{\reals}{\mathcal{R}}
\newcommand{\dist}{\mathrm{d}}
\newcommand{\ytilde}{\tilde{y}}
\newcommand{\Gtilde}{\widetilde{G}}
\newcommand{\Gbar}{\overline{G}}
\newcommand{\etatilde}{\tilde{\eta}}
\def\ba#1\ena{\begin{align}#1\end{align}}
\def\ban#1\enan{\begin{align*}#1\end{align*}}
\theoremstyle{plain}
\newtheorem{thm}{Theorem}[section]
\newtheorem{lem}[thm]{Lemma}
\newtheorem{aspt}[thm]{Assumption}
\numberwithin{equation}{section}
\begin{document}

\title[Localization in  Ensemble Kalman Inversion]{Localization in  Ensemble Kalman Inversion}

\author[X. T. Tong] {Xin T. Tong}
\address{Department of Mathematics, National University of Singapore, 119077, Singapore}
\email{mattxin@nus.edu.sg}

\author[ M. Morzfeld] {Matthias Morzfeld}
\address{Institute of Geophysics and Planetary Physics, Scripps Institution of Oceanography, University of California, San Diego}
\email{matti@ucsd.edu}


\begin{abstract}
{Ensemble Kalman inversion (EKI) is a technique for the numerical solution of inverse problems. 
A great advantage of the EKI's ensemble approach is that derivatives are not required in its implementation. 
But theoretically speaking, EKI's ensemble size needs to surpass the dimension of the problem.
This is because of EKI's ``subspace property'',
i.e., that the EKI solution is a linear combination of the initial ensemble it starts off with.
We show that the ensemble can break out of this initial subspace when ``localization'' is applied.
In essence, localization enforces an assumed correlation structure onto the problem,
and is heavily used in ensemble Kalman filtering and data assimilation.
We describe and analyze how to apply localization to the EKI,
and how localization helps the EKI ensemble break out of the initial subspace.
Specifically, we show that the localized EKI (LEKI) ensemble will collapse to a single point (as intended)
and that  the LEKI ensemble mean will converge to the global optimum at a sublinear rate.
Under strict assumptions on the localization procedure and observation process,
we further show that the data misfit decays uniformly.
We illustrate our ideas and theoretical developments with numerical examples with simplified toy problems, 
a Lorenz model, and an inversion of electromagnetic data,
where some of our mathematical assumptions may only be approximately valid.}
\end{abstract}

\maketitle
%

\section{introduction}
\label{sec:intro}

Many problems in science and engineering require parameter estimation of a mathematical model from data, see e.g. \cite{Parker94, ABT13, Tarantola05}. 
Such an inverse problem is typically based on the equation
\begin{equation}
\label{eq:inv}
y=G(u) + \eta,\quad \eta\sim \mathcal{N}(0,I_{d}),
\end{equation}
where $y \in \reals^{d_y}$ are the data, $u\in \reals^{d_u}$ are the unknown model parameters,
 $G:\reals^{d_u}\mapsto \reals^{d_y}$ is the model (often a discretization of a differential equation),
 and where the random variable $\eta$ represents observation errors between model outputs and data.
 Throughout, we assume that these observation errors are Gaussian.
 The overall goal of solving an inverse problem is to find $u$, given $G$ and $y$,
 along with statistical assumptions about the errors $\eta$.
 
Ensemble Kalman inversion (EKI) is a computational strategy for 
solving inverse problems \eqref{eq:inv} \cite{iglesias2013ensemble, schillings2018convergence}.
Its formulation is inspired by the ensemble Kalman filter (EnKF) \cite{E94}, which is an ensemble-based algorithm originally designed for high dimensional data assimilation problems (see also, e.g., \cite{sr:evensen03, sr:evensen04, jdw:Evensen2006}).
The EKI works by iteratively updating an ensemble 
of candidate solutions $\{u^{j}(n)\in \reals^{d_u}\}_{j=1}^J$ from iteration index $n$ to $n+1$
(see Section \ref{sec:EKI} for details).
Very often, the EKI ensemble collapses to a single point after several iterations, and this point can be seen as a minimizer of 
the loss function
\begin{equation}
\label{eq:loss}
l(u)=\|G(u)-y\|^2.
\end{equation}
We note that EKI is an optimization algorithm that does not require derivatives
(as opposed to gradient descent, Newton, Gauss-Newton or Quasi-Newton methods).
There are many recent works that carefully describe the mathematics of EKI \cite{schillings2018convergence, ding2021ensemble}, 
that suggest improvements \cite{chada2018parameterizations, chada2019convergence, chada2019incorporation, iglesias2021adaptive},
and that explain how EKI can be used in machine learning \cite{kovachki2019ensemble}.
An extension of EKI, such that the EKI ensemble is distributed according to a Bayesian posterior distribution, is discussed in \cite{ding2021sample} and is called the ensemble Kalman sampler.
The EKI can further be extended to include a Tikhonov  regularization term in the loss function,
which avoids overfitting the data \cite{chada2020tikhonov};
this version of the EKI is called TEKI (with ``T'' standing for Tikhonov regularization).
Algorithms similar to EKI can also be derived as an ensemble randomized maximum likelihood solver \cite{gu2007iterative, raanes2019revising},
and  can be applied to history matching problems,
where they are better known as iterative ensemble Kalman smoothers \cite{chen2010cross, chen2012ensemble, bocquet2014iterative}.

One fundamental issue of EKI, which is the main point we address in this paper,
is its \emph{subspace property} \cite{iglesias2013ensemble, schillings2018convergence}. 
Put simply, the subspace property implies that all ensemble members  $\{u^{j}(n)\in \reals^{d_u}\}_{j=1}^J$ 
are confined to the linear subspace $S_0$ spanned by the initial ensemble $\{u^{j}(0)\in \reals^{d_u}\}_{j=1}^J$. 
In practice, this means that the initial subspace needs to be rich enough to contain the global, or at least a ``useful,''
minimizer of the loss function \eqref{eq:loss}. 
One way to achieve this is to use an ensemble size $J\geq d_u+1$ so
that the dimension of $S_0$ is equal to $d_u$.
Such a linear scaling of required ensemble size with number of unknown parameters
is too expensive for large-scale problems.
The rest of this paper describes 
one computational strategy that can overcome this issue,
thus turning EKI into a possibly efficient numerical technique for solving large-scale inverse problems
in which derivatives are hard to come by.

Our theory is inspired by the literature on ensemble data assimilation (DA)
where similar issues occur.
Ensemble DA, in particular EnKF \cite{E94} or ensemble-variational schemes \cite{Lorenc2003},
are routinely used in operational numerical weather prediction to estimate millions of unknowns,
but with an ensemble size of a few hundred. 
This incredible efficiency (small ensemble size) is achieved by a technique termed \emph{localization}
\cite{HoutekamerMitchell1998, Houtekamer2001, Hamill2001, Ott_etal2004}.
In essence, localization enforces on the sample covariance from an ensemble the assumption 
that the covariance between two locations will be small for sufficiently large separations.
Localization is typically implemented by artificially reducing, or truncating, correlations
within the ensemble that are deemed ``spurious'' (based on above assumptions about an expected, spatial decay of correlations). 
Localization, however, also boosts the rank of the ensemble covariance
and is, more generally, a technique for reducing sampling error \cite{A07}. 
The latter is particularly inspiring for us, because it can help breaking EKI's subspace property. 

The remainder of this paper describes the mathematics of leveraging ideas akin to localization in EKI.
Indeed, localization in ensemble DA and in inverse methods similar to EKI \cite{bocquet2014iterative, chen2017localization} is a heuristic process 
with nearly no mathematical justification.
This paper intends to bridge this gap between practice and theory.
Specifically, we present mathematical analyses for the following key issues:
\begin{enumerate}[(i)]
\item
a localized EKI (LEKI) ensemble collapses to a single point at a specified rate;
\item 
a LEKI ensemble collapses onto the global minimizer of the loss function uniformly over all components and at a sublinear rate.
\end{enumerate}
Such results are important for the applicability of EKI to large scale inverse problems,
because the LEKI ensemble breaks out of its initial subspace,
thus doing away with linear scaling requirements of the ensemble size with dimension.
In passing, we note that our results equally apply to TEKI (Tikhonov regularized EKI),
because TEKI can be implemented by simply extending the observations and unknown parameters, as shown in \cite{chada2020tikhonov}.
We briefly discuss how to achieve this in Section \ref{sec:TEKI} and demonstrate our theory for TEKI at a few  numerical examples in Section \ref{sec:num}.

The remainder of this paper is organized as follows. 
In Section \ref{sec:back}, we review the background of EKI and its Tikhonov regulariation.
In Section \ref{sec:locsch}, we introduce two localization schemes and set up the mathematical problems.
We study the ensemble collapse question in Section \ref{sec:EKI}. Theorems \ref{thm:max} and \ref{thm:min} show that $\Cloc^{uu}(t)$ converges to zero like $O(\frac1t)$ for both its maximum and minimum eigenvalues. Section \ref{sec:opt} explains when LEKI converges to the optimal solution. In particular, Theorem \ref{thm:globalloss} provides sublinear convergence in the loss function $l$ and Theorem \ref{thm:locopt} gives uniform sublinear convergence for each individual local misfit $l_i(u)$. Section \ref{sec:num} demonstrates the effectiveness of LEKI on toy linear and nonlinear problems, the Lorenz' 96 problem and the inversion of DC resistivity data. 

\section{Background of  EKI}
\label{sec:back}
\subsection{Ensemble Kalman Inversion and its continuous-time limit}
\label{sec:EKI}
Ensemble Kalman inversion (EKI) relies on an ensemble of candidate solutions $\{u^{j}(n)\in \reals^{d_u}\}_{j=1}^J$.  We use $j$ to index the ensemble member, $n$ to index the iteration number, and $J$ for the ensemble size. 
The EKI ensemble is updated in an iterative process as follows.
One first computes the sample averages
\[
\bar{u}(n) = \frac{1}{J}\sum^{J}_{j=1}u^{j}(n), \quad \Gbar(n) = \frac{1}{J}\sum^{J}_{j=1}G(u^{j}(n)),
\]
and the sample (cross) covariance matrices
\begin{subequations}
\begin{align*}
C^{uu}(n) & = \frac{1}{J-1}\sum^{J}_{j=1} \bigl(u^{j}(n) - \ubar(n)\bigr)\otimes 
 \bigl(u^{j}(n) - \ubar(n)\bigr),\\ 
C^{up}(n) &= \frac{1}{J-1}\sum^{J}_{j=1} \bigl(u^{j}(n) - \ubar(n)\bigr)\otimes \bigl(G(u^{j}(n)) - \Gbar(n)\bigr),\\ 
C^{pp}(n) & = \frac{1}{J-1}\sum^{J}_{j=1}  \bigl(G(u^{j}(n)) - \Gbar(n)\bigr)\otimes  \bigl(G(u^{j}(n)) - \Gbar(n)\bigr),
\end{align*}
\end{subequations}
for the current iteration step ($n$).
Here and below, we use $a\otimes b$ to denote $a b^\top$,
where the superscript $\top$ is a transpose.
The ``classical'' EKI \cite{iglesias2013ensemble, schillings2018convergence} 
update of the ensemble is then given by:
\begin{equation}
\label{eq:clEKI}
u^{j}(n+1) = u^{j}(n) + C^{up}(n) \big(C^{pp}(n) +  I\big)^{-1}\big(y - G(u^{j}(n))\big).
\end{equation}
The update mechanism reveals the \emph{subspace property} of EKI.
We note from \eqref{eq:clEKI} that if $v$ is a vector perpendicular to all $u^{j}(n)$
($v^\top u^{j}(n)=0$ for all $j$), then it remains perpendicular
to the ensemble at the next and every iteration. 
It follows that the EKI ensemble,
at every stage of the iteration, 
is confined to the subspace spanned by the initial ensemble. 

From a theoretical Kalman filtering perspective \cite{kalman1960new}, \eqref{eq:clEKI} also lacks the controllability so the ensemble may collapse too fast. 
One standard method to avoid collapse and improve controllability is additive inflation
\cite{Anderson1999}. 
Here, we consider inflation via a set of vectors $\xi^{j}(n)$, $j=1,\dots,J$, such that
$\sum_{j=1}^J \xi^{j}(n)=0,$ and
\[
\frac1{J-1}\sum_{j=1}^J\left(\xi^{j}(n)\otimes (u^j(n)-\ubar(n))+(u^j(n)-\ubar(n))\otimes \xi^{j}(n)\right)=\Sigma(n),
\]
where $\Sigma$ is a matrix whose diagonal elements are equal to $1$. 
One can generate this set of vectors by applying a component-wise whitening transformation to the ensemble and let
\[
	\xi^j(n)=\frac12 D(n)^{-1}(u^j(n)-\ubar(n)),
\]
where $D(n)$ is the diagonal part of $C^{uu}(n)$.
Adding these perturbations to the standard EKI gives the inflated EKI update
\begin{equation}
\label{eq:disupdate}
u^{j}(n+1) = u^{j}(n) + C^{up}(n) \big(C^{pp}(n) +  I\big)^{-1}\big(y - G(u^{j}(n))\big)+\lambda_n \xi^j(n),
\end{equation}
where $\lambda_n$ controls the strength of the inflation. 
It is easy to see that $u^{j}(n)$ is no longer confined to the subspace $S_0$, 
but rather the ensemble is in a larger subspace that includes the perturbation vectors.
Additive inflation in itself, however, may not lead to a subspace rich enough to
contain the global minimizer or even a ``useful'' solution if $d_u$ is large.

Following \cite{schillings2017analysis}, we focus on the continuous-time limit of the EKI update \eqref{eq:disupdate}, because this will simplify our analysis.
The continuous-time limit involves first replacing $n$ with $nh$ and rescaling the adjustment terms
in the update equation:
\[
u^{j}((n+1)h)- u^{j}(nh) = C^{up}(nh) \big(C^{pp}(nh) + h^{-1} I\big)^{-1}\big(y - G(u^{j}(nh))\big)+h\lambda_{nh}\xi^j.
\]
Dividing both sides by $h$ and taking the $h\to 0$, one obtains 
the EKI continuous-time limit in form of an ordinary differential equation (ODE):
\begin{equation}
\label{eq:ctEKI}
\frac{d}{dt}u^{j}(t)=-C^{up}(t) (G(u^{j}(t))-y)+\lambda_t \xi^{j}(t).  
\end{equation}
The rigorous proof showing that $\eqref{eq:ctEKI}$ is the  $h\to 0$ limit  can be found in \cite{blomker2018strongly, blomker2021continuous}.
Here, $\xi^j(t)$ are such that $\sum_j\xi^j(t)=0$ and 
\[
\frac1{J-1}\sum_{j=1}^J\left(\xi^{j}(t)\otimes (u^j(t)-\ubar(t))+(u^j(t)-\ubar(t))\otimes \xi^{j}(t)\right)=\Sigma(t)
\]
for some matrix $\Sigma(t)$ with diagonal terms being $1$. 
And throughout this paper, we set $\lambda_t=\sigma/(t+1)^2$, 
where $\sigma\geq 0$. 
The reasons for this choice of $\lambda_t$ will be explained at the end of Section \ref{sec:obs}. 
Note that one obtains the standard continuous time limit of EKI for $\sigma=0$
(no inflation). 

Before we move on, it can be useful to pause and provide an intuitive explanation for why EKI may not converge  to the correct solution if the ensemble size $J$ is small (less than $d_u$). Consider a simple case where $G(u)=u$, because $\sum_{j=1}^J \xi^j(t)=0$,  the ensemble mean of \eqref{eq:ctEKI} follows 
\begin{equation}
\label{eq:ctEKImean}
\frac{d}{dt}\bar{u}(t)=-C^{uu}(t) (\bar{u}(t)-y).
\end{equation}
If $\ubar(t)$ converges to a fixed  point $u^*$, then $C^{uu}(t)(u^*-y)=0$. When the ensemble size is large and $C^{uu}(t)$ is full rank, then we can conclude that $u^*=y$, which is the optimal solution. But with a limited ensemble size, $C^{uu}(t)$ is of low rank, and hence there is no guarantee that $u^*=y$.

\subsection{Tikhonov regularized ensemble Kalman inversion (TEKI)}
\label{sec:TEKI}
The classical formulation of EKI does not account for regularization,
which can lead to overfitting  \cite{engl1996regularization, hanke1997regularizing}. 
Here we consider a Tikhonov regularized loss function 
\begin{equation}
\label{eq:rloss}
l_{\text{Tik}} (u)=\|G(u)-y\|^2+\|u\|^2_{C_0^{-1}},
\end{equation}
where we use the shorthand notation $\|u\|^2_A:=u^\top Au$ (similar to a Mahalanobis norm)
and where $C_0$ is a positive definite matrix.
From the Bayesian perspective, Tikhonov regularization is equivalent to choosing a Gaussian prior $u\sim \mathcal{N}(0, C_0)$, and the minimum of \eqref{eq:rloss} is the maximum a posteriori estimator \cite{stuart2010inverse}. 

One can incorporate a Tikhonov regularization into EKI (TEKI)  by extending the data and parameter vectors, 
as first documented in \cite{chada2020tikhonov}.
Specifically, we define the extended observations, model and observation errors as
\[
\ytilde=\begin{bmatrix} 0\\ y\end{bmatrix}\quad \Gtilde(u)=\begin{bmatrix} C_0^{-1/2}u\\ G(u) \end{bmatrix},\quad \etatilde=\begin{bmatrix} -C_0^{-1/2}u\\\eta\end{bmatrix}.
\]
With these extensions, we obtain an extended loss function $\tilde{l}$ of the form \eqref{eq:loss},
which is in fact equal to the Tikhonov regularized loss function \eqref{eq:rloss}:
\[
\tilde{l}(u)=\|\Gtilde(u)-\ytilde\|^2=\|G(u)-y\|^2+\|u\|^2_{C^{-1}_0}=l_{\text{Tik}}(u).
\]
Thus, upon the above transformation\slash extension, 
EKI and TEKI are equivalent and we will focus our discussion and analysis on the EKI.
Some numerical experiments will demonstrate how our analysis equally applies to TEKI. 
As a final remark, one can also consider integrating regularization through the randomized maximal likelihood approach
\cite{raanes2019revising}.

\subsection{Notation}
To facilitate our discussion, we adopt the following notations. 
Given a matrix $A$, we use $A_{i,j}$ or $[A]_{i,j}$ to denote its $i,j$-th entry.
With two symmetric matrices $A$ and $B$, we write $A\preceq B$ if $B-A$ is positive semidefinite.
We write the Schur product as $[A\circ B]_{i,j}=A_{i,j}B_{i,j}$. The well known Schur product theorem indicates that if $A\preceq B$, then for any other positive semi-definite matrix $C$, $A\circ C\preceq B\circ C$.

With a real symmetric matrix $C$, we use $\|C\|$ to denote the $l_2$ operator norm
(the largest eigenvalue of $C$). We use $\lambda_{\min}(C)$ to denote the smallest eigenvalue of $C$.
 If $C$ is positive semidefinite, we define the maximum entry as 
\[
\|C\|_{\max}=\max_{i,j} |C_{i,j}|=\max_{i,i} C_{i,i}
\]
where the second identity comes from the positive  semidefiniteness of $C$. We also define the following norm 
\[
\|C\|_{1}=\max_{i}\sum_{j=1}^{d_u}|C_{i,j}|
\]
which is also the $l_\infty$ operator norm. See Lemma \ref{lem:norm} for some relationships among these norms. 

Our analyses will be focusing on LEKI's performance when $t$ and $d_u$ are large. 
In this context, it is convenient to treat other independent parameters as constants. 
This leads to the standard big $O$ and ``$\lesssim$" notation.  In particular, we say $f(t, d_u,d_y)$ is $O(g(t,d_u,d_y))$ or $f(t, d_u,d_y)\lesssim g(t,d_u,d_y)$, if there is a constant $C$ independent of $t,d_u, d_y$ such that 
\[
f(t, d_u,d_y)\leq C g(t,d_u,d_y).
\]
This also implies all other constants in the assumptions are assumed to be independent of $t,d_u$ and $d_y$. 

\section{Localization of the EKI}
\label{sec:locsch}
We will now show how ideas akin to localization in ensemble DA
can be used within the EKI and, in particular, how localization can enrich the ensemble subspace.
Localization has its roots in data assimilation problems with an inherently spatial interpretation (hence the name). More broadly, however, localization is an effective means for reducing sampling error that arises due to a small ensemble size~\cite{A07}.

\subsection{Localization in ensemble DA}
\label{sec:EnKFLoc}
Briefly, localization in ensemble DA for spatial problems is as follows.
Each model component, $u_i$, is associated with a spatial location,
and two model components $u_i$ and $u_j$ are separated by a distance $\dist(i,j)$.
It is \emph{assumed} that the covariance between $u_i$ and $u_j$ decays with distance.
Localization then amounts to enforcing this covariance structure onto the 
ensemble covariance $C^{uu}$.
Localization is often implemented via Schur products:
\begin{equation}
\label{eqn:uuloc}
\Cloc^{uu}_{i,j}=C^{uu}_{i,j}\Psi_{i,j},\quad \Psi_{i,j}=\psi(\dist(i,j)/R_l).
\end{equation}
Here, $R_l$ is a decorrelation length scale and $\psi$ is the localization function,
e.g., the Gaspari Cohn function which tapers to zero \cite{Gaspari1999}.
The localized ensemble covariance $\Cloc^{uu}$ then
 replaces $C^{uu}$ in the ensemble DA.
By the Schur product theorem,
a suitably chosen localization increases the rank of $C^{uu}$ to be larger than the ensemble size.
For this reason, the ensemble after an update \eqref{eqn:uuloc} 
is \emph{not} a linear combination of the ensemble at time $n$.

\subsection{Localization schemes for EKI}
To implement localization within an EKI, 
we need to define a localization of the \emph{cross} covariance $C^{up}$ -- 
the covariance between model parameters and model outputs.
This is a non-trivial problem and existing works are mostly heuristic guidelines
(see, e.g., \cite{chen2010cross, chen2017localization}). 
Below, we translate such guidelines into precise mathematical formulas. 

\subsubsection{Linear and linearized  localization}
We start with a localization scheme that relies on two assumptions:
\begin{enumerate}[(i)]
\item the model is linear, i.e., $G(u)=Hu$ for some matrix $H$
\item the parameter-parameter covariance $C^{uu}$ can be localized as explained in section
\ref{sec:EnKFLoc}.
\end{enumerate}
While the assumptions are restrictive, they represent a good starting point for a more general theory (see below).
Moreover, assumption (ii) is often easy to satisfy in practice where 
correlation structure of the parameters is known, 
e.g., because one can rely on smoothness assumptions or is aware of inherent spatial scales.

Due to the (assumed) linearity of the model, the cross covariance is $C^{up}=C^{uu}H^\top$.
Since we know how to localize $C^{uu}$,
a natural way to localize the cross covariance $C^{up}$ is to set
\begin{equation}
\label{eq:linloc1}
\Cloc^{up}=\Cloc^{uu} H^\top,
\end{equation}
where $\Cloc^{uu}$ is the localized parameter-parameter covariance.
Indeed, this localization scheme is often used in the EnKF literature
(see, e.g., \cite{houtekamer2001sequential,houtekamer2005atmospheric}),
or used to create guidelines for more sophisticated localization schemes.
By the Schur theorem, localization increases the rank of the matrices
$C^{uu}$ and $C^{up}$.
Thus, the EKI update \eqref{eq:disupdate},
and its continuous-time limit \eqref{eq:ctEKI},
no longer generates a linear combination of the ensemble at time $n$.
Due to localization, the EKI ensemble can break out of the subspace
spanned by the initial ensemble.

Relaxing the assumption of a linear model,
the above scheme can be applied to nonlinear problems by (approximately) linearizing the model.
Specifically, suppose $H(t)$ is an approximation of the Jacobian $\nabla G(\ubar(t))$,
obtained, e.g., via an ensemble based sensitivity analysis or adjoint model \cite{emerick2011combining, bocquet2014iterative}.
Then a nonlinear, but linearized localization scheme is
\begin{equation}
\label{eq:linloc}
\Cloc^{up}(t)=\Cloc^{uu}(t) H(t)^\top.
\end{equation}
Admittedly, finding the Jacobian is computationally difficult and 
defeats the purpose of using LEKI as a derivative free algorithm.
It is of this reason, our subsequent analysis does not require the exact Jacobian, 
but rather an approximation (See Assumption \ref{aspt:uloc} for details). On the other hand, 
the accuracy of LEKI depends on the accuracy of the approximate Jacobian.

%
\subsubsection{Centralized localization}
In many applications, observations are made at a single location (local observations).
Mathematically, this means $G_j(u)=G_j(u_{i(j)})$ for some $i(j)\in \{1,\ldots, d_u\}$. 
Then, naturally, the distance between the $j$-th observation and the $i$-th model component is given by $\dist(i,i(j))$,
 and we can use a typical localization function, such as Gaspari-Cohn,
 to modify the cross covariance \cite{chen2010cross, luo2020automatic}: 
\begin{equation}
\label{eq:cenloc}
\Cloc^{up}_{i,j}=C^{up}_{i,j} \Psi_{i,i(j)}. 
\end{equation}
In more general settings,  $G_j$ is approximately ``local" if it concerns only  state variables near $i(j)$,
that is
\[
G_j(u)=G_j(u_{I_j}),\quad I_j=\{i: d(i,i(j))<l\},
\]
for some $l$ as the radius of the neighborhood. 
In particular, the $i$-th component of the Jacobian $\nabla G_{i,j}(u)$ is zero if $i\notin I_j$,
so that the Jacobian $\nabla G$ is a sparse matrix (which is nearly always the case in practice).
We call observations of this type \emph{centralized}
and we can apply the localization rule as above to centralized observations. 
The accuracy of this localization depends on the degree of centralization of $I_j$ around $i(j)$. 

Recent studies also suggest localization schemes of the form $\Cloc^{up}_{i,j}=C^{up}_{i,j} \Phi_{i,j}$ 
may achieve better recovery if $\Phi_{i,j}$ is obtained through proper correlation testing \cite{luo2018correlation,luo2019correlation,luo2020automatic}. 
Our analyses below in principle apply to such localization schemes,
but verifying the assumptions is challenging. 

\subsubsection{Linearized and centralized observations} 
Observations of both types (linearized and centralized) can also be handled easily
by applying the above localization strategies separately to each observation type.
For example, if  $G(u)=[L(u), A(u) ]$, $L(u)=[G_{1}(u_{j_1}),\ldots, G_l(u_{j_l})]$,  $A(u)\approx Hu$ for some $H\in \reals^{d\times (p-l)}$ can be approximated by linear observation, and then 
\[
\Cloc^{up}_{i,j}=\begin{cases}
C^{up}_{i,j} \Psi_{i,j},\quad j\leq l\\
[\Cloc^{uu} H^\top]_{i,j-l},\quad j>l,
\end{cases}
\]
can be used as the localized cross-covariance matrix.

\subsection{Localized EKI and main objectives}
In summary, we implement the above cross covariance localization schemes
to obtain the localized EKI (LEKI):
\begin{equation}
\label{eq:ctlEKI}
\frac{d}{dt}u^{j}(t)=-\Cloc^{up}(t) (G(u^{j}(t))-y)+\lambda_t \xi^{j},
\end{equation}
where $\Cloc^{up}(t)$ is the \emph{localized} ensemble cross-covariance.
This means that localization is in principle straightforward:
simply replace the ensemble covariance by a localized ensemble covariance.

The advantage of LEKI is that the localization increases the rank of $C^{up}$,
so that the ensemble subspace is enriched, 
breaking out of the subspace spanned by the initial ensemble.
To see this, revisit the simple case where $G(u)=u$. The ensemble mean of \eqref{eq:ctlEKI} the localized EKI  follows 
\[
\frac{d}{dt}\bar{u}(t)=-\Cloc^{uu}(t) (\bar{u}(t)-y).
\]
If $\ubar(t)$ converges to a fixed  point $u^*$, then $\Cloc^{uu}(t)(u^*-y)=0$. Due to the localization, $\Cloc^{uu}$ can be of full rank even if the ensemble size is small. This indicates that $u^*=y$, which is the correct solution.

The enrichment of  subspace
means that the ensemble size need not scale with the dimension,
which is critical for practical application of LEKI.
The localization, however, must be done carefully, or else the LEKI looses
the important properties of the EKI.

This paper represents a first step towards understanding how localization 
enables EKI to function in practice,
i.e., with nonlinear models and with a small ensemble size.
Our work is mathematically rigorous and complements the work of practitioners,
who study the effects of localization in the context of specific scientific problems.
As an aside, we hope to spark more interest in localization within the applied mathematical community,
because we demonstrate that localization is a mathematically sound idea for breaking the subspace property,
hence making ensemble-based methods feasible in large-scale problems.
In this context, it is noteworthy that ensemble-based algorithms,
that are not localized, are no longer acceptable within the field of numerical weather prediction \cite{HWAS09}.

The rest of this paper addresses the following three important questions:
\begin{enumerate}[(i)]
\item Will the LEKI ensemble, $\{u^j(t)\}_{j\leq J}$, collapses to a single point? If yes, how fast does it collapse?
\item Will the LEKI ensemble mean, $\ubar^j(t)$, 
converge to the global minimizer of the loss function $l(u)$ if 
the ensemble size is less than the number of unknown parameters ($J\ll d_u$)? 
If yes, what is the rate of convergence?
\item Does the error concerning the $j$-th observation, $l_j(\ubar(t))=|G_j(u)-y_j|^2$, decay uniformly over all $i$? 
If yes, what is the rate of convergence?
\end{enumerate}
Rigorous convergence analysis of EKI addressing questions (i) and  (ii) is known to be difficult. 
Existing work typically assumes that $G$ is linear \cite{schillings2017analysis,schillings2018convergence,chada2020tikhonov},
and\slash or assumes the mean field limit (infinite ensemble size,  $J\to \infty$) \cite{ding2021ensemble}. 
For example, \cite{chada2019convergence} addresses nonlinear problems,
but assumes that the ensemble size is large ($J\geq d_u$). 
Such results are important and necessary  first steps,
but are ultimately of minor practical relevance,
because nearly all relevant models are nonlinear and a large ensemble size is impractical.
Question (iii) is important to understand 
a ``local'' error $l_j(u)=|G_j(u)-y_j|^2$ (note that $l(u) = \sum l_j(u)$). 
Understanding uniform convergence is practically relevant,
but to the best of our knowledge, questions of this type
have been studied only in the context of EnKF, and under restrictive conditions \cite{de2020analysis}.

\section{Ensemble Collapse of LEKI}
\label{sec:collapse}
One practical indication that a LEKI 
has converged onto a solution of the inverse problems
is that the ensemble collapses. 
Thus, a natural first step of our analysis is to show that the LEKI ensemble \eqref{eq:ctlEKI} collapses.

For this purpose, we first note that ensemble average follows the stochastic ordinary differential equation (ODE):
\[
\frac{d}{dt}\ubar(t)=\frac1J\sum_{j=1}^J \frac{d}{dt}u^j(t)=- \Cloc^{up}(t)(\Gbar(t)-y),
\]
where $\Gbar(t)=\frac{1}{J}\sum_{j=1}^J G(u^j(t))$. The ensemble deviation $v^j(t)=u^j(t)-\ubar(t)$ follows the ODE
\[
\frac{d}{dt}v^j(t)=- \Cloc^{up}(t)(G(u^j(t))-\Gbar(t))+\lambda_t \xi^j(t).
\]
So the parameter-parameter covariance $C^{uu}=\frac{1}{J-1}\sum_{j} v^j(t)\otimes v^j(t)$ satisfies the ODE
\begin{align}
\notag
\frac{d}{dt} C^{uu}(t)&=\frac{1}{J-1}\sum_{j=1}^J \left(\Cloc^{up}(t)(G(u^j(t))-\Gbar(t))\otimes v^j(t)+\lambda_t \xi^j(t)\otimes v^j(t)\right)\\
\notag
&\quad+\frac{1}{J-1}\sum_{j=1}^J \left(v^j(t)\otimes \Cloc^{up}(t)(G(u^j(t))-\Gbar(t)) +\lambda_t v^j(t)\otimes \xi^j(t)\right)\\
\label{eqn:CuuODE}
&=-\Cloc^{up}(t) C^{pu}(t)-C^{up}(t)\Cloc^{pu}(t)+\lambda_t\Sigma(t).
\end{align}
Note that $C^{uu}$ is the ensemble covariance of the \emph{localized} ensemble because
we use the localized cross-covariance $\Cloc^{up}$.
Thus, when $C^{uu}$ goes to zero,
the localized ensemble collapses.

Note that the localization increases the rank of the covariance and cross-covariance matrices
and, for that reason, LEKI explores and collapses within a subspace
that is richer than the EKI subspace, spanned by the initial ensemble.
Proving the collapse of LEKI in the enriched subspace is non-trivial
and our proof requires two steps.
We first show in Theorem \ref{thm:max},
that $\max_iC^{uu}_{i,i}(t)$ decays at least like $1/t$ (under observability conditions).
We then prove Theorem \ref{thm:min},
which shows that $\min_iC^{uu}_{ii}(t)$ decays at most like  $1/t$ (under regularity conditions).
Combining both results, we conclude that the maximum and minimum  eigenvalues of $\Cloc^{uu}$ decay like $1/t$,
and, hence, the LEKI ensemble collapses and with appropriate localization  the ensemble covariance is enriched to be full-rank.
 

\subsection{Observability and covariance upper bound}
\label{sec:obs}
In the classical Kalman filter theory, posterior covariance upper bounds can usually be established when the system is observable \cite{kalman1960new}.
We follow these ideas and assume that all parameters are observable,
because, otherwise, the LEKI ensemble may not collapse in unobserved directions.
Based on \eqref{eqn:CuuODE}, we formalize observability by the following assumption.

\begin{aspt}[Observability]
\label{aspt:obs}
The following holds with a constant $c_o>0$ for all $t\geq 0$:
\[
[\Cloc^{up}C^{pu}]_{i^*,i^*}=\sum_{j=1}^{d_y} \Cloc^{up}_{i^*,j}C^{up}_{i^*,j}\geq c_o (C^{uu}_{i^*,i^*})^2,
\]
where $i^*\in \{1,\ldots, d\}$ is the index  such that $C^{uu}_{i^*,i^*}=\| C^{uu}\|_{\max}$.
\end{aspt}
Note when the observation map $G$ is linear, 
a full rank $H$ implies observability (with or without localization).

Next we provide sufficient conditions for  the two localization schemes in Section \ref{sec:locsch}
to satisfy Assumption \ref{aspt:obs}.
\begin{lem}
\label{lem:obsCentral}
With the centralized localization scheme, suppose for each $i\in \{1,\ldots,d_y\}$, there is a  local observation $j$ such that $i(j)=i$ and $G_j(u)$ depends mostly on $u_i$, that is for some $l_1\geq 0$
\[
\partial_{u_i}G_{j}-\sum_{k\neq i}|\partial_{u_k}G_{j}|\geq l_1. 
\] 
Then Assumption \ref{aspt:obs}  holds with $c_o=l_1$. 
\end{lem}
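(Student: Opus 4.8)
The plan is to reduce the sum defining $[\Cloc^{up}C^{pu}]_{i^*,i^*}$ to a single favorable term and then bound that term below by $C^{uu}_{i^*,i^*}$. By the centralized localization rule \eqref{eq:cenloc} we have $\Cloc^{up}_{i^*,j}C^{up}_{i^*,j}=(C^{up}_{i^*,j})^2\,\Psi_{i^*,i(j)}$, and since the taper $\psi$ is nonnegative, every summand is nonnegative. Hence I may discard all summands except the index $j^*$ furnished by the hypothesis for $i=i^*$, namely the local observation with $i(j^*)=i^*$. For that index $\dist(i^*,i(j^*))=\dist(i^*,i^*)=0$, so $\Psi_{i^*,i^*}=\psi(0)=1$ (the normalization shared by Gaspari--Cohn and the usual tapers), giving
\[
[\Cloc^{up}C^{pu}]_{i^*,i^*}\ \ge\ (C^{up}_{i^*,j^*})^2 .
\]
It therefore suffices to establish the scalar estimate $C^{up}_{i^*,j^*}\ge l_1\,C^{uu}_{i^*,i^*}$.

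For the scalar estimate I would pass to ensemble deviations $v^m=u^m-\ubar$. Using $\sum_m v^m_{i^*}=0$ and the standard pair-difference identity,
\[
C^{up}_{i^*,j^*}=\frac{1}{2J(J-1)}\sum_{m,m'}\bigl(u^m_{i^*}-u^{m'}_{i^*}\bigr)\bigl(G_{j^*}(u^m)-G_{j^*}(u^{m'})\bigr),
\]
while $C^{uu}_{i^*,i^*}=\frac{1}{2J(J-1)}\sum_{m,m'}(u^m_{i^*}-u^{m'}_{i^*})^2$. In the affine case $G=Hu$ this is clean and exact: $C^{up}_{i^*,j^*}=\sum_k H_{j^*,k}\,C^{uu}_{i^*,k}$, and splitting off $k=i^*$ and using $|C^{uu}_{i^*,k}|\le\sqrt{C^{uu}_{i^*,i^*}C^{uu}_{k,k}}\le C^{uu}_{i^*,i^*}$ (the last step because $i^*$ attains $\|C^{uu}\|_{\max}$, so $C^{uu}_{k,k}\le C^{uu}_{i^*,i^*}$) yields
\[
C^{up}_{i^*,j^*}\ \ge\ \Bigl(H_{j^*,i^*}-\sum_{k\ne i^*}|H_{j^*,k}|\Bigr)C^{uu}_{i^*,i^*}\ \ge\ l_1\,C^{uu}_{i^*,i^*},
\]
which is exactly the gradient-domination hypothesis applied with $\partial_{u_k}G_{j^*}=H_{j^*,k}$. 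Combining the two steps gives $[\Cloc^{up}C^{pu}]_{i^*,i^*}\ge l_1^2(C^{uu}_{i^*,i^*})^2$, i.e.\ Assumption \ref{aspt:obs} (the displayed constant being a power of $l_1$, with the linear estimate $C^{up}_{i^*,j^*}\ge l_1 C^{uu}_{i^*,i^*}$ as the real content).

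For genuinely nonlinear $G_{j^*}$ I would replace the algebraic identity by the fundamental theorem of calculus along each segment, writing $G_{j^*}(u^m)-G_{j^*}(u^{m'})=\int_0^1\sum_k\partial_{u_k}G_{j^*}\bigl(u^{m'}+s(u^m-u^{m'})\bigr)(u^m_k-u^{m'}_k)\,ds$. The diagonal $k=i^*$ contribution again produces at least $l_1$ times a sum of squares of $(u^m_{i^*}-u^{m'}_{i^*})$, and the off-diagonal terms are controlled by $\sum_{k\ne i^*}|\partial_{u_k}G_{j^*}|\,|u^m_{i^*}-u^{m'}_{i^*}|\,|u^m_k-u^{m'}_k|$, which after summation I would bound by Cauchy--Schwarz, $\sum_{m,m'}|u^m_{i^*}-u^{m'}_{i^*}|\,|u^m_k-u^{m'}_k|\le 2J(J-1)\sqrt{C^{uu}_{i^*,i^*}C^{uu}_{k,k}}\le 2J(J-1)\,C^{uu}_{i^*,i^*}$, absorbing each cross term into the diagonal one via the same max-diagonal property.

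The hard part is precisely this last bookkeeping in the nonlinear case: the weights $\partial_{u_k}G_{j^*}$ vary along each integration path and differ across ensemble pairs, so they cannot be factored out of the double sum the way the constant entries $H_{j^*,k}$ can, and the pointwise integrand is \emph{not} sign-definite (diagonal dominance alone does not force the increment of $G_{j^*}$ to exceed $l_1(u^m_{i^*}-u^{m'}_{i^*})$ pair by pair). To close the estimate cleanly I would impose that the domination bound holds uniformly on the convex hull of the ensemble, which lets me upper-bound every weight and then apply the Cauchy--Schwarz/max-diagonal step uniformly; alternatively one reduces to the affine computation through the Jacobian-approximation framework of Assumption \ref{aspt:uloc}. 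The management of these varying off-diagonal weights, rather than any single inequality, is where the care is needed.
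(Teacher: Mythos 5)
Your proposal follows the paper's own route exactly: reduce the sum in Assumption \ref{aspt:obs} to the single observation $j^*$ centered at $i^*$ (every summand is nonnegative under the centralized scheme \eqref{eq:cenloc}, and $\Psi_{i^*,i^*}=\psi(0)=1$), then prove the scalar bound $C^{up}_{i^*,j^*}\geq l_1 C^{uu}_{i^*,i^*}$ and square it. Your affine-case argument is complete and correct, and your side remark about the constant is also right: squaring gives $c_o=l_1^2$, not the stated $c_o=l_1$.

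The ``hard part'' you flag in the nonlinear case is not a shortcoming of your write-up; it is a genuine gap in the paper's own proof. The paper performs the same fundamental-theorem-of-calculus expansion you describe and then absorbs each cross term \emph{pair by pair}, i.e.\ it uses
\[
\left(\int_0^1 \partial_{u_k} G_{j^*}(u^{m,n}_s)\,ds\right)(u^m_k-u^n_k)(u^m_{i^*}-u^n_{i^*})\;\geq\;-\left(\int_0^1\left|\partial_{u_k}G_{j^*}(u^{m,n}_s)\right|ds\right)(u^m_{i^*}-u^n_{i^*})^2,
\]
which requires $|u^m_k-u^n_k|\leq|u^m_{i^*}-u^n_{i^*}|$ for \emph{every} pair $(m,n)$; the choice of $i^*$ as the maximal diagonal entry of $C^{uu}$ only gives this in aggregate, $\sum_{m,n}(u^m_k-u^n_k)^2\leq\sum_{m,n}(u^m_{i^*}-u^n_{i^*})^2$ --- precisely the sign/ordering obstruction you identify. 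Moreover, the pointwise hypothesis as literally stated cannot close the argument by any route: take $d_u=2$, $G_{j^*}(u)=l_1u_1+\rho(u_1+u_2)$ with $\rho$ smooth, nondecreasing, equal to $0$ for $s\leq 0$ and $1$ for $s\geq 0.1$, so that $\partial_{u_1}G_{j^*}-|\partial_{u_2}G_{j^*}|\equiv l_1$; an ensemble with mass $0.45$ on each of $(1,-1)$ and $(-1,1.1)$ and mass $0.05$ on each of $(\pm 1,0)$ has $C^{uu}_{11}=1>C^{uu}_{22}$, yet $\mathrm{Cov}\bigl(u_1,\rho(u_1+u_2)\bigr)=-0.4$, so $C^{up}_{i^*,j^*}=l_1-0.4<l_1C^{uu}_{i^*,i^*}$, and with tight localization ($\Psi=I$, plus a second observation $G_2(u)=u_2$ centered at $2$) Assumption \ref{aspt:obs} itself fails for, e.g., $l_1=1$. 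So a strengthening of the hypothesis is genuinely necessary, and your proposed repair is the right one: if the domination holds \emph{with constants} over the convex hull of the ensemble, i.e.\ $\inf\partial_{u_{i^*}}G_{j^*}-\sum_{k\neq i^*}\sup|\partial_{u_k}G_{j^*}|\geq l_1$, the weights factor out of the double sum and your aggregate Cauchy--Schwarz/max-diagonal step closes the bound; the strictly local case $G_{j^*}(u)=G_{j^*}(u_{i^*})$, where the cross terms vanish identically, also goes through.
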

\begin{proof}
Let $i^*$ be the index so that $[C^{uu}]_{i^*,i^*}\geq [C^{uu}]_{j,k}$ for all $j$ and $k$. 
With the centralized localization scheme, $ \Cloc^{up}_{{i^*},j}=C^{uu}_{{i^*},j}\Psi_{i,i(j)}$, so $ \Cloc^{up}_{{i^*},j}C^{up}_{{i^*},j}\geq 0$, and, therefore,
\begin{align}
\label{eq:Lemma42}
\sum_{j=1}^{d_y} \Cloc^{up}_{{i^*},j}C^{up}_{{i^*},j}\geq (C^{up}_{{i^*},j})^2.
\end{align}
Meanwhile, if $i(j)={i^*}$, let $u^{m,n}_s=u^n+s(u^m-u^n)$ for $s\in [0,1]$, 
\begin{align*}
C^{up}_{{i^*},j}&=\frac{1}{J(J-1)}\sum_{m,n} (G_j(u^m)-G_j(u^n))(u^m_{i^*}-u^n_{i^*})\\
&=\frac{1}{J(J-1)}\sum_{m,n} \sum_k\left(\int^1_0 \partial_kG_j(u^{m,n}_s)ds\right)(u^m_k-u^n_k)(u^m_{i^*}-u^n_{i^*})\\
&\geq \frac{1}{J(J-1)}\sum_{m,n} \left(\int^1_0\left( \partial_{i^*}G_j(u^{m,n}_s)-\sum_{k\neq {i^*}}|\partial_kG_j(u^{m,n}_s)|\right)ds\right)(u^m_{i^*}-u^n_{i^*})^2\\
&\geq \frac{1}{J(J-1)}\sum_{m,n} l_1|u^m_{i^*}-u^n_{i^*}|^2\geq l_1 C^{uu}_{{i^*},{i^*}}. 
\end{align*}
Squaring the last inequality and combining with \eqref{eq:Lemma42}
proves the Lemma.
\end{proof}

Note that the assumption is easier to satisfy when the ensemble size ($J$) is large,
but we do not require that $J$ is large, or larger than the overall dimension.

Assumption \ref{aspt:obs} for the 
linearized localization scheme requires additional conditions:
(i) each observation is a spatial shift of another one; 
and (ii) both $H^\top H$ and the localization matrix decay quickly in the off-diagonal direction. 
More formally, we have the following Lemma.
\begin{lem}
\label{lem:obsLinear}
With the linearized localization scheme, suppose the linearization matrix $H$ and localization matrix $\Psi$
are such that 
\begin{enumerate}
\item All diagonal terms of $H^\top H$ takes the same value $H_0$.
\item There is an $0\leq h_0<1$ so that $\sum_{j\neq i}[H^\top H]_{j,i}<h_0 [H^\top H]_{i,i}$ for all $i$.
\item There is an $0\leq \psi_0<1$ so that $\Psi_{i,i}=1,\sum_{j\neq i}\Psi_{i,j}<\psi_0$ for all $i$.
\end{enumerate}
Then Assumption \ref{aspt:obs}  holds with $c_o=H_0 (1- (h_0+\psi_0(1+h_0)))$. 
\end{lem}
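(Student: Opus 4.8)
The plan is to reduce Assumption~\ref{aspt:obs} to a quadratic-form estimate in which a single diagonal term dominates all the rest. Under the linearized scheme we have $C^{up}=C^{uu}H^\top$ and $\Cloc^{up}=\Cloc^{uu}H^\top$ with $\Cloc^{uu}_{i,k}=C^{uu}_{i,k}\Psi_{i,k}$, so I would first expand the target quantity using $\sum_j H_{j,k}H_{j,l}=[H^\top H]_{k,l}$:
\[
[\Cloc^{up}C^{pu}]_{i^*,i^*}=\sum_{j=1}^{d_y}\Cloc^{up}_{i^*,j}C^{up}_{i^*,j}=\sum_{k,l}C^{uu}_{i^*,k}\,\Psi_{i^*,k}\,C^{uu}_{i^*,l}\,[H^\top H]_{k,l}.
\]
Writing $a_k:=C^{uu}_{i^*,k}$, the single index choice $k=l=i^*$ contributes exactly $H_0\,a_{i^*}^2=H_0(C^{uu}_{i^*,i^*})^2$, since $\Psi_{i^*,i^*}=1$ and $[H^\top H]_{i^*,i^*}=H_0$ by hypotheses (3) and (1). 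The remaining task is to show that all the other terms subtract at most $(h_0+\psi_0(1+h_0))H_0\,a_{i^*}^2$.

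The key elementary input is that $i^*$ maximizes the diagonal: positive semidefiniteness of $C^{uu}$ gives $|a_k|=|C^{uu}_{i^*,k}|\le\sqrt{C^{uu}_{i^*,i^*}C^{uu}_{k,k}}\le C^{uu}_{i^*,i^*}=a_{i^*}$ for every $k$, because $C^{uu}_{i^*,i^*}=\|C^{uu}\|_{\max}$. I would then split the double sum according to whether $k=i^*$ or $k\neq i^*$. The ``row $i^*$'' contribution $a_{i^*}\sum_{l\neq i^*}a_l[H^\top H]_{i^*,l}$ is bounded in absolute value by $a_{i^*}^2\sum_{l\neq i^*}|[H^\top H]_{i^*,l}|\le h_0 H_0\,a_{i^*}^2$ via hypothesis (2). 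For the $k\neq i^*$ part, I would bound the inner sum $\bigl|\sum_l a_l[H^\top H]_{k,l}\bigr|\le a_{i^*}\bigl([H^\top H]_{k,k}+\sum_{l\neq k}|[H^\top H]_{k,l}|\bigr)\le a_{i^*}H_0(1+h_0)$, again using (1) and (2); then multiplying by the localization weight $\Psi_{i^*,k}\ge0$, summing over $k\neq i^*$, and invoking $\sum_{k\neq i^*}\Psi_{i^*,k}<\psi_0$ from hypothesis (3) yields the outer bound $\psi_0(1+h_0)H_0\,a_{i^*}^2$. Collecting these estimates gives
\[
[\Cloc^{up}C^{pu}]_{i^*,i^*}\ge H_0\bigl(1-h_0-\psi_0(1+h_0)\bigr)(C^{uu}_{i^*,i^*})^2,
\]
which is exactly the claim with $c_o=H_0(1-(h_0+\psi_0(1+h_0)))$.

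The main obstacle is bookkeeping of signs rather than any deep idea. Hypotheses (2) and (3) are most naturally applied to the \emph{absolute} off-diagonal sums $\sum_{l\neq i}|[H^\top H]_{i,l}|$ and $\sum_{j\neq i}\Psi_{i,j}$, so I would either read the stated hypotheses in this absolute-value sense or note that $H^\top H$ and $\Psi$ have nonnegative off-diagonal entries (the latter being standard for Gaspari--Cohn type localization functions). One should also confirm that $c_o>0$ is meaningful under the stated ranges $h_0,\psi_0\in[0,1)$: this forces the joint condition $h_0+\psi_0(1+h_0)<1$, i.e.\ the off-diagonal decay of both $H^\top H$ and $\Psi$ must be fast enough. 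With these points settled, the estimate is a direct triangle-inequality computation organized around the dominant diagonal term.
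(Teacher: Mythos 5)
Your proof is correct and takes essentially the same route as the paper: expand $[\Cloc^{uu}H^\top H C^{uu}]_{i^*,i^*}$ as a double sum, keep the dominant $k=l=i^*$ term (worth $H_0(C^{uu}_{i^*,i^*})^2$), and use $|C^{uu}_{i^*,k}|\le C^{uu}_{i^*,i^*}$ to bound the row-$i^*$ remainder by $h_0H_0(C^{uu}_{i^*,i^*})^2$ and the $k\neq i^*$ block by $\psi_0(1+h_0)H_0(C^{uu}_{i^*,i^*})^2$, exactly as in the paper's one-display estimate. Your explicit caveat that hypotheses (2)--(3) must be read in the absolute-value sense (or with nonnegative off-diagonal entries) is a sign-bookkeeping point the paper's proof passes over silently with the phrase ``we make all terms negative,'' so your write-up is, if anything, slightly more careful on that score.
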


\begin{proof}
Let $i$ be  the index so that $[C^{uu}]_{{i^*},{i^*}}$ is maximized.
Then note that,
\begin{align*}
\sum_{j=1}^{d_y} \Cloc^{up}_{{i^*},j}C^{up}_{{i^*},j}&=[\Cloc^{uu}H^\top  H C^{uu}]_{{i^*},{i^*}}\\
&=\sum_{k,j=1}^{d_u}C^{uu}_{{i^*},k}\Psi_{{i^*},k} [H^\top H]_{j,k}C^{uu}_{{i^*},j}\\
&\quad \text{(We make all terms negative except for $j=k={i^*}$)}\\
&\geq (C^{uu}_{{i^*},{i^*}})^2\left([H^\top H]_{{i^*},{i^*}}- \sum_{j\neq {i^*}}[H^\top H]_{j,{i^*}}-\sum_{k\neq {i^*}}\Psi_{{i^*},k} \sum_{j}[H^\top H]_{j,k}\right)\\
&\geq (C^{uu}_{{i^*},{i^*}})^2[H^\top H]_{{i^*},{i^*}}(1- (h_0+\psi_0(1+h_0))).
\end{align*}

\end{proof}

With Assumption \ref{aspt:obs} in place
and with suitable localization schemes that satisfy the assumption,
we can now prove an upper bound for the collapse
of the \emph{localized} EKI ensemble.
\begin{thm}
\label{thm:max}
Under Assumption \ref{aspt:obs}, for any $\delta>0$, there is a $t_0$ such that when $t\geq t_0$
\[
\|C^{uu}(t)\|_{\max}\leq \frac{M_C}{1+t}\text{, where } M_C:= \frac{\sqrt{1+8c_o \sigma}+1}{4c_o(1-\delta) }. 
\]
\end{thm}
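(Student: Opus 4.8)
The plan is to reduce the matrix ODE \eqref{eqn:CuuODE} to a scalar differential inequality for the largest diagonal entry $m(t) := \|C^{uu}(t)\|_{\max} = \max_i C^{uu}_{i,i}(t)$ and then compare it against a function of the form $a/(1+t)$. First I would fix, for each $t$, an index $i^*(t)$ achieving the maximum and read off the $(i^*,i^*)$ diagonal of \eqref{eqn:CuuODE}. Since $C^{up}\Cloc^{pu} = (\Cloc^{up}C^{pu})^\top$, the two cross terms contribute equally on the diagonal, and $\Sigma_{i^*,i^*}=1$ by construction, so
\[
\frac{d}{dt}C^{uu}_{i^*,i^*}(t) = -2[\Cloc^{up}C^{pu}]_{i^*,i^*}(t) + \frac{\sigma}{(1+t)^2}.
\]
Assumption \ref{aspt:obs} bounds $[\Cloc^{up}C^{pu}]_{i^*,i^*}\geq c_o\, m(t)^2$. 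Because $m$ is a finite maximum of smooth functions it is locally Lipschitz, and a standard envelope (Danskin) argument shows its upper Dini derivative $D^+m$ is controlled by the derivative of the active entry; hence
\[
D^+ m(t) \leq -2c_o\, m(t)^2 + \frac{\sigma}{(1+t)^2}.
\]

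Next I would pass to the rescaled quantity $w(t):=(1+t)\,m(t)$, for which the inequality becomes autonomous up to the explicit weight:
\[
D^+ w(t) \leq -\frac{1}{1+t}\,q\bigl(w(t)\bigr), \qquad q(w):=2c_o w^2 - w - \sigma.
\]
The quadratic $q$ has a unique positive root $a_+ = \frac{1+\sqrt{1+8c_o\sigma}}{4c_o}$ and is strictly increasing on $[a_+,\infty)$, since $a_+\geq \frac{1}{2c_o}>\frac{1}{4c_o}$. The constant in the theorem is exactly $M_C = a_+/(1-\delta)$, so that $M_C>a_+$ and $\kappa:=q(M_C)>0$; this is precisely where $\delta>0$ is needed, as the descent below becomes infinitely slow as $w\downarrow a_+$.

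The final step is a barrier-plus-descent comparison. On any interval where $w(t)\geq M_C$ one has $q(w(t))\geq \kappa$ and hence $D^+ w(t)\leq -\kappa/(1+t)$; integrating and using the logarithmic divergence of $\int dt/(1+t)$ forces $w$ to fall below $M_C$ at some finite time $t_0$. Moreover $M_C$ is an absorbing barrier: whenever $w(t)=M_C$ we have $D^+ w(t)\leq -\kappa/(1+t)<0$, so $w$ cannot cross upward through $M_C$. Together these give $w(t)\leq M_C$ for all $t\geq t_0$, i.e. $\|C^{uu}(t)\|_{\max}=m(t)\leq M_C/(1+t)$, which is the claim. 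The main obstacle I expect is the non-smoothness of $m(t)=\max_i C^{uu}_{i,i}(t)$: the maximizing index can switch in time, so the whole argument must be phrased with upper Dini derivatives and justified via the envelope theorem rather than with ordinary derivatives, and the differential-inequality comparison (both the absorbing barrier and the finite-time entry) must be established in that weaker sense. By contrast, the root formula for $q$ and the integration of the descent bound are routine.
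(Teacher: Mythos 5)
Your proof is correct, and its first half coincides with the paper's: both reduce the matrix ODE \eqref{eqn:CuuODE} to the scalar Riccati-type inequality $\tfrac{d}{dt}C^{uu}_{i^*,i^*}\leq -2c_o\|C^{uu}\|_{\max}^2+\lambda_t$ at a maximizing index, using that the two cross terms agree on the diagonal and that $\Sigma_{i,i}=1$. Where you genuinely diverge is in how this inequality is turned into the $O(1/t)$ bound. The paper invokes two appendix lemmas: a comparison principle (Lemma \ref{lem:boundonderivative}), which sidesteps the switching of the maximizing index by comparing against a solution of $\dot y=g(y,t)+\delta_0$, and an explicit solution formula for the Riccati equation $\dot y=-2c_o y^2+\sigma/(t+1)^2$ (Lemma \ref{lem:ric}), whose asymptotics $y_t\sim \frac{c_-}{-a(t+1)}$ produce $M_C$. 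You instead handle the non-smoothness directly with Dini derivatives, rescale to $w=(1+t)\|C^{uu}\|_{\max}$, and run a descent-plus-absorbing-barrier argument on the quadratic $q(w)=2c_o w^2-w-\sigma$, whose positive root is exactly $\frac{1+\sqrt{1+8c_o\sigma}}{4c_o}=(1-\delta)M_C$ — the same constant the paper extracts as $\frac{c_-}{-a}$. Your route is more elementary and self-contained: it needs neither the exact Riccati solution nor the paper's comparison lemma, only the standard facts that $D^+$ of a finite maximum of $C^1$ functions is bounded by the derivatives of the active entries and that $D^+w\le 0$ on an interval forces monotonicity; it also makes transparent where $\delta>0$ enters (to get $q(M_C)>0$ strictly above the root, so the barrier repels). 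What the paper's route buys in exchange is sharper information about the comparison dynamics — an exact solution whose decay matches the bound, showing the $1/t$ rate is attained by the barrier itself — plus reusable technical lemmas. One small caution for your write-up: the envelope step needs Assumption \ref{aspt:obs} to hold at \emph{every} index attaining the maximum when there are ties; this is the natural reading of the assumption and is implicitly relied on by the paper as well.
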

\begin{proof}
We look at the diagonal entry of the covariance ODE  \eqref{eqn:CuuODE}
\[
\frac{d}{dt}C^{uu}_{i,i}(t)=-2\sum_{j=1}^{d_y}\Cloc^{up}_{i,j}(t) C^{up}_{i,j}(t)+\lambda_t.
\]
Let $i^*(t)$ be the index such that $C^{uu}_{i^*,i^*}(t)=\|C^{uu}(t)\|_{\max}$. Then Assumption \ref{aspt:obs} indicates that 
\[
\frac{d}{dt}C^{uu}_{i,i}(t)\leq -2c_oC^{uu}_{i,i}(t)+\lambda_t,\quad i=i^*(t). 
\]
Using a comparison principle Lemma \ref{lem:boundonderivative}, we can conclude that 
$\|C^{uu}_t\|_{\max}\leq y_t,$ where $y_t$ is the solution to a Riccati equation
\[
\dot{y}_t=-2c_o y_t^2+\frac{\sigma}{(t+1)^2},\quad y_0=\|C^{uu}(0)\|_{\max}.
\]
The solutions to  Riccati equations and their properties can be found in Lemma \ref{lem:ric}. In particular, we have our claim by finding 
\[
c_-=\frac{-1-\sqrt{1+8c_o\sigma}}{2},\quad \frac{c_-}{-a}=\frac{1+\sqrt{1+8c_o\sigma}}{4c_o}.
\]
\end{proof}

We note that Theorem \ref{thm:max} holds without inflation (we can set $\sigma=0$).
The theorem can indeed also hold without localization, 
provided Assumption \ref{aspt:obs} is satisfied by the unlocalized EKI.
The importance of the theorem,
however, is that the upper bound holds when localization is applied,
provided the localization is chosen to satisfy Assumption \ref{aspt:obs}.
When this is indeed the case, 
the LEKI satisfies this upper bound while exploring an enriched subspace,
larger than the subspace spanned by the initial ensemble
and, for that reason, can converge to a different solution than the unlocalized EKI.
Finally, we note that it is natural to set $\lambda_t=\sigma/(1+t)^2$, 
which is the same order as $\tfrac {d}{dt}(1/(1+t))$.

\subsection{Regularity and covariance lower bound}
We proceed to establish lower bounds for the ensemble covariance of LEKI
(as a second step towards a proof of the collapse of the LEKI ensemble).
In classical Kalman filter theory, posterior covariance lower bounds can be established when the system is controllable \cite{kalman1960new}.
For LEKI, controllability can be obtained by using additive inflation $\xi^j$ together with appropriate localization.
In particular, the additive inflation will lead to a lower bound for the diagonal terms of $C^{uu}$. 
Then using an appropriate  localization function $\psi$ can ensure that $\Cloc^{uu}$ is full rank. 
For our proof, we require 
the following sufficient regularity conditions.
\begin{aspt}[Regularity]
\label{aspt:regu}
There is an $L_R>0$ so that the following holds for all $i$
\[
L_R C^{uu}_{i,i}\|C^{uu}\|_{\max} \geq \sum_{j=1}^{d_y} \Cloc^{up}_{i,j}C^{up}_{i,j}. 
\]
In addition, the localization matrix is positive definite with $\psi_0=\lambda_{\min}(\Psi)>0$.  
\end{aspt}

Assumption \ref{aspt:regu} contains an assumption on the localization function,
which can be satisfied by choosing a sufficiently small localization radius $R_l$ in \eqref{eqn:uuloc}. 
Next, we show that  the first part of Assumption \ref{aspt:regu},
which connects $C^{uu}$ to the cross covariance $C^{up}$,
holds if  the observations are Lipschitz. 
For the centralized localization scheme, this results in the following Lemma.
\begin{lem}
\label{lem:RegCentral}
Suppose that each observation $G_j$ is component-wise Lipschitz: 
\[
|G_j(u)-G_j(v)|\leq \sum_k L_{j,k}|u_k-v_k| 
\]
while $\sum_k L_{j,k}\leq L$ for a constant $L$ and  all $j$, then Assumption \ref{aspt:regu} holds for the centralized localization scheme with 
\[
L_R=L^2\max_i\left(\sum_{j=1}^{d_y} \Psi_{i,i(j)}\right).
\]
\end{lem}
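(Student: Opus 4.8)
The plan is to reduce the inequality in Assumption~\ref{aspt:regu} to a componentwise estimate on the cross covariance, and then to control that cross covariance through the Lipschitz hypothesis on $G$. Recall that under the centralized scheme $\Cloc^{up}_{i,j}=C^{up}_{i,j}\Psi_{i,i(j)}$, and since a localization function such as Gaspari--Cohn is nonnegative we have $\Cloc^{up}_{i,j}C^{up}_{i,j}=\Psi_{i,i(j)}(C^{up}_{i,j})^2\ge 0$. Hence the right-hand side $\sum_j\Cloc^{up}_{i,j}C^{up}_{i,j}$ is a nonnegatively weighted sum of the squared entries $(C^{up}_{i,j})^2$, so it suffices to bound each such square. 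First I would observe that the joint sample covariance of the stacked vector $(u,G(u))$ is positive semidefinite, so every $2\times2$ principal minor is nonnegative; applied to the rows indexed by the $i$-th parameter and the $j$-th output this is exactly the Cauchy--Schwarz inequality
\[
(C^{up}_{i,j})^2\le C^{uu}_{i,i}\,C^{pp}_{j,j}.
\]

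The crux is then to bound the output variance $C^{pp}_{j,j}$ by $\|C^{uu}\|_{\max}$ using only the componentwise Lipschitz bound. Here I would pass to the pairwise representation of the sample variance, writing $C^{pp}_{j,j}=\tfrac{1}{2J(J-1)}\sum_{m,n}(G_j(u^m)-G_j(u^n))^2$ and likewise $C^{uu}_{k,k}=\tfrac{1}{2J(J-1)}\sum_{m,n}(u^m_k-u^n_k)^2$, because the hypothesis naturally compares the values of $G_j$ at pairs of ensemble members. Applying $|G_j(u^m)-G_j(u^n)|\le\sum_k L_{j,k}|u^m_k-u^n_k|$ followed by a weighted Cauchy--Schwarz step,
\[
\Bigl(\sum_k L_{j,k}|u^m_k-u^n_k|\Bigr)^2\le\Bigl(\sum_k L_{j,k}\Bigr)\Bigl(\sum_k L_{j,k}|u^m_k-u^n_k|^2\Bigr),
\]
and using $\sum_k L_{j,k}\le L$, I obtain after interchanging the order of summation $C^{pp}_{j,j}\le L\sum_k L_{j,k}C^{uu}_{k,k}$. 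Bounding $C^{uu}_{k,k}\le\|C^{uu}\|_{\max}$ and invoking $\sum_k L_{j,k}\le L$ a second time yields $C^{pp}_{j,j}\le L^2\|C^{uu}\|_{\max}$.

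Combining the two displays gives $(C^{up}_{i,j})^2\le L^2 C^{uu}_{i,i}\|C^{uu}\|_{\max}$ for every $j$, and summing against the nonnegative weights $\Psi_{i,i(j)}$ then produces
\[
\sum_{j=1}^{d_y}\Cloc^{up}_{i,j}C^{up}_{i,j}\le L^2 C^{uu}_{i,i}\|C^{uu}\|_{\max}\sum_{j=1}^{d_y}\Psi_{i,i(j)}\le L_R\,C^{uu}_{i,i}\|C^{uu}\|_{\max},
\]
with $L_R=L^2\max_i\sum_{j=1}^{d_y}\Psi_{i,i(j)}$, as claimed. I do not expect a genuine obstacle, since the argument is a short chain of Cauchy--Schwarz estimates; the only point requiring care is the bookkeeping of the constant. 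Specifically, one must arrange that the factor $\sum_k L_{j,k}\le L$ is used exactly twice -- once to extract the $L$ in front of the weighted Cauchy--Schwarz step and once after pulling out $\|C^{uu}\|_{\max}$ -- so that precisely $L^2$, and no spurious additional factor, appears. Using the sharp pairwise normalization $\tfrac{1}{2J(J-1)}$ rather than a Jensen-type estimate is what keeps this constant tight.
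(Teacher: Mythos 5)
Your proof is correct, and it reaches the paper's key intermediate bound $(C^{up}_{i,j})^2\le L^2\,C^{uu}_{i,i}\|C^{uu}\|_{\max}$ by a slightly different decomposition. The paper never introduces the output variance $C^{pp}_{j,j}$: it writes $C^{up}_{i,j}$ itself in pairwise form, inserts the Lipschitz bound, and then applies a single weighted Cauchy--Schwarz over the joint index $(k,m,n)$, splitting $|u^m_i-u^n_i|\,|u^m_k-u^n_k|$ so that one factor produces $\sum_k L_{j,k}C^{uu}_{i,i}$ and the other $\sum_k L_{j,k}C^{uu}_{k,k}$. You instead factor first through the $2\times 2$ principal minor of the joint sample covariance of $(u,G(u))$, obtaining $(C^{up}_{i,j})^2\le C^{uu}_{i,i}\,C^{pp}_{j,j}$ by pure linear algebra, and only then use the Lipschitz hypothesis to prove the standalone estimate $C^{pp}_{j,j}\le L\sum_k L_{j,k}C^{uu}_{k,k}\le L^2\|C^{uu}\|_{\max}$. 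The two routes are equally short and give the identical constant $L_R=L^2\max_i\sum_{j}\Psi_{i,i(j)}$; yours is somewhat more modular, since the PSD-minor step is independent of any regularity of $G$ and the variance bound on $C^{pp}_{j,j}$ is a reusable fact (it is close in spirit to condition \eqref{aspt:regu2}). Two small remarks: both arguments tacitly use that the localization weights $\Psi_{i,i(j)}$ are nonnegative (true for Gaspari--Cohn, and needed to sum the entrywise bounds), and your pairwise normalization $\tfrac{1}{2J(J-1)}$ is the correct sharp one -- the paper's displayed pairwise formula drops the factor $2$, a self-consistent typo that does not affect its final constant.
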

\begin{proof}
For centralized localization scheme, $C^{up}_{i,j}\Cloc^{up}_{i,j}=\Psi_{i,i(j)}(C^{up}_{i,j})^2$, where
\begin{align*}
(C^{up}_{i,j})^2&=\frac{1}{J^2(J-1)^2}\left(\sum_{m,n} (u^{m}_i-u^{n}_i) (G_j(u^{m})-G_j(u^{n}))\right)^2\\
&\leq \frac{1}{J^2(J-1)^2}\left(\sum_{m,n} |u^{m}_i-u^{n}_i| \left(\sum_k L_{j,k}|u_k^{m}-u_k^{n}|\right)\right)^2\\
&= \frac{1}{J^2(J-1)^2}\left( \sum_k \sum_{m,n} L_{j,k}|u^{m}_i-u^{n}_i| |u_k^{m}-u_k^{n}|)\right)^2\\
&\leq \frac{1}{J^2(J-1)^2}\left( \sum_k \sum_{m,n} L_{j,k}|u^{m}_i-u^{n}_i|^2\right)\left(\sum_k L_{j,k}\sum_{m,n} |u_k^{m}-u_k^{n}|^2\right)\\
&=\left( \sum_k  L_{j,k}C^{uu}_{i,i}\right)\left( \sum_k  L_{j,k}C^{uu}_{k,k}\right)\\
&\leq \left( \sum_k  L_{j,k}C^{uu}_{i,i}\right)\left( \sum_k  L_{j,k}\|C^{uu}\|_{\max}\right)
\leq L^2 C^{uu}_{i,i}\|C^{uu}\|_{\max}.
\end{align*}
\end{proof}

For the linearized localization scheme, we have the following lemma.
\begin{lem}
\label{lem:RegLinear}
Suppose the linearized localization scheme is applied with an $H$ such that
\[
\sum_{j=1}^{d_y} |H_{j,k}|\leq L,\quad\sum_{k=1}^{d_u} |H_{j,k}|\leq L,\quad \forall  k\leq d_u, j\leq d_y,
\]
then Assumption \ref{aspt:regu} holds with 
\[
L_R=L^2\max_i\left(\sum_{k=1}^{d_u} \Psi_{i,k}\right).
\]
\end{lem}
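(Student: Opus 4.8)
The plan is to mirror the structure of the centralized case (Lemma~\ref{lem:RegCentral}), but to exploit the product form $\Cloc^{up}=\Cloc^{uu}H^\top$ and $C^{up}=C^{uu}H^\top$ that is special to the linearized scheme. First I would rewrite the target quantity as a single diagonal entry of a matrix product. Using the symmetry of $C^{uu}$ and expanding the sum, this is exactly the reduction already carried out in the proof of Lemma~\ref{lem:obsLinear}:
\[
\sum_{j=1}^{d_y}\Cloc^{up}_{i,j}C^{up}_{i,j}=[\Cloc^{uu}H^\top H C^{uu}]_{i,i}=\sum_{k,m=1}^{d_u} C^{uu}_{i,k}\Psi_{i,k}\,C^{uu}_{i,m}\,[H^\top H]_{k,m}.
\]
This isolates the two ingredients I must control separately: the covariance factors $C^{uu}_{i,k}C^{uu}_{i,m}$ and the Gram-matrix entries $[H^\top H]_{k,m}$.

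For the covariance part, I would use positive semidefiniteness of $C^{uu}$, which gives $|C^{uu}_{i,k}|\le\sqrt{C^{uu}_{i,i}C^{uu}_{k,k}}$ entrywise, so that
\[
|C^{uu}_{i,k}C^{uu}_{i,m}|\le C^{uu}_{i,i}\sqrt{C^{uu}_{k,k}C^{uu}_{m,m}}\le C^{uu}_{i,i}\|C^{uu}\|_{\max},
\]
since every diagonal entry is dominated by $\|C^{uu}\|_{\max}$. Because the localization weights satisfy $\Psi_{i,k}\ge 0$, I can pass to absolute values term by term and factor this uniform bound out of the double sum.

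For the $H$ part, I would bound the $\ell^1$ mass of each row of $H^\top H$ by combining the two hypotheses on $H$: for every fixed $k$,
\[
\sum_{m=1}^{d_u}\bigl|[H^\top H]_{k,m}\bigr|\le\sum_{m}\sum_{j}|H_{j,k}||H_{j,m}|=\sum_{j}|H_{j,k}|\sum_{m}|H_{j,m}|\le L\sum_{j}|H_{j,k}|\le L^2.
\]
Combining the two estimates yields $\sum_{k,m}\Psi_{i,k}\bigl|[H^\top H]_{k,m}\bigr|\le L^2\sum_k\Psi_{i,k}$, and taking the maximum over $i$ produces the constant $L_R=L^2\max_i\sum_k\Psi_{i,k}$, which is precisely the claimed bound.

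I do not anticipate a genuine obstacle: the argument is a short chain of triangle-inequality and Cauchy--Schwarz estimates once the matrix-product reduction is in place. The only point needing a little care is the sign bookkeeping, since the summand $C^{uu}_{i,k}\Psi_{i,k}C^{uu}_{i,m}[H^\top H]_{k,m}$ is not sign-definite; bounding the whole expression by its absolute value is legitimate precisely because $\Psi_{i,k}\ge 0$. It is also worth verifying that both row/column conditions on $H$ are genuinely used (one to peel off $\sum_m|H_{j,m}|\le L$, the other to collapse $\sum_j|H_{j,k}|\le L$), so that neither hypothesis is redundant.
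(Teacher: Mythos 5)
Your proof is correct and takes essentially the same route as the paper's: expand $\Cloc^{up}=\Cloc^{uu}H^\top$ and $C^{up}=C^{uu}H^\top$, apply the entrywise PSD bound $|C^{uu}_{i,k}|\le\sqrt{C^{uu}_{i,i}C^{uu}_{k,k}}\le\sqrt{C^{uu}_{i,i}\|C^{uu}\|_{\max}}$, and then invoke both $\ell^1$ hypotheses on $H$ after exchanging the order of summation, so your repackaging via the row sums of $H^\top H$ is only a cosmetic reorganization of the identical estimate. If anything, your version is slightly more careful with absolute values (the paper's display omits them on the $H_{j,k}$ factors), but the substance is the same.
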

\begin{proof}
Note that 
\[
C^{up}_{i,j}=\sum_{k=1}^{d_u}C^{uu}_{i,k}H_{j,k},\quad \Cloc^{up}_{i,j}=\sum_{k=1}^{d_u}C^{uu}_{i,k}\Psi_{i,k} H_{j,k}.
\]
This leads to 
\begin{align*}
\sum_{j=1}^{d_y}\Cloc^{up}_{i,j} C^{up}_{i,j}&=\left(\sum_{k=1}^{d_u}C^{uu}_{i,k}\Psi_{i,k} H_{j,k}\right) \left(\sum_{k=1}^{d_u}C^{uu}_{i,k}H_{j,k}\right)\\
&\leq \sum_{j=1}^{d_y}\left(\sum_{k=1}^{d_u}\sqrt{C^{uu}_{i,i}C^{uu}_{k,k}}\Psi_{i,k} H_{j,k}\right) \left(\sum_{k=1}^{d_u}\sqrt{C^{uu}_{i,i}C^{uu}_{k,k}}H_{j,k}\right)\\
&\leq \sum_{j=1}^{d_y}\left(\sum_{k=1}^{d_u}\sqrt{C^{uu}_{i,i}\|C^{uu}\|_{\max}}\Psi_{i,k} H_{j,k}\right) \left(\sum_{k=1}^{d_u}\sqrt{C^{uu}_{i,i}\|C^{uu}\|_{\max}}H_{j,k}\right)\\
&\leq LC^{uu}_{i,i}\|C^{uu}\|_{\max}\sum_{k=1}^{d_u}\Psi_{i,k} \sum_{j=1}^{d_y} H_{j,k}\leq  L^2C^{uu}_{i,i}\|C^{uu}\|_{\max}\left(\sum_{k=1}^{d_u} \Psi_{i,k}\right).
\end{align*}
\end{proof}

Similar to our proof of the upper bound of the LEKI covariance matrix,
Lemmas \ref{lem:RegCentral} and \ref{lem:RegLinear} outline assumptions on the observation 
and localization functions that must be satisfied for the collapse of the localized EKI ensemble.
Specifically, with Assumption \ref{aspt:regu} in place and connected to the localization schemes,
we have the following theorem for a lower bound on the ensemble covariance of the LEKI ensemble. 
\begin{thm}
\label{thm:min}
Under Assumptions \ref{aspt:obs} and \ref{aspt:regu}, for any $\delta>0$, there is a $t_1$, so that when $t>t_1$
\[
C^{uu}_{i,i}(t)\geq \frac{m_c}{t+1} \quad\text{ for all index } i, 
\]
where
\[
m_c=\frac{\sigma(1-\delta)}{2L_R M_C-1}. 
\]
Moreover, 
\[
\lambda_{min} (\Cloc^{uu})\geq \psi_0\min_{i}C^{uu}_{i,i}\geq \frac{m_c\psi_0}{t+1}.
\]
\end{thm}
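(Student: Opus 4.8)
The plan is to mirror the argument behind the upper bound (Theorem \ref{thm:max}), but now pushing the diagonal entries of $C^{uu}$ from below. Beginning with the diagonal component of the covariance ODE \eqref{eqn:CuuODE},
\[
\frac{d}{dt}C^{uu}_{i,i}(t)=-2\sum_{j=1}^{d_y}\Cloc^{up}_{i,j}(t)C^{up}_{i,j}(t)+\lambda_t,
\]
I would bound the nonnegative cross-covariance sum from above via the first part of Assumption \ref{aspt:regu}, $\sum_{j}\Cloc^{up}_{i,j}C^{up}_{i,j}\leq L_R C^{uu}_{i,i}\|C^{uu}\|_{\max}$, and then substitute the decay $\|C^{uu}(t)\|_{\max}\leq M_C/(1+t)$ granted by Theorem \ref{thm:max} for $t\geq t_0$. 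Since $C^{uu}_{i,i}(t)\geq 0$, this yields for every index $i$ and every $t\geq t_0$ the scalar differential inequality
\[
\frac{d}{dt}C^{uu}_{i,i}(t)\geq-\frac{2L_RM_C}{1+t}\,C^{uu}_{i,i}(t)+\frac{\sigma}{(1+t)^2}.
\]

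Next I would apply a comparison principle (cf. Lemma \ref{lem:boundonderivative}) to dominate each $C^{uu}_{i,i}$ from below by the solution $z(t)$ of the linear ODE $\dot z=-\frac{2L_RM_C}{1+t}z+\frac{\sigma}{(1+t)^2}$ with $z(t_0)=0$; the initial value $0$ is chosen because it lies below $C^{uu}_{i,i}(t_0)$ for every $i$ at once, producing a single lower envelope valid for all indices. Writing $a:=2L_RM_C$ and solving explicitly with the integrating factor $(1+t)^{a}$ gives
\[
z(t)=\frac{\sigma}{a-1}\left[\frac{1}{1+t}-\frac{(1+t_0)^{a-1}}{(1+t)^{a}}\right],
\]
so that $(1+t)z(t)\to \sigma/(a-1)$ as $t\to\infty$, the boundary term decaying like $(1+t)^{1-a}$. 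Hence for any $\delta>0$ there is a $t_1\geq t_0$ with $z(t)\geq m_c/(1+t)$ for $t>t_1$, where $m_c=\sigma(1-\delta)/(2L_RM_C-1)$; combined with $C^{uu}_{i,i}(t)\geq z(t)$ this gives the first claim.

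For the eigenvalue bound I would use the Schur-product structure $\Cloc^{uu}=C^{uu}\circ\Psi$ together with the second part of Assumption \ref{aspt:regu}, namely $\psi_0=\lambda_{\min}(\Psi)>0$. Splitting $\Psi=\psi_0 I+(\Psi-\psi_0 I)$ with $\Psi-\psi_0 I\succeq 0$ and invoking the Schur product theorem to get $C^{uu}\circ(\Psi-\psi_0 I)\succeq 0$, I obtain
\[
\Cloc^{uu}=\psi_0\,(C^{uu}\circ I)+C^{uu}\circ(\Psi-\psi_0 I)\succeq\psi_0\,\mathrm{diag}(C^{uu}_{i,i})\succeq\psi_0\Big(\min_i C^{uu}_{i,i}\Big)I,
\]
and taking smallest eigenvalues and inserting the diagonal lower bound yields $\lambda_{\min}(\Cloc^{uu})\geq\psi_0\min_i C^{uu}_{i,i}\geq m_c\psi_0/(1+t)$.

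The step I expect to be most delicate is the comparison/asymptotics: one must confirm $a=2L_RM_C>1$ (otherwise $m_c\leq 0$ and the envelope would decay strictly slower than the $1/t$ upper bound of Theorem \ref{thm:max}, a contradiction), and then transfer the limiting constant $\sigma/(a-1)$ into a genuine pointwise bound by absorbing the vanishing boundary term into the factor $(1-\delta)$ at a finite threshold $t_1$. A minor subtlety is that the index realizing $\min_i C^{uu}_{i,i}(t)$ can change with $t$, which is exactly why I bound each diagonal entry by the \emph{same} envelope $z(t)$ instead of writing an ODE for the minimum directly.
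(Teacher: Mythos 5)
Your proposal is correct and follows essentially the same route as the paper: the same differential inequality obtained from Assumption \ref{aspt:regu}, a comparison principle combined with the decay bound of Theorem \ref{thm:max} leading to the identical closed-form lower envelope $\frac{\sigma\left((t+1)^{a-1}-(t_0+1)^{a-1}\right)}{(a-1)(t+1)^{a}}$ with $a=2L_RM_C$, and the same Schur-product argument for $\lambda_{\min}(\Cloc^{uu})$. The only (harmless) reorganizations are that you substitute $\|C^{uu}\|_{\max}\leq M_C/(1+t)$ into the ODE before solving while the paper keeps $\|C^{uu}(t)\|_{\max}$ inside Duhamel's formula and bounds the exponential kernel afterward, and that you explicitly flag the need for $2L_RM_C>1$ (indeed guaranteed, since Assumptions \ref{aspt:obs} and \ref{aspt:regu} together force $L_R\geq c_o$), a point the paper leaves implicit.
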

\begin{proof}
By Assumption \ref{aspt:regu}
\[
\frac{d}{dt}C^{uu}_{i,i}=-2\sum_{j=1}^{d_y}\Cloc^{up}_{i,j}C^{up}_{i,j} +\lambda_t\sigma^2\geq
-2L_R C^{uu}_{i,i}\|C^{uu}\|_{\max}+\lambda_t.
\]
By comparison principle, Lemma \ref{lem:boundonderivative},  we have 
$C^{uu}_{i,i}(t)\geq z_t,$ where $z_t$ is the solution to 
\[
\dot{z}_t=-2L_R \|C^{uu}(t)\|_{\max} z_t+\frac{\sigma}{(t+1)^2},\quad z_0=\|C^{uu}_0\|_{i,i}. 
\]
Using Duhamel's formula, we can write
\begin{align*}
z_t&=\exp\left(-2L_R\int^t_{t_0} \|C^{uu}(r)\|_{\max} dr\right) z_{t_0}+\int^t_{t_0} \exp\left(-2L_R\int^t_s \|C^{uu}(r)\|_{\max} dr\right) \frac{\sigma}{(s+1)^2}ds\\
&\geq \int^t_{t_0} \exp\left(-2L_R\int^t_s \|C^{uu}(r)\|_{\max} dr\right) \frac{\sigma}{(s+1)^2}ds\\
&\geq \int^t_{t_0} \exp\left(-2L_R \int^t_s \frac{M_C}{r+1}dr\right) \frac{\sigma}{(s+1)^2}ds,\quad \text{by Theorem \ref{thm:max}}\\
&=\sigma\int^t_{t_0}\frac{(s+1)^{2L_R M_C-2}}{(t+1)^{2L_R M_C}}ds=\frac{\sigma\left((t+1)^{2L_R M_C-1}-(t_0+1)^{2L_R M_C-1}\right)}{(2L_R M_C-1)(t+1)^{2L_R M_C}}.
\end{align*}
It can be seen that a threshold  $t_1$ exists. 

Finally, if $\Psi\succeq \psi_0 I_{d_u}$, we note that by Schur product theorem,
\[
\Cloc^{uu}=C^{uu}\circ \Psi\succeq C^{uu}\circ \psi_0 I_{d_u}=\psi_0 D^{uu}\succeq \psi_0\min_{i}C^{uu}_{i,i} I_{d_u}.
\] 
Here $D^{uu}$ is the diagonal part of $C^{uu}$. 
%
\end{proof}

In summary, Theorems \ref{thm:max} and \ref{thm:min}
show that the localized LEKI ensemble collapses at a controlled rate ($O(1/t)$).
The collapse occurs under Assumptions \ref{aspt:obs} and \ref{aspt:regu},
which are specified to localization schemes (centralized and linear\slash linearized)
in Lemmas \ref{lem:obsCentral} and \ref{lem:RegCentral}
about the upper\slash lower bound for centralized localization,
and in Lemmas \ref{lem:obsLinear} and \ref{lem:RegLinear}
the upper\slash lower bound for linear localization.

\section{Optimization guarantee}
\label{sec:opt}
In this section, we discuss whether LEKI can converge to the global minimizer of the loss function $l$. 
As is commonly done in optimization literature, we assume that the objective function $l$ is $c$-strongly convex: 
\begin{equation}
\label{eq:strong}
\|\nabla l(u)\|^2\geq  c (l(u)-l(u^*)),
\end{equation}
which guarantees the existence of a unique minimizer $u^*$.

Recall that the Kalman filter update can be viewed as a Gauss-Newton update \cite{bell1993iterated} and the EKI inherits this property \cite{chada2019convergence}. 
Specifically, the cross covariance matrix $C^{up}$ approximates $C^{uu} \nabla G(\bar{u}(t))^\top $.
We expect that similar ideas apply to LEKI, so that the performance of LEKI depends on how accurately  $\Cloc^{up}$ approximates $\Cloc^{uu} \nabla G(\bar{u}(t))^\top $. 
We then describe how this error decreases by studying the following error matrix:
\begin{equation}
\label{eq:Rt}
R(t)=\nabla G(\bar{u}(t)) \Cloc^{up}(t)-\nabla G(\bar{u}(t)) \Cloc^{uu}(t)\nabla G(\bar{u}(t))^\top.
\end{equation}
Roughly speaking, Theorem \ref{thm:globalloss} below shows that if  $\|R(t)\|$ decays faster than $1/t$, then the global loss of LEKI $l(\ubar(t))$ can decay like $1/{t}$ (if the convexity is strong). 
For uniform convergence, we focus on $l_j(u)=|G_j(u)-y_j|^2$, and Theorem \ref{thm:locopt} below shows that all $l_i(u)$ decay like $1/{t}$ uniformly, assuming that $\|R(t)\|_1$ decays faster than $1/t$. 

\subsection{Optimization performance}
We investigate the convergence of $\bar{u}_t$ to $u^*$ in terms of the corresponding loss function values under the following assumption: 
\begin{aspt}
\label{aspt:uloc}
Suppose  the following holds for some $\alpha\geq 0, R_\alpha\geq 0, L>0$
\begin{equation}
\label{aspt:error}
\|R(t)\|\leq R_\alpha \|C^{uu}(t)\|^{1+\alpha}_{\max},
\end{equation}
\begin{equation}
\label{aspt:regu1}
\|\nabla G (\ubar(t))^\top  \Cloc^{uu}  \nabla G(\ubar(t))\|\leq L \|C^{uu}(t)\|_{\max},
\end{equation}
\begin{equation}
\label{aspt:regu2}
\|G(\ubar(t))-\Gbar(t)\|^2\leq d_yL^2 \|C^{uu}(t)\|_{\max}. 
\end{equation}
\end{aspt}
Specifically, condition \eqref{aspt:error} requires the error matrix $R(t)$ to be smaller than the LEKI ensemble covariance. Given Theorems \ref{thm:max} and \ref{thm:min}, this mean that $\|R(t)\|$ must decays faster than $1/t$. Conditions \eqref{aspt:regu1} and \eqref{aspt:regu2} are regularity assumptions on the observation map $G$. 
Assumption \ref{aspt:uloc} is directly connected to the localization scheme we use within LEKI
and we will explain that further below, because the connections between
Assumption \ref{aspt:uloc} and localization re-appear (slightly modified) when discussing uniform convergence in Section~\ref{sec:UniformConvergence}.

\begin{thm}
\label{thm:globalloss}
Under Assumptions \ref{aspt:obs}, \ref{aspt:regu} and \ref{aspt:uloc}, suppose the loss function $f$ is $c$-strongly convex so \eqref{eq:strong} holds. Using $M_C$ and $m_c$ from Theorems \ref{thm:max} and \ref{thm:min}, we let
\[
c_\psi=\frac12m_c c\psi_0-2R_\alpha M_C1_{\alpha=0}. 
\]
Then for any $\epsilon>0$, 
\[
l(\ubar(t))-l(u^*)\lesssim \frac{l(\ubar_0)}{(t+1)^{c_\psi}}+ \frac{d_y}{(t+1)^{\min\{c_\psi,1-\epsilon\}}}\log(t+1)^{1_{c_\psi=1-\epsilon}}.
\]
\end{thm}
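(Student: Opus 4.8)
The plan is to track the shifted loss $\Phi(t):=l(\ubar(t))-l(u^*)\ge 0$ along the mean flow $\tfrac{d}{dt}\ubar(t)=-\Cloc^{up}(t)(\Gbar(t)-y)$ and to reduce everything to a scalar linear differential inequality that can be integrated. Since $\nabla l(u)=2\nabla G(u)^\top(G(u)-y)$, the chain rule gives
\[
\frac{d}{dt}l(\ubar(t))=-2\,(G(\ubar(t))-y)^\top\nabla G(\ubar(t))\,\Cloc^{up}(t)\,(\Gbar(t)-y).
\]
I would then use the \emph{definition} \eqref{eq:Rt} of $R(t)$ to replace $\nabla G(\ubar)\,\Cloc^{up}$ by $\nabla G(\ubar)\,\Cloc^{uu}\nabla G(\ubar)^\top+R(t)$ (this substitution is exact), and split $\Gbar(t)-y=(G(\ubar(t))-y)+(\Gbar(t)-G(\ubar(t)))$. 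Writing $e(t)=G(\ubar(t))-y$ and $\delta(t)=\Gbar(t)-G(\ubar(t))$, this decomposes $\tfrac{d}{dt}l(\ubar(t))$ into one principal term $-2\,e^\top\nabla G(\ubar)\Cloc^{uu}\nabla G(\ubar)^\top e$ plus three error terms carrying a factor $R(t)$ and/or $\delta(t)$.

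The principal term drives convergence. I would lower-bound it with the eigenvalue estimate $\lambda_{\min}(\Cloc^{uu}(t))\ge\tfrac{m_c\psi_0}{t+1}$ from Theorem \ref{thm:min}, obtaining $e^\top\nabla G\Cloc^{uu}\nabla G^\top e\ge\tfrac{m_c\psi_0}{t+1}\|\nabla G(\ubar)^\top e\|^2=\tfrac{m_c\psi_0}{4(t+1)}\|\nabla l(\ubar)\|^2$, and then invoke the strong-convexity (PL) inequality \eqref{eq:strong} to convert this into $\ge\tfrac{m_c\psi_0 c}{4(t+1)}\Phi(t)$. Hence the principal term contributes $-\tfrac{m_c c\psi_0}{2(t+1)}\Phi(t)$, which is exactly the first piece $\tfrac12 m_c c\psi_0$ of the advertised rate $c_\psi$.

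Next I would bound the three error terms using Assumption \ref{aspt:uloc} together with the upper bound $\|C^{uu}(t)\|_{\max}\le M_C/(t+1)$ from Theorem \ref{thm:max}. The term $-2e^\top R(t)e$ is controlled by \eqref{aspt:error}: for $\alpha>0$ it is $O((t+1)^{-(1+\alpha)})\Phi$ and hence negligible against the principal term, whereas for $\alpha=0$ it is of the \emph{same} order $O((t+1)^{-1})\Phi$, contributing at most $+\tfrac{2R_\alpha M_C}{t+1}\Phi(t)$ — this is precisely what removes $2R_\alpha M_C\,1_{\alpha=0}$ from the decay rate and produces $c_\psi$. The two cross terms involving $\delta(t)$ I would estimate with \eqref{aspt:regu1} for $\|\nabla G\Cloc^{uu}\nabla G^\top\|$, with \eqref{aspt:regu2} for $\|\delta(t)\|^2\le d_yL^2\|C^{uu}\|_{\max}$, and with Theorem \ref{thm:max}; each has the form $(\text{small coefficient})\times\sqrt{\Phi}$, and I would apply Young's inequality to split it into a piece absorbed into the principal term (tunable, and charged against the $\delta$-slack already present in $M_C$ and $m_c$) and a pure source of order $d_y/(t+1)^2$.

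Assembling these estimates yields a scalar inequality $\dot\Phi(t)\le-\tfrac{c_\psi}{t+1}\Phi(t)+S(t)$ with $S(t)\lesssim d_y/(t+1)^2$, valid for $t$ beyond the thresholds of Theorems \ref{thm:max} and \ref{thm:min}. Integrating via Duhamel's formula (the comparison principle, Lemma \ref{lem:boundonderivative}) gives
\[
\Phi(t)\lesssim \frac{l(\ubar_0)}{(t+1)^{c_\psi}}+\frac{d_y}{(t+1)^{c_\psi}}\int_{t_0}^{t}(s+1)^{c_\psi-2}\,ds,
\]
and evaluating the integral — which grows like $(t+1)^{c_\psi-1}$, stays bounded, or produces a logarithm according to whether $c_\psi$ exceeds, falls below, or equals the crossover value — yields the stated $d_y(t+1)^{-\min\{c_\psi,1-\epsilon\}}$ decay, with the logarithmic factor switched on exactly at $c_\psi=1-\epsilon$; the $\epsilon$-slack is the accumulated loss from the tunable $\delta$ in $M_C,m_c$ and the Young weights. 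I expect the main obstacle to be the error-term bookkeeping: cleanly isolating the borderline $\alpha=0$ contribution (which must be charged against the rate rather than treated as a source) from the genuinely lower-order terms, and choosing the Young weights so the absorbed pieces degrade $c_\psi$ by no more than the allotted $\epsilon$.
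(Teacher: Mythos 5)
Your proposal is correct and follows essentially the same route as the paper's proof: the identical decomposition of $\tfrac{d}{dt}l(\ubar)$ into a principal term plus error terms via the exact substitution $\nabla G(\ubar)\Cloc^{up}=\nabla G(\ubar)\Cloc^{uu}\nabla G(\ubar)^\top+R(t)$ and the split $\Gbar-y=(G(\ubar)-y)+(\Gbar-G(\ubar))$, the same use of Theorems \ref{thm:min} and \ref{thm:max} with the PL inequality and Young's inequality (which is exactly where the $\epsilon$ enters, making the source term $O(d_y(t+1)^{-(2-\epsilon)})$ rather than $O(d_y(t+1)^{-2})$ as you wrote), and the same final Gronwall-type integration (the paper invokes Lemma \ref{lem:Gronwall} rather than the comparison principle Lemma \ref{lem:boundonderivative}, a cosmetic difference).
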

Note that $\alpha$ controls how well can the Jacobian matrix be approximated. 
If $\alpha>0$, $l(\ubar(t))-l(u^*)\to 0$. If $\alpha=0$, one needs $R_\alpha$ to be sufficiently small for asymptotic convergence.
And if the convexity is strong enough, we have approximately $O(\frac{d_y}{t})$ rate of convergence.  
\begin{proof}
For notational simplicity, we replace $l(u)$ with $l(u)-l(u^*)$ so that $l(u^*)=0$. 
We investigate the dynamics of $l(\ubar(t))$. 
\begin{align*}
\frac{d}{dt} &l(\ubar)=\langle\nabla l(\ubar(t)), \frac{d}{dt} \ubar(t)\rangle\\
&=-\langle\nabla l(\ubar), \Cloc^{up} (\Gbar(t)-y)\rangle\\
&=-2\langle (G(\ubar)-y), \nabla G(\ubar)^\top  \Cloc^{up} (\Gbar(t)-y)\rangle\\
&=-2\langle(G(\ubar)-y), \nabla G (\ubar)^\top  \Cloc^{uu}  \nabla G(\ubar) (\Gbar(t)-y)\rangle+
2\langle(G(\ubar)-y), R(t) (\Gbar(t)-y)\rangle\\
&=-2\langle(G(\ubar)-y), \nabla G (\ubar)^\top  \Cloc^{uu}  \nabla G(\ubar) (G(\ubar)-y)\rangle+
2\langle(G(\ubar)-y), R(t) (G(\ubar)-y)\rangle\\
&\quad-2\langle(G(\ubar)-y), \nabla G (\ubar)^\top  \Cloc^{uu}  \nabla G(\ubar) (\Gbar(t)-G(\ubar))\rangle
+2\langle(G(\ubar)-y), R(t) (G(\ubar)-\Gbar(t))\rangle.
\end{align*}
To continue, we bound each term above. The first one can be bounded by 
\begin{align*}
&-\langle(G(\ubar)-y), \nabla G (\ubar)^\top  \Cloc^{uu}  \nabla G(\ubar) (G(\ubar)-y)\rangle\\
&\quad =-\frac14\langle \nabla l(\ubar), \Cloc^{uu}  \nabla l(\ubar)\rangle\leq -\frac14\lambda_{\min}(\Cloc^{uu}) \|\nabla l(\ubar)\|^2\leq -\frac14c\lambda_{\min}(\Cloc^{uu})l(\ubar).
\end{align*}
The second term can be bounded using \eqref{aspt:error}
\[
|\langle(G(\ubar)-y) , R(t) (G(\ubar)-y)\rangle|\leq \| R(t)\| l(\bar{u})\leq R_\alpha \|C^{uu}\|^{1+\alpha}_{\max}l(\bar{u}).
\]
The third term  can be bounded using \eqref{aspt:regu1}
\begin{align*}
&|\langle(G(\ubar)-y), \nabla G (\ubar)^\top  \Cloc^{uu}  \nabla G(\ubar) (G(\ubar)-\Gbar(t))\rangle|\\
&\leq L\|G(\ubar)-y\|\|G(\ubar)-\Gbar(t)\|\|C^{uu}\|_{\max}\leq \frac12l(\ubar)\|C^{uu}\|^{1+\epsilon}_{\max}+\frac1{2}d_yL^2 \|C^{uu}\|_{\max}^{2-\epsilon}.
\end{align*}
The last term  can be bounded using \eqref{aspt:regu2}
\begin{align*}
|\langle(G(\ubar)-y), R(t) (G(\ubar)-\Gbar(t))\rangle|&\leq \|R(t)\|\|G(\ubar)-y\|\|G(\ubar)-\Gbar(t)\|\\
&\leq \frac12  l(\ubar)\|C^{uu}\|^{1+\epsilon}_{\max}+\frac1{2} d_yM^2_G R^2_\alpha\|C^{uu}\|^{2+2\alpha-\epsilon}_{\max}
\end{align*}
In summary we have
\[
\frac{d}{dt} l(\ubar)\leq -a_tl(\ubar)+b_t,
\]
where
\[
a_t=\frac12c\lambda_{\min}(\Cloc^{uu})-2R_\alpha \|C^{uu}\|^{1+\alpha}_{\max}-2\|C^{uu}\|^{1+\epsilon}_{\max},
\]
\[
b_t=d_y(L^2\|C^{uu}\|^{2-\epsilon}_{\max}+M_G^2\|C^{uu}\|^{2+2\alpha-\epsilon}_{\max}).
\]
By Theorems \ref{thm:max} and \ref{thm:min}, there is a certain $t_2>0$, when $t\geq t_2$,
$a_t\geq \frac{c_\psi}{1+t}$ while $b_t\lesssim \frac{d_y}{(1+t)^{2-\epsilon}}$.
Then we apply a Gronwall's inequality on $l(\ubar)$, Lemma \ref{lem:Gronwall} to obtain the final result. 
\end{proof}

\subsection{Uniform convergence with localized observations}
\label{sec:UniformConvergence}
In many inverse problems involve spatial models, we are interested in knowing whether the recovery is good uniformly over the space. 
To discuss this issue, we consider a simplified setting that has as many observations as parameters ($d_y=d_u$), and each $G_i(u)$ depends only on variables near $u_i$ (which we write as $u_I$).  In this context, it is natural to consider the data misfit of $G_i$, i.e. 
\[
l_i(u)= |G_i(u_I)-y_i|^2
\]
We now study if LEKI can minimize all $l_i$ uniformly over all $i$. 

As before, we assume that there is  a unique minimizer, $u^*$, of the loss function $l(u)=\sum_{i=1}^{d_u} l_i(u)$,
and, for this $u^*$, we have $\nabla l_i(u^*)=0$ for all $i$.
Since each $l_i$ depends only on variables ``near'' $u_i$, 
we re-define $c$-strong convexity locally.
\begin{aspt}
\label{aspt:lconvex}
There is a constant $c>0$, so that each $l_i(u)$ is locally $c$-strongly convex:
\[
\|\nabla l_i(u)\|^2 \geq c (l_i(u)-l_i(u^*)),\quad i=1,\ldots, d_u. 
\]
\end{aspt}
Note that if $l_i$ is $c$-strongly convex in the standard sense, then it is also locally $c$-strongly convex. But locally strongly convex can be more general. In particular, if $l_i$ only depends on some part of components, say $l_i(u)=l_i(u_I)$, then for Assumption \ref{aspt:lconvex} to hold, $l_i(u)$ only needs to be $c$-strongly convex in $u_I$ but not in all components.

For our proof of uniform convergence, we modify Assumption \ref{aspt:uloc}.
Recall that Assumption \ref{aspt:uloc} is used to show convergence in $l_2$ norms,
so that the conditions involve in $l_2$ and $l_2$-operator norms. For uniform convergence,
we formulated the assumption in $l_\infty$-related norms and the associated operator norms. 
\begin{aspt}
\label{aspt:uloc2}
Suppose  the following holds for some $\alpha\geq 0, R_\alpha\geq 0, L>0$
\begin{equation}
\label{aspt:regu12}
\nabla G_j (\ubar)^\top  \Cloc^{uu}  \nabla G_j(\ubar)\leq L \|C^{uu}(t)\|_{\max},\quad \forall j=1,\ldots, d_y,
\end{equation}
\begin{equation}
\label{aspt:regu22}
|G_j(\ubar)-\Gbar_j(t)|^2\leq L^2 \|C^{uu}(t)\|_{\max},\quad \forall j=1,\ldots, d_y. 
\end{equation}
\end{aspt}

The final assumption we need is that $l_i$  decorrelates with $l_j$ when $i$ and $j$ are far apart
(this is one of the essential assumptions for localization to work). 
The correlation between two functions $l_i(u)$ and $l_j(u)$ can be measured by 
$\nabla l_i(u)^\top  C_u\nabla l_j(u)$, where $C_u$ is the covariance matrix of random vector $u$. In the context of LEKI, it is natural to replace $C_u$ with estimator $\Cloc^{uu}$ and make the following assumption: 
\begin{aspt}
\label{aspt:ploc}
There is a symmetric matrix $\phi\in \reals^{d_u\times d_u}$ and $R_\alpha\geq 0$
such that the following hold
\begin{equation}
\label{aspt:error2}
[R(t)]_{i,j}\leq R_\alpha \phi_{i,j}\|C^{uu}(t)\|^{1+\alpha}_{\max},
\end{equation}
\[
\frac12\phi_{i,j}\left(\|\nabla G_j(u)\|^2_{\Cloc^{uu}}+\|\nabla G_i(u)\|^2_{\Cloc^{uu}}\right)
\geq  \nabla G_j^\top  \Cloc^{uu}\nabla G_i.
\]
Moreover, we assume there is a $\phi_0>0$ so that  $\sum_{j\neq i}\phi_{i,j}<1-\phi_0.$ 
\end{aspt}
We remark that this condition can be difficult to check in general,
but if each observation concerns only one component, i.e. $l_j(u)=l_j(u_j)$, Assumption \ref{aspt:ploc} holds with $\phi_{i,j}=\Psi_{i,j}$, because we can apply Cauchy Schwarz using
\[
\|\nabla l_j(u)\|^2_{\Cloc^{uu}}=|\dot {l}_j(u_j)|^2C^{uu}_{j,j},\quad  \nabla l_j^\top  \Cloc^{uu}\nabla l_i=\dot {l}_j(u_j)\dot {l}_i(u_i)C^{uu}_{i,j}\Psi_{i,j}. 
\]

We can now state and prove the theorem about uniform convergence of LEKI.

\begin{thm}
\label{thm:locopt}
Suppose all $l_j(u)$ are $c$-strongly convex and the localization matrix $\Psi$ satisfies $\lambda_{\min}(\Psi)\geq \psi_0$,  under Assumptions \ref{aspt:obs}, \ref{aspt:regu},  \ref{aspt:lconvex}, \ref{aspt:uloc2}, and \ref{aspt:ploc}, the following holds:
\[
\max_j |l_j(\ubar(t))-l_j(u^*)|\lesssim \frac{\max_j|l_j(\ubar(0))-l_j(u^*)|}{(t+1)^{c_\psi}}+\frac{1}{(t+1)^{\min\{c_\psi,1-\epsilon\}}}
\]
where 
\[
c_\psi=\frac14c\psi_0\phi_0 m_c-9R_\alpha  M_C1_{\alpha=0}. 
\]
\end{thm}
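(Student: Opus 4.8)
The plan is to follow the template of Theorem~\ref{thm:globalloss}, but to track each local misfit $p_j(t):=l_j(\ubar(t))-l_j(u^*)$ separately and to work throughout with $\ell_\infty$/max-type quantities, so that the decorrelation hypothesis in Assumption~\ref{aspt:ploc} can be used to decouple the components. Since $\nabla l_j(u^*)=0$ and each $l_j$ is convex, $u^*$ minimizes every $l_j$ and $p_j\ge 0$, so it suffices to bound $P(t):=\max_j p_j(t)$ and then invoke a scalar comparison argument.

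First I would differentiate a single misfit along the LEKI flow \eqref{eq:ctlEKI}. Using $\tfrac{d}{dt}\ubar=-\Cloc^{up}(\Gbar-y)$ and $\nabla l_j(\ubar)=2(G_j(\ubar)-y_j)\nabla G_j(\ubar)$, I get $\tfrac{d}{dt}l_j(\ubar)=-\langle\nabla l_j(\ubar),\Cloc^{up}(\Gbar-y)\rangle$. I would then insert the error matrix from \eqref{eq:Rt} to replace $\nabla G(\ubar)\Cloc^{up}$ by $\nabla G(\ubar)\Cloc^{uu}\nabla G(\ubar)^\top$ up to $R(t)$, and split $\Gbar-y=(G(\ubar)-y)+(\Gbar-G(\ubar))$. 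Because $2(G_j(\ubar)-y_j)\nabla G_j=\nabla l_j$, the leading term becomes the quadratic form $-\tfrac12\sum_k \nabla l_j^\top\Cloc^{uu}\nabla l_k$, while the remainder splits into an $R$-contribution (controlled entrywise by \eqref{aspt:error2}) and a $\Gbar-G(\ubar)$-contribution (controlled by \eqref{aspt:regu12} and~\eqref{aspt:regu22}).

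Next I would isolate the diagonal $-\tfrac12\|\nabla l_j\|^2_{\Cloc^{uu}}$ of the quadratic form and bound the off-diagonal couplings $\nabla l_j^\top\Cloc^{uu}\nabla l_k$, $k\ne j$, by the decorrelation bound of Assumption~\ref{aspt:ploc} applied to $\nabla l_j=2(G_j(\ubar)-y_j)\nabla G_j$, i.e.\ $|\nabla l_j^\top\Cloc^{uu}\nabla l_k|\le\tfrac12\phi_{j,k}(\|\nabla l_j\|^2_{\Cloc^{uu}}+\|\nabla l_k\|^2_{\Cloc^{uu}})$, which reduces to Cauchy--Schwarz in the $\Cloc^{uu}$-inner product in the single-component case highlighted after the assumption. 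Summing the $\phi_{j,k}$ and invoking $\sum_{k\ne j}\phi_{j,k}<1-\phi_0$ leaves a net diagonal dissipation proportional to $\phi_0\|\nabla l_j\|^2_{\Cloc^{uu}}$ together with a coupling to the other components' weighted gradients. I would then convert the surviving diagonal term into $p_j$ using local $c$-strong convexity (Assumption~\ref{aspt:lconvex}), $\|\nabla l_j\|^2\ge c\,p_j$, together with $\lambda_{\min}(\Cloc^{uu})\ge m_c\psi_0/(t+1)$ from Theorem~\ref{thm:min}; the error and coupling pieces I would bound using \eqref{aspt:regu12} and~\eqref{aspt:regu22} and the upper bound $\|C^{uu}\|_{\max}\le M_C/(t+1)$ from Theorem~\ref{thm:max}, collecting them into a forcing term of size $\lesssim (t+1)^{-(2-\epsilon)}$, except that when $\alpha=0$ the $R$-term is of the same order $1/(t+1)$ as the dissipation and must instead be subtracted from the rate, producing the $-9R_\alpha M_C 1_{\alpha=0}$ in $c_\psi$.

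Finally I would pass to the maximum over $j$: at an index $j^\star$ attaining $P(t)=p_{j^\star}(t)$ the derivative of $P$ equals $\tfrac{d}{dt}p_{j^\star}$ almost everywhere, and the incoming coupling, controlled by $\sum_{k\ne j^\star}\phi_{j^\star,k}<1-\phi_0$ against $p_k\le P$, is dominated by the diagonal dissipation, yielding the scalar differential inequality $\tfrac{d}{dt}P\le-\tfrac{c_\psi}{t+1}P+b_t$ with $c_\psi=\tfrac14 c\psi_0\phi_0 m_c-9R_\alpha M_C 1_{\alpha=0}$; applying the Gronwall estimate (Lemma~\ref{lem:Gronwall}) then gives the stated bound. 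The main obstacle is precisely this decoupling-at-the-maximum step: in Theorem~\ref{thm:globalloss} one simply sums over $j$, and symmetry of $\phi$ turns $1+\phi_0-(1-\phi_0)$ into the clean factor $\phi_0$, whereas for the maximum I must argue that at $j^\star$ the off-diagonal correlations cannot overwhelm the per-component decay. Keeping these couplings in the $\Cloc^{uu}$-weighted-gradient form, rather than converting them through $\|C^{uu}\|_{\max}$ (which would spoil both the constant and the order of the forcing), is the delicate point that lets the $\phi_0$ gap survive into $c_\psi$.
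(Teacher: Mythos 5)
Your derivative computation and term-by-term bounds (the $\mathcal{A}$/$\mathcal{B}$ splitting via $R(t)$, the decorrelation bound from Assumption \ref{aspt:ploc}, the treatment of the $\Gbar-G(\ubar)$ terms) match the paper's proof. But the final step — passing to $P(t)=\max_j p_j(t)$ and claiming a scalar inequality $\tfrac{d}{dt}P\le -\tfrac{c_\psi}{t+1}P+b_t$ at the maximizing index $j^\star$ — has a genuine gap. After the decomposition, the off-diagonal coupling entering the inequality for $\tfrac{d}{dt}l_{j^\star}$ is of the form $\tfrac14\sum_{i\neq j^\star}\phi_{i,j^\star}\|\nabla l_i(\ubar)\|^2_{\Cloc^{uu}}$: it is expressed in \emph{gradient} norms, not in the misfits $p_i$. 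The fact that $j^\star$ maximizes $p_j$ gives you no control of $\|\nabla l_i\|^2_{\Cloc^{uu}}$ for $i\neq j^\star$ relative to the diagonal dissipation $-(\tfrac14+\tfrac14\phi_0)\|\nabla l_{j^\star}\|^2_{\Cloc^{uu}}$, because strong convexity (Assumption \ref{aspt:lconvex}) only bounds gradients from \emph{below} by function gaps; the index maximizing the misfit need not maximize the gradient norm. The only way to compare the coupling against $P$ is to convert gradients to misfits via the upper bound $\|\nabla l_i\|^2_{\Cloc^{uu}}=4l_i\,\nabla G_i^\top\Cloc^{uu}\nabla G_i\le 4L\|C^{uu}\|_{\max}\,l_i$ from \eqref{aspt:regu12}, but then the coupling is of size $L M_C(1-\phi_0)P/(t+1)$ while the dissipation (converted through the \emph{lower} bounds of Theorem \ref{thm:min}) is only $c\,m_c\psi_0 P/(t+1)$ up to constants; since $LM_C$ can vastly exceed $c\,m_c\psi_0$, the dissipation does not dominate and the stated $c_\psi$ does not survive. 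You identified this conversion as the thing to avoid, yet your max argument implicitly requires it.

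The paper resolves exactly this obstruction differently: instead of taking a maximum, it builds weighted Lyapunov functions $L_k(u)=\sum_j v^k_j\,(l_j(u)-l_j(u^*))$, where the weights $v^k_j$ are constructed in Lemma \ref{lem:vexist} (via expected occupation times of a Markov chain with transition probabilities proportional to $\phi_{i,j}$, an idea from the EnKF analysis literature) and satisfy the sub-invariance property $\sum_{l\neq j}\phi_{j,l}v^k_l\le v^k_j$ together with $v^k_k\ge\phi_0$ and $\sum_j v^k_j\le1$. Summing the coupled inequalities with these weights makes the coefficient of \emph{every} gradient term $\|\nabla l_i(\ubar)\|^2_{\Cloc^{uu}}$ nonpositive before any conversion to function values; only then is strong convexity applied, in its valid (lower-bound) direction, yielding $\tfrac{d}{dt}L_k\le -a_tL_k+b_t$ with the stated rate, after which Lemma \ref{lem:Gronwall} and $L_k\ge \phi_0\,(l_k-l_k(u^*))$, $L_k(0)\le\max_j(l_j(\ubar(0))-l_j(u^*))$ give the uniform bound. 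To repair your proof you would need to replace the decoupling-at-the-maximum step by this (or an equivalent) weighted-sum construction.
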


\begin{proof}
We investigate the dynamics of  $l_j(u)=|G_j(u)-y_j|^2$, which follows
\begin{align*}
\frac{d}{dt} l_j(\ubar)&=\langle\nabla l_j(\ubar), \frac{d}{dt} \ubar\rangle\\
&=-\langle\nabla l_j(\ubar), \Cloc^{up} (\Gbar(t)-y)\rangle\\
&=-2 (G_j(\ubar)-y_j) \nabla G_j(\ubar)^\top \Cloc^{up} (\Gbar(t)-y) =2\mathcal{A}+2\mathcal{B}.
\end{align*}
Here we let
\[
 \mathcal{A}=-(G_j(\ubar)-y_j) R_{j,\cdot}(\Gbar(t)-y),
\]
where $R(t)$ is the matrix defined in \eqref{eq:Rt} and $R_{j,\cdot}$ is its $j$th row, 
and
\begin{align*}
\mathcal{B}&=-(G_j(\ubar)-y_j) \nabla G_j(\ubar)^\top \Cloc^{uu}\nabla G (\ubar)(\Gbar(t)-y)\\
&=-\frac14\langle\nabla l_j(\ubar), \Cloc^{uu} \nabla l_j(\ubar)\rangle-\sum_{i\neq j} (G_j(\ubar)-y_j)(G_i(\ubar)-y_i)\nabla G_j(\ubar)^\top  \Cloc^{uu} \nabla G_i(\ubar)\\
&\quad-\sum_{i\neq j}(G_j(\ubar)-y_j)(\Gbar_i(t)-G_i(\ubar))\nabla G_j(\ubar)^\top \Cloc^{uu}\nabla G_i (\ubar).
\end{align*}
We use the following to bound $\mathcal{A}$
\begin{align*}
&|(G_j(\ubar)-y_j) R_{j,\cdot}(\Gbar(t)-y)|\\
&\leq|\sum_{i=1}^{d_u} R_{j,i}(G_j(\ubar)-y_j)(G_i(\ubar)-y_i)|+|\sum_{i=1}^{d_u} R_{j,i}(G_j(\ubar)-y_j)(\Gbar_i-G_i(\ubar))|\\
&\leq \sum_{i=1}^{d_u} |R_{j,i}|\sqrt{l_il_j}+\sum_{i=1}^{d_u} R_{j,i}  \sqrt{l_j}  L\|C^{uu}\|_{\max}\\
&\leq \sum_{i=1}^{d_u}|R_{j,i}|(l_i+l_j)+\sum_{i=1}^{d_u} |R_{j,i}| l_j+L^2\|C^{uu}\|_{\max}^2\sum_{i=1}^{d_u} |R_{j,i}|\\
&\leq R_\alpha\|C^{uu}\|^{1+\alpha}_{\max} \left(3l_j+\sum_{i\neq j}\phi_{j,i}l_i\right)+L^2R_\alpha \|C^{uu}\|_{\max}^{3+\alpha}.
\end{align*}
To bound $\mathcal{B}$, we apply Assumption \ref{aspt:ploc} to the second term in the decomposition of $\mathcal{B}$ and find
\begin{align*}
&- (G_j(\ubar)-y_j)(G_i(\ubar)-y_i)\nabla G_j(\ubar)^\top  \Cloc^{uu} \nabla G_i(\ubar)\\
&\leq \sqrt{l_j l_i} |\nabla G_j(\ubar)^\top  \Cloc^{uu} \nabla G_i(\ubar)|\\
&\leq  \phi_{i,j}\sqrt{l_j l_i} \|\nabla G_j\|_{\Cloc^{uu}}\|\nabla G_i\|_{\Cloc^{uu}}\\
& \leq  \frac18\phi_{i,j}(\|\nabla l_j(\ubar)\|^2_{\Cloc^{uu}}+\|\nabla l_i(\ubar)\|^2_{\Cloc^{uu}}).
\end{align*}
To bound the third term in the decomposition of $\mathcal{B}$, we first note that
\begin{align*}
&|\langle(G_j(\ubar)-y_j),\nabla G_j(\ubar)^\top \Cloc^{uu}\nabla G_i (\ubar)(\Gbar_i(t)-G_i(\ubar))\rangle|\\
&\leq \sqrt{l_j}|\Gbar_i(t)-G_i(\ubar)||\nabla G_j^\top \Cloc^{uu}\nabla G_i |\leq \phi_{i,j} \sqrt{l_j}|\Gbar_i(t)-G_i(\ubar)|\|\nabla G_j\|_{\Cloc^{uu}}\|\nabla G_i\|_{\Cloc^{uu}}\\
&\leq \frac12\phi_{i,j} \sqrt{L}\|\nabla l_j\|_{\Cloc^{uu}}\| \|\Cloc^{uu}\|^{1/2}_{\max}|\Gbar_i(t)-G_i(\ubar)|,
\end{align*}
where in the last line we used the fact that $\nabla l_j=2\sqrt{l_j}\nabla G_j$ and
\[
\|\nabla G_i\|^2_{\Cloc^{uu}}=[\nabla G^\top \Cloc^{uu}\nabla G]_{i,i}\leq L\|\Cloc^{uu}\|_{\max}
\]
by \eqref{aspt:regu1}. This leads to 
\begin{align*}
&\sum_{i\neq j}|G_j(\ubar)-y_j)(\Gbar_i(t)-G_i(\ubar))\nabla G_j(\ubar)^\top \Cloc^{uu}\nabla G_i (\ubar)|\\
&\leq \frac12 \sqrt{L}\sum_{i\neq j}\phi_{i,j} \|\nabla l_j\|_{\Cloc^{uu}}\| \|\Cloc^{uu}\|^{1/2}_{\max}|\Gbar_i(t)-G_i(\ubar)|\\
&\leq \frac12 L\sum_{i\neq j}\phi_{i,j} \|\nabla l_j\|_{\Cloc^{uu}}\|  \|C^{uu}\|_{\max}\\
&\leq \frac12 L\|\nabla l_j\|_{\Cloc^{uu}}\|\|C^{uu}\|^{1}_{\max}\leq \|\nabla l_j\|^2_{\Cloc^{uu}} \|C^{uu}\|^{\epsilon}_{\max}+L^2\|C^{uu}\|^{2-\epsilon}_{\max}.
\end{align*}
%
Putting all these estimates back into the decomposition of $\mathcal{B}$,  we find
\begin{align*}
&-\langle(G_j(\ubar)-y_j) , \nabla G_j(\ubar)^\top \Cloc^{uu}\nabla G (\ubar)(\Gbar(t)-y)\rangle\\
&\leq 
(-\frac14+\|C^{uu}\|^\epsilon_{\max})\|\nabla l_j(\ubar)\|^2_{\Cloc^{uu}}+\frac18\sum_{i\neq j}\phi_{i,j}\left(\|\nabla l_j(\ubar)\|^2_{\Cloc^{uu}}+\|\nabla l_i(\ubar)\|^2_{\Cloc^{uu}}\right)+ L^2\|\Cloc^{uu}\|^{2-\epsilon}_{\max}\\
&\leq 
(-\frac18-\frac18\phi_0+\|C^{uu}\|^\epsilon_{\max})\|\nabla l_j(\ubar)\|^2_{\Cloc^{uu}}+\frac18\sum_{i\neq j}\phi_{i,j}\|\nabla l_i(\ubar)\|^2_{\Cloc^{uu}}+ L^2\|\Cloc^{uu}\|^{2-\epsilon}_{\max}.
\end{align*}
In summary,
\begin{align*}
\frac{d}{dt} l_j(\ubar)
&=2\mathcal{A}+2\mathcal{B}\\
&\leq (-\frac14-\frac14\phi_0+2\|C^{uu}\|^\epsilon_{\max})\|\nabla l_j(\ubar)\|^2_{\Cloc^{uu}}+\frac14\sum_{i\neq j}\phi_{i,j}\|\nabla l_i(\ubar)\|^2_{\Cloc^{uu}}+ 2L^2\|\Cloc^{uu}\|^{2-\epsilon}_{\max}\\
&\quad\quad +2R_\alpha\|C^{uu}\|^{1+\alpha}_{\max} \left(3l_j+\sum_{i\neq j}\phi_{j,i}l_i\right)+2L^2R_\alpha \|C^{uu}\|_{\max}^{3+\alpha}.
\end{align*}
A technical Lemma \ref{lem:vexist} indicates that there is a set of numbers $v^k_j$ such that 
\begin{enumerate}[1)]
\item $v^{k}_j\geq 0, \forall j$ and in specific $v^i_i\geq 1-\phi_0$.
\item For all index $j$, $\sum_{l\neq j} \phi_{j,l}v^i_l\leq v^i_j$.
\item $\sum_{j=1}^{N_x} v^i_j\leq 1$. 
 \end{enumerate}
Now let $L_k(u)=\sum_{j=1}^{d_u} v^k_{j} (l_j(u)-l_j(u^*))$. Then using the properties of $v^k_j$ above
\begin{align*}
\frac{d}{dt} L_k(\bar{u})&\leq \sum_{j=1}^{d_u} v^k_j\frac{d}{dt} l_j(\ubar)\\
&\leq \sum_{j=1}^{d_u}v^k_j\bigg((-\frac14-\frac14\phi_0+2\|C^{uu}\|^\epsilon_{\max})\|\nabla l_j(\ubar)\|^2_{\Cloc^{uu}}+\frac14\sum_{i\neq j}\phi_{i,j}\|\nabla l_i(\ubar)\|^2_{\Cloc^{uu}}\\
&\quad\quad\quad+ 2L^2\|\Cloc^{uu}\|^{2-\epsilon}_{\max}
 +2R_\alpha\|C^{uu}\|^{1+\alpha}_{\max} \bigg(3l_j+\sum_{i\neq j}\phi_{j,i}l_i\bigg)+2L^2R_\alpha \|C^{uu}\|_{\max}^{3+\alpha}\bigg)\\
&= \sum_{i=1}^{d_u}\bigg((-\frac14 v^k_i-\frac14\phi_0 v_i^k+\frac14\sum_{j\neq i} \phi_{i,j} v^k_j+2\|C^{uu}\|^\epsilon_{\max}v_i^k)\|\nabla l_i(\ubar)\|^2_{\Cloc^{uu}}\\
&\quad\quad\quad + 2L^2v^k_j\|\Cloc^{uu}\|^{2-\epsilon}_{\max}+R_\alpha\|C^{uu}\|^{1+\alpha}_{\max} \bigg(6v^k_i+3\sum_{j\neq i}\phi_{i,j}v^k_j\bigg)l_i+2L^2v^k_iR_\alpha \|C^{uu}\|_{\max}^{3+\alpha}\bigg)\\
&\leq \sum_{i=1}^{d_u} \bigg((- 2\phi_0+2\|C^{uu}\|^\epsilon_{\max})v^k_i \|\nabla l_i(\ubar)\|^2_{\Cloc^{uu}}+9R_\alpha \|C^{uu}\|^{1+\alpha}_{\max}l_i\bigg)\\
&\quad\quad\quad+
2L^2(\|C^{uu}\|_{\max}^{2-\epsilon}+R_\alpha \|C^{uu}\|_{\max}^{3+\alpha}).
\end{align*}
We note that by strong convexity,
\[
\|\nabla l_i(\ubar)\|^2_{\Cloc^{uu}}\geq l_i(\ubar) \Rightarrow \sum_i v_i^k\|\nabla l_i(\ubar)\|^2_{\Cloc^{uu}}\geq 
\sum_i v_i^k c l_i(\ubar)\lambda_{\min}(\Cloc^{uu}). 
\]
So we can further deduce that 
\begin{align*}
\frac{d}{dt} L_k(\bar{u})&\leq \sum_{i=1}^{d_u} \bigg(- 2c\phi_0\lambda_{\min}(\Cloc^{uu})+2c\|C^{uu}\|^\epsilon_{\max}\lambda_{\min}(\Cloc^{uu})+9R_\alpha \|C^{uu}\|_{\max}^{1+\alpha}\bigg)v^k_i l_i\\
&\quad\quad+
2L^2(\|C^{uu}\|_{\max}^{2-\epsilon}+R_\alpha \|C^{uu}\|_{\max}^{3+\alpha})\\
&=  \bigg( - \frac14c\phi_0\lambda_{\min}(\Cloc^{uu})+2c\|C^{uu}\|^\epsilon_{\max}\lambda_{\min}(\Cloc^{uu})+9R_\alpha \|C^{uu}\|_{\max}^{1+\alpha}\bigg)L_k\\
&\quad\quad+
2L^2(\|C^{uu}\|_{\max}^{2-\epsilon}+R_\alpha \|C^{uu}\|_{\max}^{3+\alpha}).
\end{align*} 
In summary we have
\[
\frac{d}{dt} L_k(\ubar)\leq -a_tL_k(\ubar)+b_t,
\]
where
\[
a_t=\frac14c\phi_0\lambda_{\min}(\Cloc^{uu})-2c\|C^{uu}\|^\epsilon_{\max}\lambda_{\min}(\Cloc^{uu})-9R_\alpha \|C^{uu}\|_{\max}^{1+\alpha},
\]
and 
\[
b_t=2L^2(\|C^{uu}\|_{\max}^{2-\epsilon}+R_\alpha \|C^{uu}\|_{\max}^{3+\alpha}). 
\]
By Theorems \ref{thm:max} and \ref{thm:min}, we find that there is a threshold time $t_2$ so that for all $t\geq t_2$
\[
a_t\geq \frac{c\psi_0\phi_0 m_c}{4(t+1)} -\frac{9R_\alpha  M_C}{(1+t)}1_{\alpha=0}=\frac{c_\psi}{t+1}.
\]
while
\[
b_t\leq \frac{2L^2 M_C^{2-\epsilon}}{(1+t)^{2-\epsilon}}. 
\]
So  we can apply a Gronwall's inequality, Lemma \ref{lem:Gronwall}, to obtain 
\[
L_k(t)\lesssim \frac{L_k(0)}{(t+1)^{c_\psi}}+\frac{1}{(t+1)^{\min \{c_\psi,1-\epsilon\}}}.
\]
Finally, we note that 
\[
L_k(t)\geq v_k^k l_k(t)\geq \phi_0l_k(t),\quad \max_{j}l_j(0)\geq L_k(0).
\]
So an upper bound of $L_k(t)$ leads to an upperbound of $l_k(t)$. 

\end{proof}

\subsection{Connecting Assumptions \ref{aspt:uloc} and \ref{aspt:uloc2} to localization}
\label{sec:Rcond}
We explain Assumptions \ref{aspt:uloc} and \ref{aspt:uloc2} and describe
how localization can be used to satisfy these assumptions and, for that reason,
obtain convergence guarantees for LEKI.
Each assumption contains three conditions,
\eqref{aspt:error},  \eqref{aspt:regu1} and \eqref{aspt:regu2} (for Assumption  \ref{aspt:uloc}),
and \eqref{aspt:error2}, \eqref{aspt:regu12} and \eqref{aspt:regu22} (for Assumption  \ref{aspt:uloc2})

We start with the regularity conditions \eqref{aspt:regu1} and \eqref{aspt:regu2}, or  \eqref{aspt:regu12} and \eqref{aspt:regu22} (uniform convergence).
These conditions do not use $\Cloc^{up}$, so that the following discussion holds for both the centralized and linearized localization schemes.
\begin{lem}
Suppose that there is a matrix $Q\in \reals^{d_u\times d_y}$ so that 
\[
|\partial_{u_i} G_j(u)|\leq Q_{i,j}.
\]
Then \eqref{aspt:regu1} and \eqref{aspt:regu12} hold with  $L\leq \|\Psi\|\|Q\|^2$.
\end{lem}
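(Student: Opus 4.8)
The plan is to deduce both \eqref{aspt:regu1} and \eqref{aspt:regu12} from a single spectral-norm bound on the $d_y\times d_y$ matrix $\nabla G(\ubar)^\top \Cloc^{uu}\nabla G(\ubar)$, and then to split that bound into a factor governed by $Q$ and a factor governed by the localization matrix $\Psi$. Write $A=\nabla G(\ubar)$ for the $d_u\times d_y$ Jacobian, so that $A_{i,j}=\partial_{u_i}G_j(\ubar)$ and the hypothesis reads $|A_{i,j}|\le Q_{i,j}$ with $Q$ entrywise nonnegative. Since the $j$-th diagonal entry of the symmetric matrix $A^\top\Cloc^{uu}A$ equals $\nabla G_j(\ubar)^\top\Cloc^{uu}\nabla G_j(\ubar)$, and since every diagonal entry of a symmetric matrix is bounded in absolute value by its $l_2$ operator norm, the scalar bounds \eqref{aspt:regu12} follow immediately from the operator-norm bound \eqref{aspt:regu1}. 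Thus it suffices to estimate $\|A^\top\Cloc^{uu}A\|$.

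First I would use submultiplicativity of the operator norm together with $\|A^\top\|=\|A\|$ to write $\|A^\top\Cloc^{uu}A\|\le \|A\|^2\,\|\Cloc^{uu}\|$, and then bound the two factors separately. For the localized covariance I would invoke the Schur-multiplier bound: since $C^{uu}$ is positive semidefinite, the map $X\mapsto C^{uu}\circ X$ has operator norm exactly $\max_i C^{uu}_{i,i}$, whence
\[
\|\Cloc^{uu}\|=\|C^{uu}\circ\Psi\|\le \Big(\max_i C^{uu}_{i,i}\Big)\|\Psi\|=\|C^{uu}\|_{\max}\,\|\Psi\|,
\]
using the identity $\|C^{uu}\|_{\max}=\max_i C^{uu}_{i,i}$ valid for the positive semidefinite matrix $C^{uu}$.

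For the Jacobian factor I would use that the spectral norm is monotone under entrywise absolute-value domination by a nonnegative matrix. Concretely, $\|A\|\le\||A|\|$ because $|(Ax)_i|\le (|A|\,|x|)_i$ componentwise while $\||x|\|=\|x\|$, and $\||A|\|\le\|Q\|$ because $|A|\le Q$ entrywise with both matrices nonnegative and the leading singular vector of a nonnegative matrix may be taken nonnegative. Hence $\|A\|^2\le\|Q\|^2$, and combining the two factors gives $\|A^\top\Cloc^{uu}A\|\le \|\Psi\|\,\|Q\|^2\,\|C^{uu}\|_{\max}$, which is precisely \eqref{aspt:regu1} (and therefore \eqref{aspt:regu12}) with $L=\|\Psi\|\,\|Q\|^2$.

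The routine assembly is the easy part; the two ingredients that actually carry the argument are the identification of the Schur-multiplier norm of a positive semidefinite matrix with its largest diagonal entry, and the entrywise monotonicity of the spectral norm. Both are standard matrix-analytic facts, and I expect the only real care needed is to apply the Schur-multiplier inequality with $C^{uu}$ (rather than $\Psi$) as the multiplier, since that is what converts the $\|C^{uu}\|$ one would naively obtain into the desired $\|C^{uu}\|_{\max}$; the remaining $\|\Psi\|$ factor then matches the claimed constant.
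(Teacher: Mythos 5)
Your proof is correct, but it takes a genuinely different route from the paper's. The paper argues directly on the quadratic form: it expands $y^\top \nabla G(\ubar)^\top \Cloc^{uu}\nabla G(\ubar)y$ entrywise, bounds $|\partial_{u_m}G_i|\leq Q_{i,m}$ and $|C^{uu}_{m,n}|\leq \|C^{uu}\|_{\max}$ inside the quadruple sum, and reassembles the result as $\|C^{uu}\|_{\max}\, y^\top Q^\top \Psi Q y \leq \|C^{uu}\|_{\max}\|\Psi\|\|Qy\|^2$, then specializes $y$ to basis vectors for \eqref{aspt:regu12} exactly as you do. You instead factor the operator-norm bound into three modular facts: submultiplicativity, the Schur-multiplier theorem ($\|C^{uu}\circ \Psi\|\leq (\max_i C^{uu}_{i,i})\|\Psi\|$ for positive semidefinite $C^{uu}$), and entrywise monotonicity of the spectral norm over nonnegative matrices. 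Each approach buys something: your decomposition is sign-safe — it never needs the entries of $\Psi$ or of the Jacobian to have any particular sign, whereas the paper's term-by-term domination is only literally valid after inserting absolute values and implicitly using $\Psi_{m,n}\geq 0$ (harmless for standard localization functions, but a real gap for a signed $\Psi$). The price is that you invoke the nontrivial (though classical) Schur-multiplier theorem and the positive semidefiniteness of $C^{uu}$ (which does hold, as it is a sample covariance), while the paper's computation is elementary and self-contained. Two minor remarks: your Perron--Frobenius aside for $\||A|\|\leq \|Q\|$ is unnecessary — for entrywise $0\leq B\leq C$ one has $\|Bx\|\leq \|B|x|\,\|\leq \|C|x|\,\|\leq \|C\|$ directly; and only the inequality direction of the Schur-multiplier norm identity is needed, not the exact value.
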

\begin{proof}
Note that for any $y\in R^{d_y}$, the following upper bound holds
\begin{align*}
y^\top \nabla G (\ubar)^\top  \Cloc^{uu}  \nabla G(\ubar) y&=\sum_{i,j=1}^{d_y}\sum_{m,n=1}^{d_u} y_iy_j\partial_{u_m}G_i(u)\partial_{u_n}G_j(u)C^{uu}_{m,n}\Psi_{m,n}\\
&\leq  \|C^{uu}\|_{\max}\sum_{i,j=1}^{d_y}\sum_{m,n=1}^{d_u} y_iy_jQ_{i,m}Q_{j,n}\Psi_{m,n}\\
&= \|C^{uu}\|_{\max} y^\top Q^\top  \Psi Qy\leq \|C^{uu}\|_{\max} \|\Psi\| \|Qy\|^2.
\end{align*}
Therefore, \eqref{aspt:regu1} hold by definition of $l_2$ operator norm. Choosing $y$ to be the $i$-th Euclidean basis vector leads to \eqref{aspt:regu12}. 
\end{proof}

\begin{lem}
Suppose there are matrices $Q^j$ that dominate the Hessian of $G_j$:
\[
|[H_{G_j}]_{m,n}|\leq Q^j_{m,n}.
\]
Suppose $Q^j$  are sparse so that there is a constant $L_G$ that satisfies:
\[
\sum_{m,n=1}^{d_u}|Q_{m,n}^j|\leq L_G.
\]
Then \eqref{aspt:regu2} and  \eqref{aspt:regu22} hold with $L=L_G$:
\[
\|G(\ubar)-\Gbar(t)\|^2=\sum_{j=1}^{d_y}|\Gbar_j(u)-G_j(\ubar)|^2\leq d_yL^2_G \|C^{uu}\|^2_{\max}. 
\]
\[
|\Gbar_j(u)-G_j(\ubar)|^2\leq L_G^2\|C^{uu}\|_{\max}\quad \text{for all }j.
\]
\end{lem}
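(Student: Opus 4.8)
The plan is to control the nonlinearity gap $\Gbar_j(t)-G_j(\ubar(t))=\frac1J\sum_{m=1}^J\bigl(G_j(u^m(t))-G_j(\ubar(t))\bigr)$ through a second-order Taylor expansion of each observation $G_j$ about the ensemble mean. Writing $v^m=u^m-\ubar$ for the ensemble deviations, Taylor's theorem with integral remainder gives
\[
G_j(u^m)-G_j(\ubar)=\nabla G_j(\ubar)^\top v^m+\int_0^1(1-s)\,(v^m)^\top H_{G_j}(\ubar+s v^m)\,v^m\,\dd s.
\]
The key observation is that averaging over the ensemble annihilates the first-order term, since $\frac1J\sum_m v^m=0$ by definition of $\ubar$. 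Only the quadratic remainder survives, which is precisely the term the Hessian domination is designed to control.

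First I would average and take absolute values, then insert the bound $|[H_{G_j}]_{p,q}|\leq Q^j_{p,q}$ to replace the quadratic form by $\sum_{p,q}Q^j_{p,q}|v^m_p||v^m_q|$, carrying out the $s$-integral to produce the factor $\frac12$. This yields
\[
|\Gbar_j-G_j(\ubar)|\leq \frac12\,\frac1J\sum_{m=1}^J\sum_{p,q=1}^{d_u}Q^j_{p,q}|v^m_p||v^m_q|.
\]
Next I would relate the ensemble averages to the covariance entries by Cauchy--Schwarz: for each pair $(p,q)$,
\[
\frac1J\sum_{m}|v^m_p||v^m_q|\leq\Bigl(\tfrac1J\sum_m(v^m_p)^2\Bigr)^{1/2}\Bigl(\tfrac1J\sum_m(v^m_q)^2\Bigr)^{1/2}\leq \sqrt{C^{uu}_{p,p}}\,\sqrt{C^{uu}_{q,q}}\leq\|C^{uu}\|_{\max},
\]
using $\frac1J\sum_m(v^m_p)^2\leq C^{uu}_{p,p}\leq\|C^{uu}\|_{\max}$ and the fact that the largest diagonal entry dominates all entries of a positive semidefinite matrix.

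Summing over $p,q$ and invoking the sparsity hypothesis $\sum_{p,q}Q^j_{p,q}\leq L_G$ then gives $|\Gbar_j-G_j(\ubar)|\leq\frac12 L_G\|C^{uu}\|_{\max}$ for every $j$; squaring yields the per-component estimate, and summing over $j=1,\dots,d_y$ yields the aggregate estimate $\|G(\ubar)-\Gbar\|^2\leq d_y L_G^2\|C^{uu}\|^2_{\max}$. Since $\|C^{uu}\|_{\max}\to 0$ by Theorem \ref{thm:max}, the quadratic factor $\|C^{uu}\|^2_{\max}$ is eventually bounded by $\|C^{uu}\|_{\max}$, so the linear-power forms required in \eqref{aspt:regu2} and \eqref{aspt:regu22} hold with $L=L_G$ for $t$ large. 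The only genuinely delicate step is the cancellation of the first-order term under ensemble averaging; the remainder is bookkeeping with the domination matrices $Q^j$ and Cauchy--Schwarz. The main obstacle I anticipate is reconciling the power of $\|C^{uu}\|_{\max}$: the natural estimate produces a clean quadratic dependence, while \eqref{aspt:regu2} is stated with a linear power, and this gap is closed only by appealing to the collapse of the ensemble covariance.
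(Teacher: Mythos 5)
Your proposal is correct and follows essentially the same route as the paper: second-order Taylor expansion of $G_j$ about the ensemble mean, cancellation of the first-order term because the deviations sum to zero, domination of the quadratic remainder by $Q^j$, and the sparsity bound $\sum_{p,q}Q^j_{p,q}\leq L_G$ to arrive at $|\Gbar_j-G_j(\ubar)|\leq \tfrac12 L_G\|C^{uu}\|_{\max}$. The only differences are cosmetic: the paper converts the remainder into $\mathrm{tr}(Q^j C^{uu})$ rather than using your entry-wise Cauchy--Schwarz step (yours is in fact slightly more careful, since it retains the absolute values $|v^m_p||v^m_q|$ that the entry-wise domination hypothesis genuinely requires), and the paper leaves the quadratic-versus-linear power mismatch between its derived bound and \eqref{aspt:regu2}, \eqref{aspt:regu22} implicit, whereas you close it explicitly by invoking the eventual boundedness of $\|C^{uu}\|_{\max}$ from Theorem \ref{thm:max}.
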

\begin{proof}
Using Taylor expansion, we find that 
\[
|G_j(u^i)-G_j(\ubar)-\nabla G_j(\ubar) (u^i-\ubar)|\leq \frac12(u^i-\ubar)^\top Q^j (u^i-\ubar).
\]
Therefore we can obtain \eqref{aspt:regu22} through
\begin{align*}
|\Gbar_j(u)-G_j(\ubar)|&=\left|\frac{1}{J}\sum_{i=1}^J(G_j(u^i)-G_j(\ubar)-\nabla G_j(\ubar) (u^i-\ubar))\right|\\
&\leq\frac{1}{J}\sum_{i=1}^J \left| G_j(u^i)-G_j(\ubar)-\nabla G_j(\ubar) (u^i-\ubar))\right|\\
&\leq  \frac{1}{2J}\sum_{i=1}^J(u^i-\ubar)^\top Q^j (u^i-\ubar)\\
&=  \frac{1}{2J}\sum_{i=1}^J\text{tr}(Q^j (u^i-\ubar)(u^i-\ubar)^\top )\\
&=\frac{J-1}{2J}\text{tr}(Q^jC^{uu})\leq \frac{J-1}{2J}\|C^{uu}\|_{\max}\sum_{m,n}|Q^j_{m,n}|
\end{align*}
This also leads to \eqref{aspt:regu2} 
\[
\|G(\ubar)-\Gbar(t)\|^2=\sum_{j=1}^{d_y}|\Gbar_j(u)-G_j(\ubar)|^2\leq d_yL^2_G \|C^{uu}\|^2_{\max}.
\]
\end{proof}

Next we explain conditions \eqref{aspt:error} and \eqref{aspt:error2}  for the centralized localization scheme. 
\begin{lem}
For centralized localization scheme, suppose $G_j$ is depends mostly on one component $u_{i(j)}$:
\[
\sum_{m=1}^{d_u}|\partial_{u_m} G_j ||\Psi_{i,i(j)}-\Psi_{i,m}|\leq R_0\Psi_{i,i(j)},
\]
and
\[
\max_{i}\sum_{j=1}^{d_y}\sum_{k=1}^{d_u}|\partial_{u_k} G_{i}| \Psi_{k,i(j)}\leq L_3\quad \max_{j}\sum_{i=1}^{d_y}\sum_{k=1}^{d_u}|\partial_{u_k} G_{i}| \Psi_{k,i(j)}\leq L_3.
\]
Suppose also there is are matrices $Q^j$ that dominate the Hessian of $G_j$:
\[
|[H_{G_j}]_{m,n}|\leq Q^j_{m,n},
\]
and $Q^j$  are sparse so that there is a constant $L_G$ satisfies:
\[
\sum_{m,n=1}^{d_u}|Q_{m,n}^j|\leq L_G.
\]
Then \eqref{aspt:error} and \eqref{aspt:error2} holds with 
\begin{align*}
\left\|R \right\|&\leq \frac{J L_3}{J-1}R_0\|C^{uu}\|_{\max}+\sqrt{J-1}L_3L_G\|C^{uu}\|^{3/2}_{\max},\\
\left|[R]_{i,j}\right|&\leq \frac{J \Psi_{i,i(j)}}{J-1}R_0\|C^{uu}\|_{\max}+\sqrt{J-1}\Psi_{i,i(j)} L_G\|C^{uu}\|^{3/2}_{\max}.
 \end{align*}
 In particular, if each $G_j$ consists of only one location, i.e. $ G_j (u)=G_j(u_{i(j)})$,
 \[
|\partial_{u_m} G_j ||\Psi_{i,i(j)}-\Psi_{i,m}|\equiv 0\text{ and } R_0=0. 
 \]
 \end{lem}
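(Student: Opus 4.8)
The plan is to separate the two distinct sources of error buried in $R(t)$: the \emph{localization mismatch} coming from the fact that $\Cloc^{up}$ uses the single column $\Psi_{k,i(j)}$ whereas $\Cloc^{uu}$ uses the full $\Psi_{k,l}$, and the \emph{linearization error} coming from $C^{up}$ only approximating $C^{uu}\nabla G(\ubar)^\top$ for nonlinear $G$. To isolate these, I would first record the exact identity
\[
C^{up}_{k,j}=\sum_{l=1}^{d_u}\partial_{u_l}G_j(\ubar)\,C^{uu}_{k,l}+E_{k,j},\qquad E_{k,j}=\frac{1}{J-1}\sum_{m=1}^J (u^m_k-\ubar_k)\,r^m_j,
\]
where $r^m_j=G_j(u^m)-G_j(\ubar)-\nabla G_j(\ubar)(u^m-\ubar)$ is the first-order Taylor remainder, which by the Hessian domination $|[H_{G_j}]_{m,n}|\le Q^j_{m,n}$ obeys $|r^m_j|\le\tfrac12(u^m-\ubar)^\top Q^j(u^m-\ubar)$. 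Substituting this into $\Cloc^{up}_{k,j}=C^{up}_{k,j}\Psi_{k,i(j)}$ and subtracting $[\nabla G\,\Cloc^{uu}\,\nabla G^\top]_{i,j}$ produces the clean splitting $[R]_{i,j}=T^{(1)}_{i,j}+T^{(2)}_{i,j}$ with
\[
T^{(1)}_{i,j}=\sum_{k,l}\partial_{u_k}G_i\,\partial_{u_l}G_j\,C^{uu}_{k,l}\,(\Psi_{k,i(j)}-\Psi_{k,l}),\qquad T^{(2)}_{i,j}=\sum_{k}\partial_{u_k}G_i\,E_{k,j}\,\Psi_{k,i(j)}.
\]

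For $T^{(1)}$ I would bound $|C^{uu}_{k,l}|\le\|C^{uu}\|_{\max}$ and invoke the hypothesis $\sum_l|\partial_{u_l}G_j|\,|\Psi_{k,i(j)}-\Psi_{k,l}|\le R_0\Psi_{k,i(j)}$ to obtain $|T^{(1)}_{i,j}|\le R_0\|C^{uu}\|_{\max}\,S_{i,j}$ where $S_{i,j}:=\sum_k|\partial_{u_k}G_i|\Psi_{k,i(j)}$. The crux, and the step I expect to be the main obstacle, is controlling $E_{k,j}$: it is \emph{cubic} in the ensemble deviations, whereas the target is a power of a single maximal covariance entry. I would treat the cubic sum $\sum_m|u^m_k-\ubar_k|\,|u^m_p-\ubar_p|\,|u^m_q-\ubar_q|$ by peeling off one factor with the crude deterministic bound $|u^m_q-\ubar_q|\le\sqrt{(J-1)\|C^{uu}\|_{\max}}$ (the source of the $\sqrt{J-1}$ in the final constant) and applying Cauchy--Schwarz to the remaining two factors, which after summing $\sum_{p,q}|Q^j_{p,q}|\le L_G$ yields $|E_{k,j}|\lesssim\sqrt{J-1}\,L_G\,\|C^{uu}\|^{3/2}_{\max}$ and hence $|T^{(2)}_{i,j}|\lesssim\sqrt{J-1}\,L_G\,\|C^{uu}\|^{3/2}_{\max}\,S_{i,j}$.

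Assembling the two pieces gives $|[R]_{i,j}|\lesssim\big(R_0\|C^{uu}\|_{\max}+\sqrt{J-1}\,L_G\|C^{uu}\|^{3/2}_{\max}\big)S_{i,j}$. The entry-wise estimate \eqref{aspt:error2} then follows in the single-location regime $G_i=G_i(u_{i(i)})$, where $S_{i,j}$ collapses to a single term proportional to $\Psi_{i,i(j)}$, which is exactly the $\phi_{i,j}=\Psi_{i,j}$ choice in Assumption \ref{aspt:ploc}; the two contributions correspond to $\alpha=0$ (the $R_0$ term, requiring $R_\alpha$ small) and $\alpha=\tfrac12$ (the Hessian term). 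For the operator-norm bound \eqref{aspt:error} I would apply the interpolation inequality $\|R\|\le\sqrt{\|R\|_1\,\|R^\top\|_1}$ between the $\ell_\infty$ and $\ell_1$ operator norms (cf. Lemma \ref{lem:norm}), bounding the maximal row sum $\max_i\sum_j S_{i,j}$ and the maximal column sum $\max_j\sum_i S_{i,j}$ each by $L_3$ via the two $L_3$ hypotheses; since both one-sided norms are $\le L_3$, so is their geometric mean, giving $\|R\|\lesssim R_0 L_3\|C^{uu}\|_{\max}+\sqrt{J-1}\,L_3 L_G\|C^{uu}\|^{3/2}_{\max}$, matching the claim up to the bookkeeping constants (the $\tfrac{J}{J-1}$ factor emerging from the precise empirical-covariance normalization). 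Finally, the special case is immediate: if $G_j(u)=G_j(u_{i(j)})$ then $\partial_{u_m}G_j=0$ for $m\ne i(j)$, while for $m=i(j)$ the factor $\Psi_{i,i(j)}-\Psi_{i,m}$ vanishes identically, so every summand in the $R_0$-hypothesis is zero and one may take $R_0=0$, leaving only the $\|C^{uu}\|^{3/2}_{\max}$ term.
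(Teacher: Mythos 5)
Your proposal is correct and takes essentially the same route as the paper's proof: the identical Taylor-remainder decomposition of $\Cloc^{up}-\Cloc^{uu}\nabla G^\top$ into a localization-mismatch term (controlled by the $R_0$ hypothesis) and a Hessian term (controlled by peeling off one deviation factor of size $\sqrt{(J-1)\|C^{uu}\|_{\max}}$ and bounding the summed remainders by $L_G\|C^{uu}\|_{\max}$), followed by the interpolation $\|R\|\le\sqrt{\|R\|_1\,\|R^\top\|_1}$ with the two $L_3$ hypotheses and the same vanishing argument for the single-location special case. The only cosmetic deviations are that you carry the outer factor $\nabla G$ through the entrywise estimate (so your bound has $S_{i,j}$ where the paper's has $\Psi_{i,i(j)}$, the paper bounding the inner difference first) and that you control the remainder sum by Cauchy--Schwarz rather than the paper's trace identity, both yielding the same constants up to bookkeeping.
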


\begin{proof}
Denote
\[
R^j_{k}=G_j(u^k)-G_j(\ubar)-\nabla G_j (\ubar)(u^k-\ubar)^\top 
\]
Then by Taylor expansion
\[
|R^j_{k}|\leq \frac12 (u^k-\ubar)^\top Q^j (u^k-\ubar). 
\]
Then 
\[
(G_j(u^k)-G_j(\ubar))(u_i^k-\ubar_i)= \nabla G_j (\ubar)(u^k-\ubar)^\top  (u^k_i-\ubar_i)+R^j_k(u^k_i-\ubar_i). 
\]
Note that 
\[
\sum_{k=1}^J(\Gbar_j-G_j(\ubar))(u^k-\ubar)^\top =(\Gbar_j-G_j(\ubar))\sum_{k=1}^J (u^k-\ubar)^\top =0. 
\]
Therefore 
\begin{align*}
[\Cloc^{up}]_{i,j}=\frac{1}{J-1}\sum_{k=1}^J\sum_{m=1}^{d_u}\partial_{u_m} G_j (\ubar)(u^k_m-\ubar_m) (u^k_i-\ubar_i)\Psi_{i,i(j)}+R^j_k(u^k_i-\ubar_i)\Psi_{i,i(j)}.
\end{align*}
Note also that
\begin{align*}
[\Cloc^{uu}\nabla G^\top ]_{i,j}=\frac{1}{J-1}\sum_{k=1}^J\sum_{m=1}^{d_u}\partial_{u_m} G_j (\ubar)(u^k_m-\ubar_m) (u^k_i-\ubar_i)\Psi_{i,m}.
\end{align*}
In summary,
\begin{align*}
&\left|[\Cloc^{up}]_{i,j}-[ \Cloc^{uu} \nabla G^\top ]_{i,j}\right|\\
&=\left|\frac{1}{J-1}\sum_{k=1}^J\sum_{m=1}^{d_u}\partial_{u_m} G_j (\ubar)(u^k_m-\ubar_m) (u^k_i-\ubar_i)(\Psi_{i,i(j)}-\Psi_{i,m})+R^j_k(u^k_i-\ubar_i)\Psi_{i,i(j)}\right|\\
&\leq \frac{J\Psi_{i,i(j)}}{J-1}\|C^{uu}\|_{\max}\sum_{m=1}^{d_u}|\partial_{u_m} G_j (\ubar)||1-\Psi_{i,m}/\Psi_{i,i(j)}|+\frac{\Psi_{i,i(j)}}{J-1}\sum_{k=1}^J\sum_{m=1}^{d_u} R^j_k|u^k_i-\ubar_i|\\
&\leq \frac{J \Psi_{i,i(j)}}{J-1}R_0\|C^{uu}\|_{\max}+\frac{\Psi_{i,i(j)}}{J-1}\sum_{k=1}^J\sum_{m=1}^{d_u} R^j_k|u^k_i-\ubar_i|.
\end{align*}
Note that 
\[
|u^k_i-\ubar_i|^2\leq \sum_{k=1}^J|u^k_i-\ubar_i|^2=(J-1)C^{uu}_{i,i}\leq (J-1)\|C^{uu}\|_{\max}.
\]
Meanwhile
\begin{align*}
\frac{1}{J-1}\sum_k R^j_{k}&\leq \frac{1}{2(J-1)}\sum_{i=1}^J(u^i-\ubar)^\top Q^j (u^i-\ubar)\\
&=  \frac{1}{2(J-1)}\sum_{i=1}^J\text{tr}(L^j (u^i-\ubar)(u^i-\ubar)^\top )\\
&=\frac{1}{2}\text{tr}(L^jC^{uu})\leq \frac{1}{2}\|C^{uu}\|_{\max}\sum_{m,n}|Q^j_{m,n}|.
\end{align*}
We have shown that 
\[
\left|[\Cloc^{up}]_{i,j}-[ \Cloc^{uu} \nabla G^\top ]_{i,j}\right|\leq \frac{J \Psi_{i,i(j)}}{J-1}R_0\|C^{uu}\|_{\max}+\sqrt{J-1}\Psi_{i,i(j)} L_G\|C^{uu}\|^{3/2}_{\max}=:\Psi_{i,i(j)}R_1
\]
Finally, we have that 
\begin{align*}
\left\|\nabla G \Cloc^{up}-\nabla G  \Cloc^{uu} \nabla G^\top \right\|_1
&\leq \max_{i}\sum_{j,k}|\partial_{u_k} G_{i}| \left|[\Cloc^{up}]_{k,j}-[ \Cloc^{uu} \nabla G^\top ]_{k,j}\right|\\
&\leq\max_{i}\sum_{j,k}|\partial_{u_k} G_{i}| R_1 \Psi_{k,i(j)}\leq L_3R_1. 
\end{align*}
Likewise
\begin{align*}
\left\|[\nabla G \Cloc^{up}]^\top -\nabla G  \Cloc^{uu} \nabla G^\top \right\|_1
&\leq \max_{j}\sum_{i,k}|\partial_{u_k} G_{i}| \left|[\Cloc^{up}]_{k,j}-[ \Cloc^{uu} \nabla G^\top ]_{k,j}\right|\\
&\leq\max_{j}\sum_{i,k}|\partial_{u_k} G_{i}| R_1 \Psi_{k,i(j)}\leq L_3R_1. 
\end{align*}
We use Lemma \ref{lem:norm} to obtain an upper bound for $\left\|\nabla G \Cloc^{up}-\nabla G  \Cloc^{uu} \nabla G^\top \right\|$. 
\end{proof}

Finally, we discuss condition \eqref{aspt:error} in the context of a linearized localization scheme.
Specifically, if we can compute the the Jacobian matrix $\nabla G(\ubar)$, we can use $H=\nabla G(\ubar)$ and \eqref{aspt:error} holds with $R_\alpha=0$. Without the Jacobian, one can consider a finite difference estimator of $\nabla G$ with step size $O(\frac{1}{(t+1)^\beta})$, so that $\|H-\nabla G\|=O(\frac{1}{(t+1)^\beta})$ and \eqref{aspt:error} holds with $\alpha=\beta+1$.

\begin{lem}
\label{lem:errorloc}
For linearized localization scheme, the following holds:
\begin{align*}
&\|[\nabla G(\bar{u}) \Cloc^{up}-[\nabla G(\bar{u}) \Cloc^{uu}\nabla G(\bar{u})^\top]\|\leq \|H-\nabla G(\ubar)\|\|\nabla G(\ubar)\|\|C^{uu}\|_{\max}\psi_0,
\end{align*}
where $\psi_0=\max_i\sum_j |\Psi_{i,j}|$. 
\end{lem}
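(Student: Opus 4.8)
The plan is to exploit the defining relation of the linearized localization scheme, $\Cloc^{up}(t)=\Cloc^{uu}(t)H(t)^\top$, to rewrite the error matrix $R(t)$ from \eqref{eq:Rt} as a single product in which the discrepancy $H-\nabla G(\ubar)$ appears as an explicit factor. Substituting $\Cloc^{up}=\Cloc^{uu}H^\top$ into $R(t)=\nabla G(\ubar)\Cloc^{up}-\nabla G(\ubar)\Cloc^{uu}\nabla G(\ubar)^\top$ and collecting the common left factor $\nabla G(\ubar)\Cloc^{uu}$ gives
\[
R(t)=\nabla G(\ubar)\,\Cloc^{uu}\,\bigl(H-\nabla G(\ubar)\bigr)^\top.
\]
Once this factorization is in hand, the estimate follows by submultiplicativity of the operator norm, $\|R(t)\|\le\|\nabla G(\ubar)\|\,\|\Cloc^{uu}\|\,\|H-\nabla G(\ubar)\|$, where I use that the $l_2$ operator norm is invariant under transposition so that $\|(H-\nabla G(\ubar))^\top\|=\|H-\nabla G(\ubar)\|$.

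It then remains to control $\|\Cloc^{uu}\|$ in terms of $\|C^{uu}\|_{\max}$ and the localization matrix. Here I would invoke the Schur-product structure $\Cloc^{uu}=C^{uu}\circ\Psi$. For a symmetric matrix the $l_2$ operator norm is dominated by the maximum absolute row sum (the $\|\cdot\|_1$ norm of the paper), a relation I would cite from Lemma \ref{lem:norm}. Combining this with the entrywise bound $|C^{uu}_{i,j}|\le\|C^{uu}\|_{\max}$ — valid because the positive semidefiniteness of $C^{uu}$ forces its largest-magnitude entry onto the diagonal — yields
\[
\|\Cloc^{uu}\|\le\max_i\sum_j|C^{uu}_{i,j}||\Psi_{i,j}|\le\|C^{uu}\|_{\max}\,\max_i\sum_j|\Psi_{i,j}|=\psi_0\|C^{uu}\|_{\max}.
\]
Inserting this into the submultiplicative estimate produces exactly the stated inequality.

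The only genuinely non-routine step is the operator-norm bound on the Schur product $C^{uu}\circ\Psi$; everything else is algebraic rearrangement. The two observations that make this step go through are that $\Cloc^{uu}$ is symmetric, so its spectral norm is controlled by a single row-sum norm rather than the geometric mean of row- and column-sum norms, and that the maximal entry of the covariance $C^{uu}$ sits on the diagonal, so $\|C^{uu}\|_{\max}$ can be pulled out of the row sum uniformly. I expect no serious obstacle beyond correctly invoking the norm comparison in Lemma \ref{lem:norm}.
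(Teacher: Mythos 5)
Your proof is correct and follows essentially the same route as the paper: the same factorization $R(t)=\nabla G(\ubar)\,\Cloc^{uu}\,(H-\nabla G(\ubar))^\top$, the same submultiplicativity step, and the same bound $\|\Cloc^{uu}\|\leq \max_i\sum_j|\Cloc^{uu}_{i,j}|\leq \psi_0\|C^{uu}\|_{\max}$ via the row-sum norm and the fact that a positive semidefinite matrix attains its maximal entry on the diagonal. Your write-up is in fact slightly more explicit than the paper's about why the Schur-product row sums are controlled, but there is no substantive difference.
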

\begin{proof}
Simply note that under linearized scheme,
\begin{align*}
&\quad\|[\nabla G(\bar{u}) \Cloc^{up}-[\nabla G(\bar{u}) \Cloc^{uu}\nabla G(\bar{u})^\top]\|\\
&=\|[\nabla G(\bar{u}) \Cloc^{uu}(H-\nabla G(\bar{u}))^\top]\|\\
&\leq \|H-\nabla G\|\|\nabla G\|\|\Cloc^{uu}\|.
\end{align*}
Then we use the fact that 
\[
\|\Cloc^{uu}\|\leq \max_i\sum_j |\Cloc^{uu}_{i,j}|\leq \|\Cloc^{uu}\|_{\max}\psi_0. 
\]

\end{proof}

\section{Numerical illustrations of localization in EKI}
\label{sec:num}
We illustrate the use of localization in EKI in numerical experiments,
which range from simple linear and nonlinear toy problems, to Lorenz models
and actual field-data.
For a given problem, we perform an EKI and a LEKI 
and compute the associated data misfit defined by
\begin{equation}
\label{eq:Misfit}
	\text{Misfit} = \left(\frac{1}{d_y}\sum_{i=1}^{d_y} (y_i-G_i(\bar{u}))^2\right)^{0.5}.
\end{equation}
We also consider the maximum error defined by
\begin{equation}
	\text{Max. Error} = \max_i \vert y_i-G_i(\bar{u})\vert,
\end{equation}
the trace of the ensemble covariance (tr$(C^{uu})$),
and the max.\slash min.~of its diagonal elements ($\|C^{uu}\|_{\max}$, $\|C^{uu}\|_{\min}$)
after each step in the iteration. 
In the numerical experiments below, 
we perform a set number of iterations with EKI and LEKI
because we are mostly interested in demonstrating the collapse,
not so much in when to stop an EKI or LEKI (which is also an interesting topic).
Because the initial ensemble 
has some effect on the output 
(error and ensemble covariances),
we randomize all numerical experiments
and perform all calculations over a set of independent observations
and different initializations.
We then show the averaged results,
where all quantities are averaged over the numerical experiments.
In all numerical tests,
we discretize the EKI dynamics with an Euler scheme
and comment on the stepsize in the context of each specific example.

\subsection{Linear problem with local observations}
\label{sec:numlin}
We consider the linear model $G(u)=u$,
so that $d_y=d_u$.
Thus, the dimension is the only free parameter
that defines this problem and we can vary the dimension to
study the dimension independence of a LEKI
and contrast it with dimension dependence of the ``vanilla'' EKI.
Since this linear example is characterized by local observations,
we can simply localize with the identity matrix.
Using the identity matrix is the ``optimal'' choice for localizing this problem,
since the model is composed of independent components.
We have run several examples with other localization functions
(e.g., Gaussians) and the results remain qualitatively the same,
as long as the localization radius is chosen to be small
(as is required by this problem).
For the simulations below,
we use a constant time step of $\Delta t = 0.1$
when discretizing the EKI dynamics.

We vary the dimension from five to 100 and, for each $N$, 
perform 200 independent experiments.
In each experiment, we generate an observation by first drawing
a ``true'' $u$ from a Gaussian $\mathcal{N}(0,I)$,
and plugging the result into~(\ref{eq:inv}),
with independent draws from $\eta$ for each experiment.
We apply EKI with ensemble size $J=50$  with and without localization
and record the associated misfits (after 500 iterations) in each experiment.
Figure~\ref{fig:DimIndep}(a) shows the misfits,
averaged over the 200 experiments, associated with EKI and localized EKI
as a function of the dimension.
\begin{figure}[tb]
	\centering
	\includegraphics[width=1\textwidth]{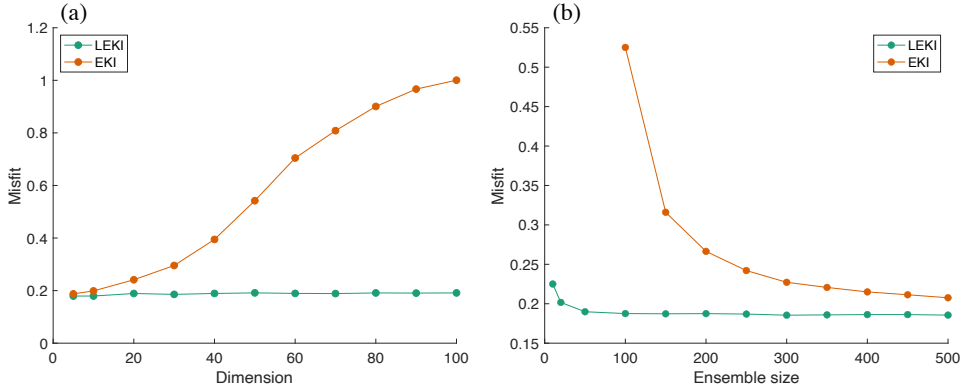}
	\caption{
	Misfits of EKIs for the linear, uncoupled problem described in Section \ref{sec:numlin}.
	(a): Misfit as a function of dimension for EKI (orange) and LEKI (green).
	The ensemble size is $J=50$.
	(b): Misfit as a function of ensemble size for a problem of dimension $100$.}
	\label{fig:DimIndep}
\end{figure}
The experiments indicate that localization helps EKI to converge to a small misfit, 
even if the ensemble size is smaller than the dimension of the problem.
The reason is that the EKI ensemble, after localization, is not confined to the subspace spanned
by the initial ensemble.
This happens because the localized ensemble covariances are full rank,
which is higher than the rank of the unlocalized ensemble covariances (limited by ensemble size).

The benefits of localizing EKI can be illustrated further by numerical experiments
with a fixed dimension, but varying the ensemble size.
Specifically, we fix the dimension to $100$ and vary the ensemble size from ten to 500.
For a given ensemble size, we generate observations,
apply EKI with and without localization,
and compute the associated misfits (after 500 iterations).
Figure~\ref{fig:DimIndep}(b) shows the misfit, averaged over 200 experiments, 
as a function of ensemble size for EKI and localized EKI.
The experiments indicate that localization accelerates the convergence of EKI
with respect to the ensemble size.

In summary,
the two numerical experiments above demonstrate the dimension independence
of the localized EKI: the ensemble size required, e.g., 
to get to a low misfit, 
is independent of the dimension of the problem.
The unlocalized EKI, however, is not dimension independent
because its required ensemble size is a function of the dimension of the problem.
Moreover, localization accelerates the convergence of EKI
with respect to the ensemble size (at any fixed dimension).

We further study the collapse of the localized EKI ensemble in this example.
We now fix the dimension to $d_u=d_y=50$
and consider the rate at which the ensemble collapses
and at which error is reduced during the iteration.
To this extent, we run 100 independent experiments
(independent ground truth, observations and initial ensembles)
and apply EKI with $J=50$ and $J=10^3$ ensemble members,
as well as a LEKI with $J=20$ ensemble members.
We record the trace of the ensemble covariance (tr$(C^{uu})$),
the min.\slash max. values of its diagonal elements ($\|C^{uu}\|_{\max}$,$\|C^{uu}\|_{\min}$),
and the misfit and Max.~Error after each iteration for each of the 100 experiments.
We illustrate the results in Figure~\ref{fig:Collapse}.
\begin{figure}[tb]
	\centering
	\includegraphics[width=1\textwidth]{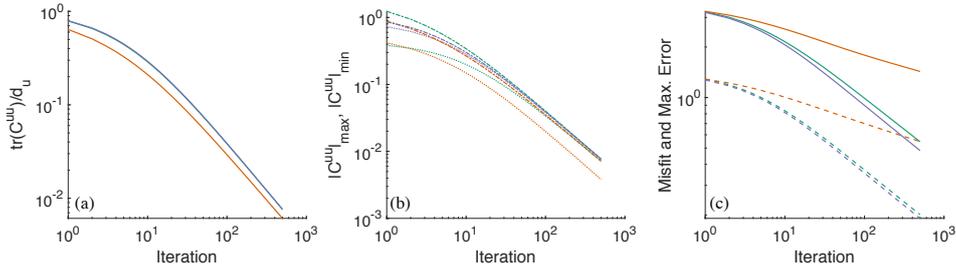}
	\caption{
	Collapse of EKI ensembles for the linear, uncoupled problem described in Section \ref{sec:numlin}.
	Green: LEKI with $J=20$.
	Orange: EKI with $J=50$.
	Purple: EKI with $J=1,000$.
	(a) Average of tr$(C^{uu})$ 
	as a function of the iteration number.
	LEKI (green) not visible because it is nearly identical to the EKI with large ensemble size (purple).
	(b) Average of the largest (dashed) and smallest (dotted) diagonal element of $C^{uu}$ as a function of the iteration number.
	(c) Average of misfit (dashed) and Max.~Error (solid)
	as a function of the iteration number.
	All averages are taken over 100 independent experiments.
	}
	\label{fig:Collapse}
\end{figure}

We conclude from Figures~\ref{fig:Collapse}(a) and (b)
that the ensembles of all EKIs collapse at (similar) fixed rates.
A similar rate of collapse, however, 
does not imply a similar reduction in error in all three EKIs.
Only the EKI with a large ensemble ($J=10^3$)
and the LEKI lead to a significant reduction in error
(see Figure~\ref{fig:Collapse}(c)).
The rate at which error is reduced is comparable for the 
large-ensemble EKI and the localized EKI.
This demonstrates, again, the necessity for localization
if one wants to (or needs to) keep the ensemble size small.

To be sure, we do not mean to use this example to suggest to use LEKI on a trivial optimization problem --
the problem at hand is easy to solve analytically.
Rather, our goal is to illustrate the benefits of localization on the simplest system we could come up with,
and which has been used, e.g., in meteorology, to study and illustrate the collapse of particle filters
\cite{Bickel,Bickel2,Bickel3,Snyder,MHS17,SBM2015}

\subsection{Nonlinear model, non-local observations}
\label{sec:numnonlin}
We now consider a nonlinear model for which $d_u=d_y$
and
\begin{equation}
	y_i = u_i-\sqrt{3}\hat{u}_i^2+\hat{u}_i^3, \quad i=1,\dots,d_u,
\end{equation}
where
\begin{equation}
	\hat{u}_i = \frac{1}{10}\sum_{j=-5}^{5} u_{i-j}, 
\end{equation}
is the average of ten ``neighboring'' elements of $u$. 
We assume zero boundary conditions to compute $\hat{u}_i$
for small and large indices (averaging only the remaining components).
This model $G(u)$ is thus characterized by a nonlinear coupling of its components,
however, the coupling is is confined to small neighborhoods, 
because only a few of the components of $u$ are averaged when computing $\hat{u}$.

We repeat the numerical tests above with this model
and localize the EKI with a centralized localization scheme 
(see \eqref{eq:cenloc}).
The localization function is a Gaussian with a length scale equal to one and the center is $i(j)=j$.
The time discretization of the EKI dynamics is an Euler scheme with a constant time step $\Delta t = 0.05$. 
We perform, for each version of the EKI, 100 iterations and 200 independent experiments.
Figure~\ref{fig:DimIndepNonlin} shows the misfit as a function of dimension,
using a fixed ensemble size of $J=50$.
\begin{figure}[tb]
	\centering
	\includegraphics[width=1\textwidth]{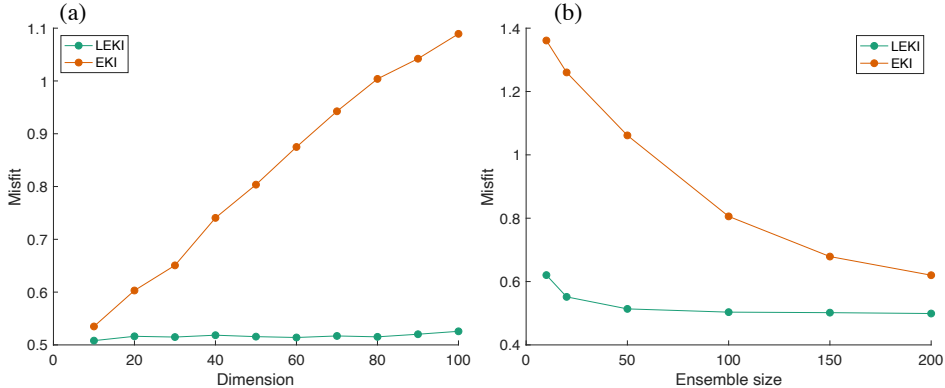}
	\caption{
	Misfit of EKIs for the nonlinear, coupled problem described in Section \ref{sec:numnonlin}.
	(a): Misfit as a function of dimension for EKI (orange) and LEKI (green).
	The ensemble size is $J=50$.
	(b): Misfit as a function of ensemble size for a problem of dimension $50$.}
	\label{fig:DimIndepNonlin}
\end{figure}
As in the linear example,
the misfit of the localized EKI is independent of the dimension,
while the misfit of the ``vanilla'' EKI grows with dimension.
Note that the localization here is not ``optimal'' 
because the localization function we chose is not capable
of reflecting the actual problem structure.
Nonetheless, the localization, even if done ``poorly''
has a tremendous effect on misfit and how quickly it decays during the iteration.

Figure~\ref{fig:CollapseNonlin} illustrates the collapse of the localized and unlocalized EKI ensembles.
\begin{figure}[tb]
	\centering
	\includegraphics[width=1\textwidth]{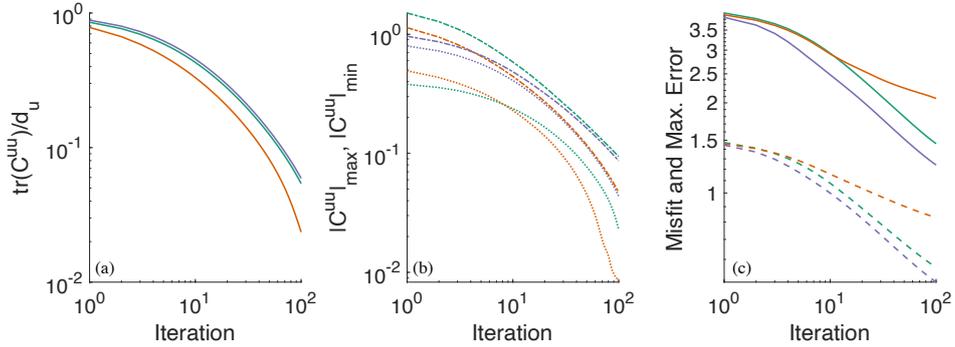}
	\caption{
	Collapse of EKI ensembles for the nonlinear, coupled problem described in Section \ref{sec:numnonlin}. The dimension is set to 50.
	Green: localized EKI with $J=50$.
	Orange: EKI with $J=50$.
	Purple: EKI with $J=10^3$.
	(a) Average of tr$(C^{uu})$ as a function of the iteration number.
	LEKI (green) not visible because it is nearly identical to the EKI with large ensemble size (purple).
	(b) Average of the largest (dashed) and smallest (dotted) diagonal element of $C^{uu}$ as a function of the iteration number.
	(c) Average of misfit (dashed) and Max.~Error (solid)
	as a function of the iteration number.
	LEKI (green) not visible because it is nearly identical to the EKI with large ensemble size (purple).
	All averages are taken over ten independent experiments.
	}
	\label{fig:CollapseNonlin}
\end{figure}
Shown are results averaged over ten independent experiments.
As in the linear example, 
we see that the ensembles of all EKIs collapse at (similar) fixed rates,
but the errors (misfit and Max.~Error)
decay much quicker for the LEKI (or large ensemble EKI),
than for the EKI with a small ensemble.
In summary, we observe in the linear and nonlinear examples
that the localized EKI can achieve a similar performance (in terms of collapse and misfit\slash error decrease)
as an unlocalized EKI with a much larger ensemble.
The reason, as explained above, is that LEKI can break out of the subspace spanned by the initial ensemble,
which makes it equivalent to an unlocalized EKI with a much larger ensemble size.
For these reasons, we also find that LEKI is dimension independent
(error decay or collapse are independent of dimension at fixed ensemble size),
while EKI has a strong dimension dependence (required ensemble size grows with dimension).

\subsection{Lorenz'96}
\label{sec:numl96}
The Lorenz'96 (L96) model \cite{L96} has been used in many studies of data assimilation and ensemble Kalman filtering as a simplified ``toy'' problem to illustrate the need for localization in EnKFs.
We follow this lead and apply TEKI (Tikhonov regularized EKI) to estimate the initial conditions of the L96 model given noisy observations.
The regularization we use is simple -- we regularize with the identity matrix --  because our main goal is to show the effects of localization in TEKI.

In brief, the L96 model is the ordinary differential equation (ODE)
\begin{equation}
	\frac{\text{d}x_k}{\text{d}t} \,=\, -x_k -x_{k-1}(x_{k-2}-x_{k+1}) + F,
\end{equation}
where $k=1,\dots,d_u$, $F = 8$ is a forcing term,
and where we assume a periodic domain so that 
$x_{-1} = x_{d_u-1}$, $x_{0} = x_{d_u}$ and $x_{d_u+1} = x_{1}$. 
The unknown parameter, $u$ is the initial condition $x(0)$.
The observations, $y$, are the state at time $t=0.2$,
perturbed by Gaussian noise with mean zero and covariance matrix equal to the identity matrix.
Note that we observe every state variable to avoid issues with observability.
We discretize the ODE with a simple first-order Euler scheme and time step $0.05$
(again, because we are mostly concerned with showing the effects of localization in TEKI).
We use a time step of 0.1 to discretize the TEKI flow,
using the same numerical scheme as in the previous two examples (also an Euler scheme).
We perform 100 TEKI iterations, starting with an ensemble drawn from a Gaussian distribution with mean zero
and covariance matrix equal the identity matrix.
We then compute the root mean square error (RMSE)
\begin{equation}
	\text{RMSE} = \left( \frac{1}{d_u} \sum_{j=1}^{d_u} (x^t_j - m_j)^2\right)^{1/2},
\end{equation}
where $x^t$ is the true initial condition and $m$ is the mean of the TEKI ensemble.

We perform 100 independent numerical experiments.
For the first one, we integrate the L96 equations, starting from a random state,
for 1000 time units and take the last state of this sequence as the initial condition we invert for.
For the remaining 99 experiments, we integrate the initial condition of the previous experiment for 1000 time units
and set the last state of that sequence to be the initial condition we seek with TEKI.
In this way, we average RMSE over the attractor of the L96 model.
\begin{figure}[tb]
	\centering
	\includegraphics[width=1\textwidth]{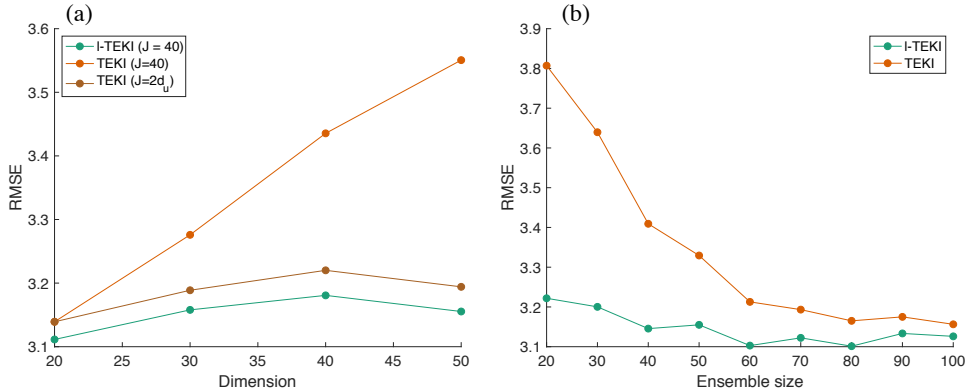}
	\caption{
	RMSE of TEKIs when estimating the initial condition of the L96 model described in Section \ref{sec:numl96}. 
	(a): RMSE as a function of dimension for TEKI with a small ensemble size ($J=40$, orange),
	for TEKI with a large ensemble size ($J=2d_u$, brown), and localized TEKI with a small ensemble size ($J=40$, green).
	(b): RMSE as a function of ensemble size for TEKI (orange) and localized TEKI (green). 
	The dimension of the problem is $d_u=40$.
	}
	\label{fig:L96}
\end{figure}

Results are summarized in Figure~\ref{fig:L96}.
For panel (a), we repeat what we did before and hold the ensemble sized fixed and increase the dimension  of the L96 model from 20 to 50. Shown is the RMSE as a function of the dimension. We note, again, that the RMSE (and misfit, not shown) increases quickly with dimension in case of TEKI (orange), but localization can keep the error (nearly) constant.
We further compare localized TEKI to a TEKI for which we increase the ensemble size, $J$, with dimension at a constant rate ($J=2d_u$). We note that localizing TEKI leads to an RMSE similar to what TEKI can achieve at much larger ensemble sizes.
This is reinforced in panel (b), where we show RMSE as a function of ensemble size for a problem of dimension $d_u=40$.
The RMSE of the localized TEKI with a small ensemble size ($J=30$) is comparable to the RMSE of a TEKI with ensemble size $J=100$.
We have thus demonstrated that localization significantly reduces the required ensemble size in EKI\slash TEKI in three different problems, which reinforces our theory, and further indicates that the theory is robust because the strict assumptions we made to derive the theoretical results are not  always satisfied in the numerical examples we tried.

\subsection{Inversion of DC resistivity field data}
\label{sec:numfield}
Electromagnetic inversions are one of the few tools we have to probe the Earth's crust.
Put simply, one can use electromagnetic inferences to map resistivity
of the Earth, because different types of ``rock'' (partial melt, the mantle, hydrocarbons) 
have different resistivities.
Here we apply localized EKI to invert the ``Schlumberger data set'' 
\cite{Occam}.
These data are DC resistivity field data and can be used to invert for
Earth's resistivity up to a depth of tens of kilometers.

Specifically, the data, $y$, shown as red dots in Figure~\ref{fig:SchlumIllu}(a)
are 29 measurements of apparent resistivity. 
These data are modeled by a 1D layered model of electrical resistivity,
that is described by the number of layers, $d_u$,
the layers' thicknesses, $t_i$ and associated resistivities, $u_i$. 
The details of the model can be found, e.g., in \cite{Occam},
but, in short, data and model are connected by the integral
 \begin{equation}
 	y_{j} = \left(\frac{AB}{2}\right)_j^2 \int_0^\infty T_1(\lambda) \,
	J_1\left(\left(\frac{AB}{2}\right)_j\lambda\right)\,\lambda\,\text{d}\lambda.
 \end{equation}
Here, $j=1,\dots,29$ is an index for the data, $J_1$ is the Bessel function of the first kind,
$(AB/2)_j$ (given) are half-electrode spacings and $T_1(\lambda)$
is the Koefoed resistivity transform,
which, after discretization, can be computed from the recursion
\begin{equation}
	T_i = 
	\frac{T_{i+1}+u_i\,\text{tanh}(\lambda t_i)}{1+T_{i+1}\,\text{tanh}(\lambda t_i)/u_i},
\end{equation}
where $T_i$ is the transform evaluated at the top of the $i$th layer.
The recursion starts with $T_{d_u}=u_{d_u}$, at the top of the terminating half-space.
We define $d_u=20$ layers that are logarithmically spaced between $10^{-1}$~m and $10^{5}$~m.
The unknown parameters we invert for are the layer resistivities $u_i$.

We apply EKI with an ensemble size $J=10$ 
(much smaller than the parameter- or data dimensions).
The initial ensemble is generated by drawing
from a uniform distribution between $0.5$~$\Omega$m and $5$~$\Omega$m,
independently for each layer 
(the upper and lower bounds for the resistivities are chosen based on the descriptions in \cite{Occam}).
We emphasize that the initial ensemble does not fit the data well
because it essentially consists of uncorrelated noise (within reasonable bounds for resistivity).
The EKI starts with a time step of $\Delta t=0.01$,
but the time step is increased, depending on the scaled misfit
at the current iteration.
The scaled misfit is defined by
\begin{equation}
	\text{Misfit}_s= 
	\left(
		\frac{1}{d_y} \sum_{i=1}^{d_y} \left(
		\frac{y_i -\hat{y}_i}{s^i}
		\right)^2
	\right)^{0.5},
\end{equation}
where $s^i$ are standard deviations of the expected errors in the data
(these are specified as part of the Schlumberger data set),
where $y_i$ are the measured apparent resistivities,
and $\hat{y}_i$ is the mean of the EKI ensemble. 
Once the scaled misfit is below eight, we set $\Delta t=0.1$
and once scaled misfit is below 6, we set $\Delta t=0.5$.

To localize the EKI,
we use a centralized Gaussian localization function,
but the spatial variable is in log-space (which is natural for this problem)
and, since $d_y\neq d_u$,
the localization matrix for $C^{up}$ is no longer square.
We chose the length-scale that defines the Gaussian localization
function to be $L=2$.
This is perhaps quite far from an ideal localization,
because we anticipate that the covariance structure 
is not necessarily stationary 
(covariances may extend over larger spatial scales with depth).

We iterate the EKI for a maximum of 2,000 iterations,
but stop the iteration if the scaled misfit is below 1.1;
we also stop the iteration when we encounter unphysical behavior 
within the EKI ensemble (leading to NaNs in the model output).
During the EKI iteration, we occasionally encounter negative resistivities,
which are unphysical, because we do not incorporate any constraints into the EKI.
If an ensemble member exhibits a negative resistivity, 
we set its value to the minimum resistivity of $0.1$~$\Omega$m.

We note that the results one obtains with EKI
vary quite significantly with the initial ensemble.
This is perhaps not surprising because
(\emph{i}) the initial ensemble is essentially composed of noise;
(\emph{ii}) it is known that many models can fit the data equally well
\cite{Occam}, so that, starting from noise, the EKI will find several local minima.
The latter can be addressed by incorporating regularization,
but we do not pursue this here.
Instead, we repeatedly perform EKIs with different initial ensembles
and discuss the results.

Figure~\ref{fig:SchlumIllu} illustrates the results of six localized EKI inversions.
\begin{figure}[tb]
	\centering
	\includegraphics[width=1\textwidth]{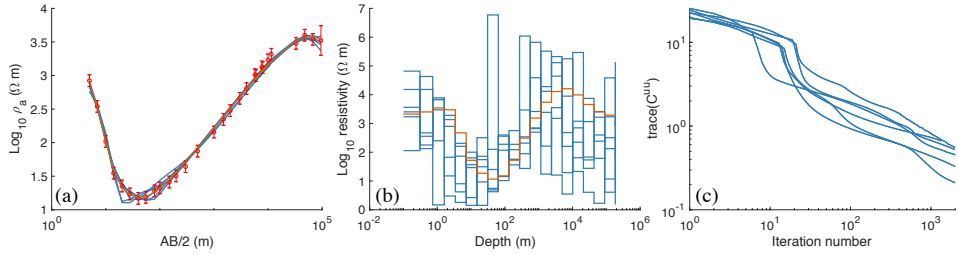}
	\caption{
	Localized EKI on resistivity field data described in Section \ref{sec:numfield}.
	(a) Apparent resistivity data (red error bars) 
	and six LEKI reconstructions after 2000 iterations (blue).
	The EKIs start with different initial conditions.	
	Shown in orange (often hidden) is the result of an Occam inversion (see text for details).
	(b) Resistivity as a function of depth.
	The averages of six LEKI ensembles after 2000 iterations are shown in blue
	and the result of an Occam inversion is shown in orange.
	(c) Trace of the LEKI ensemble covariance as a function of the iteration number
	for six LEKIs, initialized with different initial ensembles.
	}
	\label{fig:SchlumIllu}
\end{figure}
Panel (a) shows the apparent resistivities computed from the model outputs for the means of six LEKI ensembles
after 2000 iterations along with the data.
For comparison, we also performed an Occam inversion \cite{Occam},
in which we compute the layered resistivities using a (gradient-based) optimization.
The Occam inversion, however, makes use of Tikonov regularization,
while the LEKI does not.
Nonetheless, we observe that LEKI discovers models that exhibit a good fit to the data
(to within the assumed errors), which is comparable to the data fit of an alternative technique.
Panel~(b) shows the resistivity models that lead to the apparent resistivities in panel~(a).
We note that the models that LEKI discovers are not necessarily similar,
but lead to a similarly good data fit (and these correspond to local minima of the unregularized loss function). 
The fact that there exist several models that fit the data equally well is a well-known characteristic of DC resistivity problems.
We can thus conclude that LEKI can find (local) minima of the data misfit quite efficiently.
Panel~(c) illustrates the collapse of the LEKI ensembles
and shows the trace of the LEKI ensemble covariance as a function of the iteration.
We note that the collapse occurs at similar rates, 
independently of the initial ensemble.

To demonstrate the beneficial effects of localization of EKI,
we compare the localized EKI to an unlocalized EKI with the same, small ensemble size ($J=10$).
We now perform 50 experiments, with a different initialization of the LEKI and EKI in each experiment
(but the EKI and LEKI start with the same ensemble).
In each experiment, we record the scaled misfit at the end of the iteration
and the exit condition:
target scaled misfit of 1.1 is reached, or number of iterations exceeds 2000, or failure\slash NaN.
We summarize the results of these experiments in Table~\ref{tab:SchlumResults},
where we show statistics of the scaled misfit (computed from all runs that did \emph{not} fail)
and the exit condition.
\begin{table}[tb]
\caption{Summary of results of 50 initializations of EKIs}
\begin{center}
\begin{tabular}{lcccccc}
& \multicolumn{3}{c}{RMSE} & \multicolumn{2}{c}{Exit condition}  \\
& Mean & Median & Std.& Target reached & Failed \\\hline
LEKI& 2.02 & 1.42 & 1.21 & 40 & 10 \\
EKI & 2.94 & 1.72 & 2.80 & 1 & 15
\end{tabular}
\end{center}
\label{tab:SchlumResults}
\end{table}
We note that the scaled misfit after 2000 iterations
is smaller for LEKI than for EKI
(in both mean and median),
and that the standard deviation of scaled misfit is also smaller for LEKI.
A histogram of RMSE of all LEKI\slash EKI is shown in Figure~\ref{fig:SchlumComparison}
to supplement the information from mean, median and standard deviation shown in the table.
\begin{figure}[tb]
	\centering
	\includegraphics[width=0.4\textwidth]{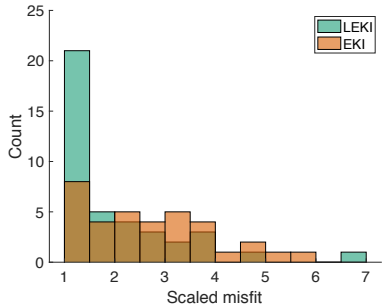}
	\caption{
	Histogram of scaled EKI misfit after 2,000 iterations of LEKI (green) and EKI (orange). The problem is described in Section \ref{sec:numfield}.
	}
	\label{fig:SchlumComparison}
\end{figure}
In particular, we note the large peak around one for the localized EKI. 
Our results thus demonstrate, again, that localization (even if it is not done perfectly)
helps to accelerate the EKI convergence.
Localization also stabilizes the inversion, as can be seen from the fact
that LEKI failed in fewer cases than the unlocalized EKI.

\section{Summary and conclusions}
We described how ideas akin to localization in ensemble data assimilation and ensemble Kalman filtering can be used in ensemble Kalman inversion (EKI). 
In brief, the idea of localization is to enforce an assumed correlation structure on ensemble estimates of covariance matrices within EKI.
We demonstrate, in theory and in practice, that localization brings about significant computational advantages,
the most startling being that localization enables the EKI ensemble to break out of the subspace spanned by the initial ensemble.
This subspace property of EKI implies that EKI requires an ensemble size (at least) proportional to the dimension of the problem (number of unknown parameters) -- this is impractical in most relevant problems. 
Localization does away with this requirement and enables a dimension independent application of the localized EKI (LEKI).
Specifically, we demonstrate in several examples, that the required ensemble size is independent of the dimension of the problem, as long as localization can be applied.
We formalized, for the first time,  the effects of localization on EKI and proved theorems on ensemble collapse and convergence rates. 
While some of our theoretical results require relatively strict assumptions, which may be hard to validate in practice, our numerical experiments indicate that LEKI can work well, even if some of our assumptions may only be partly satisfied.
This suggests that future work can significantly tighten the bounds we derived and relax assumptions.
Our work is a first meaningful step towards placing the largely empirical approach of localization on a mathematically sound footing within EKI, showing that localization is a required step to make EKI applicable to large-scale problems.

\section*{Acknowledgements}
We thank Andrew Stuart (Caltech) for interesting discussion of localization in EKI.
XT is supported by the Singapore Ministry of Education (MOE) grant R-146-000-292-114. MM is supported by the US Office of Naval Research (ONR) grant N00014-21-1-2309.


\appendix
\section{Technical Lemmas}
We present some technical estimates here. Many of them or similar variants can be found in the literature. 

\begin{lem}
\label{lem:norm}
For any $N\times N$ matrix $A$, the following holds
\begin{align}
&\|A\|_{\max} \le \|A\|,\label{eq:maxle2norm}\\
&\|A\| \le \sqrt{\|A\|_1 \|A^\top \|_1}\label{eq:2normlestuff}.
\end{align}
\end{lem}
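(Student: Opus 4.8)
The plan is to prove each inequality separately using standard operator-norm arguments. For the first bound \eqref{eq:maxle2norm}, I would observe that $\|A\|_{\max}=\max_{i,j}|A_{i,j}|$, and that each entry can be extracted as $A_{i,j}=e_i^\top A e_j$, where $e_i,e_j$ are standard Euclidean basis vectors. Since $|e_i^\top A e_j|\le \|e_i\|\,\|A e_j\|\le \|A\|\,\|e_j\|$ by Cauchy--Schwarz and the definition of the operator norm, and since $\|e_i\|=\|e_j\|=1$, it follows immediately that $|A_{i,j}|\le\|A\|$ for every $i,j$, and taking the maximum over $i,j$ gives $\|A\|_{\max}\le\|A\|$.

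For the second bound \eqref{eq:2normlestuff}, I would use the fact that $\|A\|^2=\|A^\top A\|=\rho(A^\top A)$, the spectral radius of the symmetric positive semidefinite matrix $A^\top A$ (the largest eigenvalue equals the squared operator norm). The key step is to bound the spectral radius by a submultiplicative operator norm: for any matrix $B$, the spectral radius is dominated by any induced operator norm, in particular the $\ell_\infty$ operator norm, which the paper has identified with $\|\cdot\|_1$ (the maximum absolute row sum). Hence $\|A\|^2=\rho(A^\top A)\le\|A^\top A\|_1$. I would then apply submultiplicativity of the $\ell_\infty$ operator norm, $\|A^\top A\|_1\le\|A^\top\|_1\,\|A\|_1$, to conclude $\|A\|^2\le\|A\|_1\,\|A^\top\|_1$, and taking square roots yields the claim.

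The main obstacle, such as it is, lies in the second inequality: one must be careful that the ``$\|\cdot\|_1$'' in the paper's notation denotes the maximum absolute \emph{row} sum (equivalently, the $\ell_\infty\to\ell_\infty$ operator norm), rather than the maximum column sum, so that the submultiplicativity and the spectral-radius bound are being applied consistently. The appearance of $\|A^\top\|_1$ alongside $\|A\|_1$ is precisely what accounts for the distinction between row and column sums, and the symmetric product $A^\top A$ is what forces both factors to enter. Once this bookkeeping is pinned down, both estimates are short consequences of the interplay between the spectral radius, submultiplicativity of induced operator norms, and the variational characterization of the operator norm via basis vectors.
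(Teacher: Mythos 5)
Your proposal is correct. For \eqref{eq:maxle2norm} your argument is exactly the paper's: extract each entry as $e_i^\top A e_j$ and bound it by the operator norm via Cauchy--Schwarz. For \eqref{eq:2normlestuff}, however, the paper gives no argument at all---it simply defers to Lemma B.2 of \cite{MTM19}---whereas you supply a genuine, self-contained proof: writing $\|A\|^2=\rho(A^\top A)$, bounding the spectral radius of the symmetric positive semidefinite matrix $A^\top A$ by the induced $\ell_\infty\to\ell_\infty$ operator norm (the paper's $\|\cdot\|_1$, i.e.\ the maximum absolute row sum), and then invoking submultiplicativity,
\[
\|A\|^2=\rho(A^\top A)\le \|A^\top A\|_1\le \|A^\top\|_1\,\|A\|_1 .
\]
Every step is valid: spectral radius is dominated by any induced operator norm, induced norms are submultiplicative, and since $A^\top A$ is symmetric PSD its eigenvalues are real, so no complex-eigenvalue subtleties arise. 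You are also right to flag the row-sum versus column-sum bookkeeping---the paper's $\|\cdot\|_1$ is the max \emph{row} sum, and the factor $\|A^\top\|_1$ is what carries the column-sum information. What your route buys is that the lemma becomes fully self-contained within the paper (no external citation needed); what the paper's citation buys is only brevity. Your argument could be spliced in as the missing proof of \eqref{eq:2normlestuff} essentially verbatim.
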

\begin{proof}
Inequality (\ref{eq:maxle2norm}) follows via
\[
\|A\|_{\max}=\max_{i,j}|[A]_{i,j}|=\max_{i,j}|[e_t]_i^\top A e_j|\le \|A\|,
\]
where $[e_t]_i$ and $e_j$ are the $i$-th and $j$-th standard Euclidean basis vector.
Inequality \eqref{eq:2normlestuff} follows from \cite{MTM19} Lemma B.2.
\end{proof}

\begin{lem}
\label{lem:boundonderivative}
Suppose $X_t=[x_1(t),\ldots, x_n(t)]$ jointly follows an ODE, $\frac{d}{dt}X_t=F(X_t)$. Let $m_t=\max_{1\leq i\leq n}\{x_i(t)\}$. Let $i_t$ be the smallest index $i$ such that $x_i(t)=m_t$. Suppose there is a continuous function $g(x,t)$ such that for any $t\geq 0$, 
\[
\frac{d}{dt}x_{i_t}(t)\leq g(x_{i_t}(t),t).
\]

Suppose $y_t$ satisfies $\frac{d}{dt}y_t=g(y_t,t)+\delta_0$ for a fixed $\delta_0>0$ and $y_0>m_0$, then for all $t>0$,  $y_t>m_t$.
\end{lem}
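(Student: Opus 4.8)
The plan is to run a standard ``first time the inequality fails'' contradiction argument, but to compare $y_t$ not against the (possibly nondifferentiable) maximum $m_t$ directly, but against whichever single coordinate realizes the maximum at the critical time. First I would record that $m_t=\max_{1\le i\le n} x_i(t)$ is continuous, being a finite maximum of the continuously differentiable coordinate functions $x_i(t)$ (which are $C^1$ as solutions of $\frac{d}{dt}X_t=F(X_t)$); together with continuity of $y_t$, this makes the gap $w_t:=y_t-m_t$ continuous, with $w_0>0$ by hypothesis.

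Next I would suppose, for contradiction, that the set $\{t>0:\ y_t\le m_t\}$ is nonempty and let $t^*$ be its infimum. Since $w_0>0$ and $w$ is continuous, $t^*>0$ and $w_t>0$ on $(0,t^*)$; continuity from the left gives $w_{t^*}\ge 0$, while continuity together with the definition of the infimum (a sequence in the set converging to $t^*$) gives $w_{t^*}\le 0$, so that $y_{t^*}=m_{t^*}$.

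The key step, which avoids ever differentiating $m_t$, is to pass to the active coordinate at $t^*$. Let $i^*:=i_{t^*}$ be the smallest index achieving the maximum at $t^*$, so that $x_{i^*}(t^*)=m_{t^*}=y_{t^*}$ and, crucially, $x_{i^*}(t)\le m_t$ for every $t$. Applying the hypothesis at the single time $t=t^*$ gives $\frac{d}{dt}x_{i^*}(t^*)\le g(x_{i^*}(t^*),t^*)=g(y_{t^*},t^*)$, while the ODE for $y$ gives $\frac{d}{dt}y_{t^*}=g(y_{t^*},t^*)+\delta_0$. Hence $\phi(t):=y_t-x_{i^*}(t)$ satisfies $\phi(t^*)=0$ and $\frac{d}{dt}\phi(t^*)\ge\delta_0>0$, so a first-order expansion yields $\phi(t)<0$ for $t$ slightly below $t^*$; that is, $y_t<x_{i^*}(t)\le m_t$ on some interval $(t^*-\varepsilon,t^*)$, contradicting $w_t>0$ on $(0,t^*)$. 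Thus the set is empty and $y_t>m_t$ for all $t>0$.

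The main obstacle I anticipate is precisely the nonsmoothness of $m_t$: its derivative can jump whenever the active index switches, so one cannot naively write a scalar differential inequality for $m_t$ and quote a comparison theorem. The device that resolves this is the comparison against the single coordinate $x_{i^*}$ active at $t^*$, which is everywhere dominated by $m_t$ yet touches it at $t^*$; the strict derivative gap $\delta_0$ then forces $y$ below $x_{i^*}$, hence below $m$, just before $t^*$. Note that the strictness supplied by $\delta_0>0$ is exactly what lets the argument succeed under mere continuity of $g$, with no Lipschitz or uniqueness assumptions needed.
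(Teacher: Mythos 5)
Your proof is correct and takes essentially the same route as the paper's: both define the first failure time $t_1=\inf\{t>0:\,y_t\le m_t\}$, pass to the coordinate $x_{i_{t_1}}$ attaining the maximum at that instant, and use the strict derivative gap $\delta_0$ in a first-order expansion to force $y_t<x_{i_{t_1}}(t)\le m_t$ just before $t_1$, contradicting the definition of the infimum. Your formulation via $\phi(t)=y_t-x_{i^*}(t)$ is in fact a cleaner rendering of the paper's explicit inequality chain (whose middle inequality contains a sign typo, though the intended argument is the same).
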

\begin{proof}
Let $t_1=\inf\{t>0, y_t\leq m_t\}$. By continuity of $m_t$ and $y_t$, $t_1>0$. Suppose $t_1$ is finite, then $y_{t_1}=m_{t_1}$.  Therefore 
\[
\frac{d}{dt}x_{i_{t_1}}(t_1)\leq g(x_{i_{t_1}}(t_1),t_1)=g(y_{t_1},t_1)=\frac{d}{dt} y(t_1)-\delta_0.
\]
This indicates that for sufficiently small $\delta>0$,
\[
x_{i_{t_1}}(t_1-\delta)> x_{i_{t_1}}(t_1)-\delta g(x_{i_{t_1}}(t),t_1)-\frac12  \delta\delta_0>y(t_1)-\delta g(y(t_1),t_1)+\frac12 \delta\delta_0>y(t_{1}-\delta). 
\]
This contradicts with the definition of $t_1$. Therefore $t_1=\infty$. 
\end{proof}

\begin{lem}
\label{lem:ric}
Suppose $a>0$ and $\sigma\geq 0, $ the solution to the Riccati equation
\[
\dot{y}=-ay^2-\frac{b}{t+1}y+\frac{\sigma}{(t+1)^2},\quad ,
\]
is given by 
\[
y_t=\frac{c_-(t+1)^{-c_-}+Bc_+(t+1)^{-c_+}}{-a[(t+1)^{1-c_-}+B(t+1)^{1-c_+}]}
\]
where $c_-<c_+$ are the roots to the equation
\[
c^2+c-bc=a\sigma,
\]
and $B$ is a constant so that the initial condition holds. In particular 
\begin{enumerate}[1)]
\item If $\sigma>0$,  $c_-<0<c_+$, so as $t\to \infty$, $y_t\to \frac{c_-}{-a(t+1)}$. And for any $\delta>0$, there is a $t_0$ so that if $t\geq t_0$, $y_t\leq \frac{c_-}{-a(t+1)(1-\delta)}$. 
\item If $\sigma=0,b=0$, $c_-=-1,c_+=0$. 
\end{enumerate}
\end{lem}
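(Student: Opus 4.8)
The plan is to linearise the Riccati equation into a second--order Euler equation, read off the exponents from the indicial polynomial, and then recover the stated closed form together with its two special--case asymptotics.

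First I would introduce the logarithmic--derivative substitution $y_t = \dot u_t/(a\,u_t)$. Differentiating and inserting the Riccati equation, the quadratic term cancels and one is left with the linear equation
\[
\ddot u + \frac{b}{t+1}\dot u - \frac{a\sigma}{(t+1)^2}u = 0,
\]
which is of Euler (equidimensional) type in the variable $t+1$. Trying $u = (t+1)^r$ produces the indicial equation $r(r-1) + br - a\sigma = 0$, i.e. $r^2 + (b-1)r - a\sigma = 0$. Its roots are exactly $r = -c$, where $c$ solves $c^2 + (1-b)c - a\sigma = 0$; this is the equation $c^2 + c - bc = a\sigma$ in the statement, so the two exponents are $-c_-$ and $-c_+$.

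Next I would assemble the general solution $u = A_1(t+1)^{-c_-} + A_2(t+1)^{-c_+}$ and compute $y = \dot u/(a u)$. Since $\dot u = -(t+1)^{-1}\big(c_-A_1(t+1)^{-c_-} + c_+A_2(t+1)^{-c_+}\big)$, writing $B = A_2/A_1$ gives precisely
\[
y_t = \frac{c_-(t+1)^{-c_-} + Bc_+(t+1)^{-c_+}}{-a\big[(t+1)^{1-c_-} + B(t+1)^{1-c_+}\big]},
\]
with $B$ the single free constant fixed by $y_0$. The passage from the two--parameter linear solution to the one--parameter Riccati family is exactly the quotient by the overall scale of $u$, so no solution is lost; the only care here is the sign and constant bookkeeping, which is routine.

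Finally I would extract the two special cases from the roots and the explicit formula. Because the product of the roots of $c^2 + (1-b)c - a\sigma = 0$ equals $-a\sigma$, the case $\sigma > 0$ forces $c_-<0<c_+$; since $c_- < c_+$, the term $(t+1)^{-c_-}$ dominates as $t\to\infty$ in both numerator and denominator, giving $y_t \sim c_-/\big(-a(t+1)\big)$, which is the claimed limit. As $c_-/(-a) > 0$ and $1-\delta < 1$, the convergence $(t+1)y_t \to c_-/(-a)$ yields $(t+1)y_t \le c_-/\big(-a(1-\delta)\big)$ for all $t$ beyond some $t_0$, i.e. the stated one--sided bound used in Theorem \ref{thm:max}. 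For $\sigma = 0,\ b = 0$ the indicial equation reduces to $c^2 + c = 0$, with roots $c_- = -1$ and $c_+ = 0$. I expect the main obstacle to be not the algebra but this asymptotic/sign step: establishing $c_- < 0 < c_+$ and checking that the dominant coefficients do not vanish, so that the leading--order cancellation producing the $O(1/t)$ limit is genuine, since it is exactly this limit that the later covariance bounds rely on.
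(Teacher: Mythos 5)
Your proof is correct, but it takes a genuinely different route from the paper's. The paper proves the lemma by pure verification: it differentiates the stated closed form $y_t$, computes $a y_t^2$ (and $b y_t/(t+1)$) explicitly, and checks that the Riccati equation is satisfied, leaving the claims in parts 1) and 2) about the signs of $c_\pm$ and the $t\to\infty$ behavior essentially to the reader. You instead derive the formula via the classical Riccati linearization $y_t=\dot u_t/(a u_t)$, which reduces the equation to the Euler equation $\ddot u+\tfrac{b}{t+1}\dot u-\tfrac{a\sigma}{(t+1)^2}u=0$ with indicial polynomial $r^2+(b-1)r-a\sigma$ and exponents $r=-c_\pm$; taking the quotient $\dot u/(au)$ with $B=A_2/A_1$ reproduces the stated formula exactly. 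Your route buys two things the paper's check does not: it explains where the formula comes from, and it makes the two particular cases transparent --- the sign pattern $c_-<0<c_+$ follows from Vieta (the product of the roots is $-a\sigma<0$), and the limit $(t+1)y_t\to c_-/(-a)$ follows from a dominant-balance argument in which you correctly note that the leading coefficients ($c_-\neq 0$ in the numerator, the coefficient $1$ of $(t+1)^{1-c_-}$ in the denominator) do not vanish. This is precisely the content that Theorem \ref{thm:max} consumes, so your proof is in that respect more complete than the paper's.

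One small inaccuracy in your write-up: the claim that ``no solution is lost'' in passing from the two-parameter linear family to the one-parameter formula is not literally true. The solution with $A_1=0$, namely $y_t=c_+/\bigl(-a(t+1)\bigr)$, corresponds to $B=\infty$ and cannot be represented with finite $B$; equivalently, solving the initial condition gives $B=-(c_-+ay_0)/(c_++ay_0)$, which exists only when $y_0\neq c_+/(-a)$. This degenerate case is immaterial for the application (in Theorem \ref{thm:max} one has $y_0=\|C^{uu}(0)\|_{\max}\geq 0 > c_+/(-a)$ since $c_+>0$), and the paper's own statement carries the same implicit restriction, so this does not affect correctness of the lemma as used.
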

\begin{proof}
We verify that the ODE holds with our solution
\begin{align*}
\dot{y}_t=&\frac{(c^2_-(t+1)^{-c_--1}+Bc^2_+(t+1)^{-c_+-1})((t+1)^{1-c_-}+B(t+1)^{1-c_+})}{a[(t+1)^{1-c_-}+B(t+1)^{1-c_+}]^2}\\
&+\frac{(c_-(t+1)^{-c_-}+Bc_+(t+1)^{-c_+})((1-c_-)(t+1)^{-c_-}+(1-c_+)B(t+1)^{-c_+})}{a[(t+1)^{1-c_-}+B(t+1)^{1-c_+}]^2}\\
&=\frac{c_-(t+1)^{-2c_-}+B((c_--c_+)^2+c_-+c_+)(t+1)^{-c_--c_+}+B^2c^2_+(t+1)^{-2c_+}}{a[(t+1)^{1-c_-}+B(t+1)^{1-c_+}]^2}
\end{align*}
\begin{align*}
ay^2_t=&\frac{c^2_-(t+1)^{-2c_-}+B^2c^2_+(t+1)^{-2c_+}+2Bc_-c_+(t+1)^{-c_--c_+}}{a[(t+1)^{1-c_-}+B(t+1)^{1-c_+}]^2}
\end{align*}
\[
\frac{by_t}{t+1}=\frac{-bc_-(t+1)^{-2c_-}-B^2bc_+(t+1)^{-2c_+}-(c_-+c_+)Bb (t+1)^{-c_--c_+}}{a[(t+1)^{1-c_-}+B(t+1)^{1-c_+}]}
\]
Therefore 
\begin{align*}
\dot{y}_t+ay_t^2=&\frac{a\sigma  [(t+1)^{-2c_-}+2B(t+1)^{-c_--c_+}+B^2(t+1)^{-2c_+}]}{a[(t+1)^{1-c_-}+B(t+1)^{1-c_+}]^2}=\frac{a\sigma}{(t+1)^2}. 
\end{align*}

\end{proof}

\begin{lem}
\label{lem:Gronwall}
Suppose the following holds 
\[
\frac{d}{dt}x_t\leq -a_t x_t+b_t,
\]
where 
\[
a_t\geq 1_{t\geq t_0}\frac{\alpha}{1+t}-1_{t< t_0}\beta,\quad b_t\leq \frac{M}{(1+t)^{1+\gamma}},\quad t\geq t_0
\]
Then 
\[
x_t\lesssim \frac{x_0}{(t+1)^{\alpha}}+\frac{M(\log (t+1))^{1_{\gamma=\alpha}}}{(t+1)^{\min\{\gamma,\alpha\}}}.
\]
\end{lem}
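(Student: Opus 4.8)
The plan is to handle the differential inequality by the standard integrating-factor (Grönwall) argument and then carry out an explicit case analysis on the resulting Duhamel integral. First I would introduce the integrating factor $\mu_t = \exp\!\big(\int_0^t a_s\,ds\big)$. Since $\mu_t>0$, the hypothesis $\frac{d}{dt}x_t \le -a_t x_t + b_t$ rearranges to $\frac{d}{dt}(\mu_t x_t)\le \mu_t b_t$; integrating from $0$ to $t$ and using $\mu_0=1$ yields
\[
x_t \le \frac{x_0}{\mu_t} + \int_0^t \exp\!\Big(-\int_s^t a_r\,dr\Big)\,b_s\,ds .
\]
Everything then reduces to estimating the exponential weight $\exp(-\int_s^t a_r\,dr)$ and integrating it against $b_s$.

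Next I would bound the weight. For $t_0\le s\le t$ the lower bound $a_r\ge \frac{\alpha}{1+r}$ gives $\int_s^t a_r\,dr \ge \alpha\log\frac{1+t}{1+s}$, so that $\exp(-\int_s^t a_r\,dr)\le \big(\tfrac{1+s}{1+t}\big)^{\alpha}$. For the pre-$t_0$ region I would use $a_r\ge -\beta$ on $[0,t_0]$, which costs only the multiplicative constant $e^{\beta t_0}(1+t_0)^{\alpha}$, finite because $t_0,\beta,\alpha$ are fixed. This simultaneously controls the first term, $\frac{x_0}{\mu_t}\lesssim \frac{x_0}{(1+t)^{\alpha}}$, and shows that the contribution of the range $[0,t_0]$ to the Duhamel integral is at most a constant times $(1+t)^{-\alpha}$, hence dominated by the $(1+t)^{-\min\{\gamma,\alpha\}}$ term in the claim and absorbable.

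The heart of the argument is the tail integral. Inserting $b_s\le M(1+s)^{-1-\gamma}$ together with the weight bound gives
\[
\int_{t_0}^t \exp\!\Big(-\int_s^t a_r\,dr\Big)\,b_s\,ds \le \frac{M}{(1+t)^{\alpha}}\int_{t_0}^t (1+s)^{\alpha-1-\gamma}\,ds .
\]
I would then evaluate $\int_{t_0}^t (1+s)^{\alpha-1-\gamma}\,ds$ in the three regimes dictated by the sign of $\alpha-\gamma$: when $\alpha>\gamma$ the integral grows like $(1+t)^{\alpha-\gamma}$, producing the rate $(1+t)^{-\gamma}$; when $\alpha<\gamma$ the integral converges and the prefactor $(1+t)^{-\alpha}$ survives; and in the borderline case $\alpha=\gamma$ the integrand is exactly $(1+s)^{-1}$, giving the logarithmic factor $\log(1+t)$. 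These three outcomes combine precisely into $M(\log(1+t))^{1_{\gamma=\alpha}}(1+t)^{-\min\{\gamma,\alpha\}}$, and adding the first-term bound completes the estimate.

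The main obstacle is bookkeeping rather than conceptual: one must check that every constant generated is independent of $t$ — the factor $e^{\beta t_0}(1+t_0)^{\alpha}$ coming from the negative drift on $[0,t_0]$, the factors $|\alpha-\gamma|^{-1}$ and $(1+t_0)^{\alpha-\gamma}$ from the integral, and the $\int_0^{t_0} b_s\,ds$ contribution — so that they are legitimately hidden in the $\lesssim$ notation, and one must match the borderline case $\alpha=\gamma$ correctly to the indicator $1_{\gamma=\alpha}$ in the statement.
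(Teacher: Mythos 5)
Your proposal is correct and takes essentially the same route as the paper's proof: a Gronwall/Duhamel estimate with the weight bound $\exp\bigl(-\int_s^t a_r\,dr\bigr)\le \bigl(\tfrac{1+s}{1+t}\bigr)^{\alpha}$ for $s\ge t_0$, the factor $e^{\beta t_0}$ absorbing the interval $[0,t_0]$, and a case analysis on $\int_{t_0}^t (1+s)^{\alpha-1-\gamma}\,ds$. If anything, yours is slightly more complete, since the paper writes out only the cases $\gamma>\alpha$ and $\gamma<\alpha$ and omits the borderline case $\gamma=\alpha$ producing the $\log(1+t)$ factor, which you handle explicitly.
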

\begin{proof}
By Gronwall's inequality 
\[
x_t\leq \exp(-\int^t_s a_r dr) x_s +\int^t_s \exp(-\int^t_u a_r dr) b_udu
\]
Apply this with $t=t_0, s=0$, we find that 
\[
x_{t_0}\leq  e^{\beta t_0} x_0+\int^{t_0}_0 e^{\beta (t_0-s)}M ds\leq e^{\beta t_0}(x_0+M/\beta).
\]
Apply the same formula to $t=t, s=t_0$, note that 
\[
\exp(-\int^t_u a_r dr)\leq \exp(-\alpha\log \tfrac{t+1}{u+1})=\left(\frac{u+1}{t+1}\right)^{\alpha}
\]
we find
\[
x_t\leq \left(\frac{t_0+1}{t+1}\right)^{\alpha}x_{t_0} +M\int^t_{t_0} \frac{(u+1)^{\alpha-1-\gamma}}{(t+1)^{\alpha}}  du
\]
When $\gamma>\alpha$, we find that 
\[
x_t\leq \left(\frac{t_0+1}{t+1}\right)^{\alpha}x_{t_0} +\frac{M}{(\gamma-\alpha)(t+1)^{\alpha}(t_0+1)^{\gamma-\alpha}} \lesssim  \frac{x_0+M}{(t+1)^{\alpha}}.
\]
When $\gamma<\alpha$, we find that 
\[
x_t\leq \left(\frac{t_0+1}{t+1}\right)^{\alpha}x_{t_0} +\frac{M(t+1)^{\alpha-\gamma}}{(\alpha-\gamma)(t+1)^{\alpha}} \lesssim  \frac{x_0+M}{(t+1)^{\gamma}}.
\]

\end{proof}
The next argument can also be found in \cite{de2020analysis}
\begin{lem}
\label{lem:vexist}
Suppose we have a symmetric matrix $\phi\in \reals^{d\times d}$ such that 
\begin{enumerate}
\item $\phi_{i,j}\geq 0$ for all $i,j$,
\item $\phi_{i,i}=0$ for all $i$,
\item there is a $\phi_0>0$ such that $\phi_0\leq 1-\sum_{j=1}^d\phi_{i,j}$.
\end{enumerate}
Then if we let $T$ be a random variable of geometric-$\phi_0$ distribution, that is 
\[
P(T=n)=(1-\phi_0)\phi_0^{n-1},\quad n=1,2,\ldots.
\]
Consider a Markov chain $X_t$ on the points $\{1,\ldots, d\}$. Its transition probability is given by 
\[
P(X_{t+1}=j|X_t=i)=\begin{cases}\frac1{1-\phi_0}\phi_{i,j}\quad &j\neq i\\
1-\frac1{1-\phi_0}\sum_{j\neq i}\phi_{i,j}\quad &j=i.
\end{cases}
\]
Fix an index $i\in \{1,\ldots, d\}$. Define a vector $v^{i}$, where its components are given by 
\[
v^{i}_j=\E\left(\sum^T _{k=1} \mathbf{1}_{X_k=i}\bigg|X_1=j\right). 
\]
Then $v^{i}$ satisfies the following properties
\begin{enumerate}[1)]
\item $v^{i}_j\geq 0, \forall j$ and in specific $v^i_i\geq \phi_0$.
\item For all index $j$, $\sum_{l\neq j} \phi_{j,l}v^i_l\leq v^i_j$.
\item $\sum_{j=1}^{d} v^i_j\leq 1$. 
 \end{enumerate}
\end{lem}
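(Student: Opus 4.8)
The plan is to realize $v^i$ as a resolvent (discrete Green's function) of the chain and to extract all three properties from a single first-step (renewal) identity. It is convenient to separate the raw expected occupation $w^i_j:=\E\big[\sum_{k=1}^T \mathbf{1}_{X_k=i}\mid X_1=j\big]$ from the normalized vector $v^i:=\phi_0\, w^i$ that carries the constants in the statement. Since the geometric stopping time $T$ is independent of $\{X_t\}$ and memoryless, conditioning on the first step and on whether the chain stops immediately or continues (the per-step continuation probability is $1-\phi_0$) yields the renewal identity
\[
w^i_j=\mathbf{1}_{j=i}+(1-\phi_0)\sum_{l}\Pi_{j,l}\,w^i_l,
\]
where $\Pi$ is the transition matrix; equivalently $(I-M)w^i=e_i$ with $M:=(1-\phi_0)\Pi$, so $w^i$ is the $i$-th column of $(I-M)^{-1}=\sum_{m\ge0}M^m$. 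The linchpin of the whole argument is the observation that the off-diagonal entries of $M$ are \emph{exactly} $\phi_{j,l}$ (this is precisely why the factor $1/(1-\phi_0)$ is built into $\Pi$), while its diagonal is $M_{j,j}=(1-\phi_0)-s_j$, where $s_j:=\sum_{l\neq j}\phi_{j,l}$.

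Property 1 is then immediate: each $w^i_j$ is the expectation of a nonnegative count, hence $\ge 0$; and because $T\ge 1$ always while $X_1=i$ forces the $k=1$ term to equal $1$, we get $w^i_i\ge 1$, so $v^i_i=\phi_0 w^i_i\ge \phi_0$. Property 2 is the crux, and the point I would stress is that a crude bound (discarding the diagonal of $\Pi$) is \emph{not} enough; it only yields the inequality when $\phi_0\ge\tfrac12$. Instead I would use the renewal identity as an exact equality at coordinate $j$:
\[
\sum_{l\neq j}\phi_{j,l}\,w^i_l=(1-M_{j,j})w^i_j-\mathbf{1}_{j=i}=(\phi_0+s_j)w^i_j-\mathbf{1}_{j=i}\le(\phi_0+s_j)\,w^i_j,
\]
and the hypothesis $s_j\le 1-\phi_0$ forces $\phi_0+s_j\le 1$, giving $\sum_{l\neq j}\phi_{j,l}w^i_l\le w^i_j$. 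Since this inequality is homogeneous it passes unchanged to $v^i=\phi_0 w^i$.

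For Property 3 I would pass from a single resolvent column to a global occupation-time identity. Because $\phi$ is symmetric, so is $\Pi$ (hence $M$), and every row and column sum of $M$ equals $M_{j,j}+s_j=1-\phi_0$, i.e.\ $M\mathbf{1}=(1-\phi_0)\mathbf{1}$ for the all-ones vector $\mathbf{1}$. Thus $(I-M)^{-1}\mathbf{1}=\tfrac{1}{\phi_0}\mathbf{1}$, and by symmetry each column of $(I-M)^{-1}$ also sums to $1/\phi_0$; equivalently, summing the expected visits over all target states equals the expected total number of steps, $\sum_j w^i_j=\E[T]=1/\phi_0$. Normalizing by $\phi_0$ therefore makes $\sum_j v^i_j=1$, which is exactly the constant needed downstream so that $L_k(0)=\sum_j v^k_j\big(l_j(u)-l_j(u^*)\big)\le\max_j\{l_j(\ubar(0))-l_j(u^*)\}$.

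The main obstacle I anticipate is not any single computation but getting the bookkeeping of constants to line up simultaneously: the $1/(1-\phi_0)$ factor in $\Pi$, the continuation probability of $T$, and the $\phi_0$-normalization of $v^i$ must be matched so that (a) the off-diagonal cancellation $M_{j,l}=\phi_{j,l}$ holds, which is what drives Property 2 past the $\phi_0\ge\tfrac12$ barrier, and (b) the occupation-time sum lands exactly at $1$ for Property 3. Once the renewal identity and this cancellation are secured, the three properties follow in a few lines each.
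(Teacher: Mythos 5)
Your argument is correct and is, at bottom, the same one the paper uses --- a first-step (renewal) analysis of the geometrically stopped chain --- recast as the resolvent identity $(I-M)w^i=e_i$ with $M=(1-\phi_0)\Pi$ and the cancellation $M_{j,l}=\phi_{j,l}$ for $l\neq j$. Two points of comparison are worth recording. First, you explicitly repair an inconsistency that the paper passes over in silence: with the distribution as literally written, $P(T=n)=(1-\phi_0)\phi_0^{n-1}$, the per-step continuation probability is $\phi_0$ (not $1-\phi_0$), and the raw occupation vector defined in the lemma then sums to $\E[T]=1/(1-\phi_0)>1$, so conclusion 3) is false as stated. Your two corrections --- taking the continuation probability to be $1-\phi_0$, i.e.\ $P(T=n)=\phi_0(1-\phi_0)^{n-1}$, and normalizing $v^i=\phi_0 w^i$ --- are precisely what the paper's own proof does implicitly: its first-step identity \eqref{tmp:vij} carries the source term $\phi_0\mathbf{1}_{j=i}$ and is therefore the identity satisfied by $\phi_0 w^i$, not by the quantity the lemma literally defines. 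Your route also upgrades conclusion 3) to the exact equality $\sum_j v^i_j=1$ via $M\mathbf{1}=(1-\phi_0)\mathbf{1}$ and symmetry, which is cleaner than the paper's chain of inequalities (which, read carefully, also yields equality). Second, your side remark on property 2) is mistaken: the ``crude bound'' does suffice. Writing $s_j=\sum_{l\neq j}\phi_{j,l}$, the renewal identity gives
\[
w^i_j=\mathbf{1}_{j=i}+\bigl((1-\phi_0)-s_j\bigr)w^i_j+\sum_{l\neq j}\phi_{j,l}w^i_l\;\geq\;\sum_{l\neq j}\phi_{j,l}w^i_l,
\]
since both discarded terms are nonnegative (the diagonal one by hypothesis 3 of the lemma). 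This is exactly the paper's proof of conclusion 2), and it works for every $\phi_0\in(0,1)$ thanks to the very cancellation $M_{j,l}=\phi_{j,l}$ that you call the linchpin; the $\phi_0\geq\tfrac12$ obstruction you describe arises only under the literally stated parameterization of $T$, under which your own treatment of properties 2) and 3) would fail as well. This mislabeling does not affect the validity of your proof, which takes the exact-identity route instead, but it wrongly brands the paper's (valid) shortcut as insufficient.
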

\begin{proof}
Since $\sum^T _{k=1} \mathbf{1}_{X_k=i}\geq 0$ a.s., so $v^{i}_j\geq 0$. This also leads to claim 1)
\[
v^i_i=\E\left(\sum^T _{k=1} \mathbf{1}_{X_k=i}\bigg|X_1=i\right)
\geq \E\left(\mathbf{1}_{T=1, X_1=i}\bigg|X_1=i\right)=\phi_0. 
\]
Next, by doing a first step analysis of the Markov chain, we find that 
\begin{equation}
\label{tmp:vij}
v^i_j =\phi_0\cdot \mathbf{1}_{j=i}+(1-\phi_0) \left(1-\frac1{1-\phi_0}\sum_{l\neq j}\phi_{j,l}\right)v^i_j+(1-\phi_0)\cdot \frac1{1-\phi_0}\sum_{l\neq j}\phi_{j,l}v^i_l.
\end{equation}
Since $\sum_{l\neq j}\phi_{j,l}\leq q<1$, we have claim 2) by
\[
v^i_j\geq (1-\phi_0)\cdot \frac1{1-\phi_0}\sum_{l\neq j}\phi_{j,l}v^i_l=\sum_{l\neq j}\phi_{l,j}v^i_l.
\]
Finally we sum \eqref{tmp:vij} over all $j$ and obtain
\begin{align*}
\sum_{j=1}^{d}v^i_j &=\phi_0+(1-\phi_0)\sum_{j=1}^{d}\left(1-\frac1{1-\phi_0}\sum_{l\neq j}\phi_{j,l}\right)v^i_j+\sum_{j=1}^{d}\sum_{l\neq j}\phi_{j,l}v^i_l\\
&\leq \phi_0+\sum_{j=1}^{d}\sum_{l\neq j}\phi_{j,l}v^i_l=\phi_0+ \sum_{l=1}^{d} v_l^i\left(\sum_{j\neq l }\phi_{j,l}\right).
\end{align*}
Therefore we have 
\[
\phi_0\sum_{j=1}^{d}v^i_j \leq \sum_{j=1}^{d}(1-\sum_{j\neq l }\phi_{j,l})v^i_j\leq \phi_0,
\]
which leads to our claim 3). 
\end{proof}

%
%
\bibliographystyle{unsrt}
\bibliography{References}

\end{document}